\def\@strippedMR{}
\def\@scanforMR#1#2#3\endscan{
  \ifx#1M\ifx#2R\def\@strippedMR{#3}
  \else\def\@strippedMR{#1#2#3}
  \fi\fi}
\renewcommand\MR[1]{\relax\ifhmode\unskip\spacefactor3000 \space\fi
  \@scanforMR#1\endscan
  MR\MRhref{\@strippedMR}{\@strippedMR}}
\newtheorem*{Thm*}{Theorem}
\newtheorem{Thm}{Theorem}
\newtheorem{Cor}[Thm]{Corollary}
\newtheorem{Prop}[Thm]{Proposition}
\newtheorem{Lemma}{Lemma}
\theoremstyle{definition}
\newtheorem{Defn}{Definition}
\newtheorem{Remark}{Remark}
\newtheorem{Ex}[Remark]{Example}
\newcommand{\mf}[1]{\mathbb{#1}}
\newcommand{\mc}[1]{\mathcal{#1}}
\newcommand{\mb}[1]{\mathbf{#1}}
\newcommand{\mk}[1]{\mathfrak{#1}}
\DeclareMathOperator{\NC}{\mathit{NC}}
\DeclareMathOperator{\Falg}{\mathcal{F}_{\mathrm{alg}}}
\DeclareMathOperator{\Part}{\mathcal{P}}
\DeclareMathOperator{\Int}{\mathit{Int}}
\newcommand{\norm}[1]{\left\Vert#1\right\Vert}
\newcommand{\abs}[1]{\left\vert#1\right\vert}
\newcommand{\chf}[1]{\mathbf{1}_{#1}}
\newcommand{\set}[1]{\left\{#1\right\}}
\newcommand{\ip}[2]{\left \langle #1, #2 \right \rangle}
\newcommand{\state}[1]{\varphi \left[ #1 \right]}
\newcommand{\State}[1]{\Phi \left[ #1 \right]}
\renewcommand{\phi}{\varphi}
\newcommand{\FreeCum}[1]{R \left[ #1 \right]}
\newcommand{\Cum}[1]{\eta \left[ #1 \right]}
\newcommand{\Alg}[1]{\mathit{Alg} \left( #1 \right)}
\newcommand{\br}{\medskip\noindent}
\title[Appell polynomials II. Boolean theory]{Appell polynomials and their relatives II. Boolean theory}
\author[M.~Anshelevich]{Michael Anshelevich}
\thanks{This work was supported in part by NSF grant DMS-0613195}
\address{Department of Mathematics, Texas A\&M University, College Station, TX 77843-3368}
\email{manshel@math.tamu.edu}
\subjclass[2000]{Primary 46L53; Secondary 46L54, 05E35, 30B70}
\date{\today}
\begin{document}

\begin{abstract}
The Appell-type polynomial family corresponding to the simplest non-commutative derivative operator turns out to be connected with the Boolean probability theory, the simplest of the three universal non-commutative probability theories (the other two being free and tensor/classical probability). The basic properties of the Boolean Appell polynomials are described. In particular, their generating function turns out to have a resolvent-type form, just like the generating function for the \emph{free} Sheffer polynomials. It follows that the Meixner (that is, Sheffer plus orthogonal) polynomial classes, in the Boolean and free theory, coincide. This is true even in the multivariate case. A number of applications of this fact are described, to the Belinschi-Nica and Bercovici-Pata maps, conditional freeness, and the Laha-Lukacs type characterization.

\br
A number of properties which hold for the Meixner class in the free and classical cases turn out to hold in general in the Boolean theory. Examples include the behavior of the Jacobi coefficients under convolution, the relationship between the Jacobi coefficients and cumulants, and an operator model for cumulants. Along the way, we obtain a multivariate version of the Stieltjes continued fraction expansion for the moment generating function of an arbitrary state with monic orthogonal polynomials.
\end{abstract}


\maketitle

\section{Introduction}

\br
In 1880, Paul Appell \cite{Appell} investigated families of polynomials $\set{A_n(x)}$ with the property that
\[
A_n'(x) = n A_{n-1}(x).
\]
This recursion determines each $A_n$ except for the constant term. One way to fix the constant terms \cite{Thorne-Appell-sets} is to require that
\begin{equation}
\label{Centered}
\mu(A_n) = \int A_n(x) \,d\mu(x) = \delta_{n, 0},
\end{equation}
where $\mu$ is a probability measure (a positive measure of total integral $1$), or a (not necessarily positive) linear functional on polynomials such that $\mu(1) = 1$. The three most familiar examples of Appell polynomials are
\begin{enumerate}
\item
$A_n(x) = x^n$, corresponding to delta-measure $\mu = \delta_0$,
\item
$A_n(x) = H_n(x)$, the Hermite polynomials, corresponding to $d\mu(x) = \frac{1}{\sqrt{2 \pi}} e^{-x^2/2} \,dx$, and
\item
$A_n(x) = B_n(x)$, the Bernoulli polynomials, corresponding to $d \mu(x) = \chf{[0,1]} \,dx$.
\end{enumerate}

\br
Besides the importance of these and other specific examples, general Appell polynomials have nice combinatorial properties. They also play a role in probability theory: it is easy to see that the Appell polynomials are martingales for the corresponding L{\'e}vy processes \cite{SchOrthogonal}, but there are other more surprising appearances of these polynomials, such as \cite{Giraitis-Surgailis,Avram-Taqqu} or \cite{Kyprianou-Novikov-Shiryaev}. Here it is crucial that multivariate versions of the Appell polynomials exist (instead of the derivative operator, one starts with a $d$-tuple of partial derivatives), and even a family of more ``functorial'' Appell maps, which in this context are sometimes called \emph{Wick products}.

\br
An equivalent definition of the Appell polynomials corresponding to a functional $\mu$ is via their generating function
\[
\sum_{n=0}^\infty \frac{1}{n!} A_n(x) z^n = \exp \left( x z - \ell(z) \right),
\]
where
\[
\ell(z) = \log \mu \left( e^{x z} \right)
\]
is the logarithm of the exponential moment generating function of $\mu$, and is the generating function for \emph{cumulants} of $\mu$. Allowing for general invertible changes of variable $u(z)$ gives a larger class of Sheffer polynomials, with generating functions of the form
\[
\sum_{n=0}^\infty \frac{1}{n!} P_n(x) z^n = \exp \left( x u(z) - \ell(u(z)) \right),
\]
which are the main objects in Rota's umbral calculus \cite{RotaFiniteCalculusBook,Roman-Book}. Restricting now to positive measures $\mu$, \emph{Meixner} polynomials are Sheffer polynomials which are also orthogonal, and Meixner distributions are the measures whose orthogonal polynomials are Meixner. These measures are known \cite{Meixner}, and consist of the normal, Poisson, Gamma, Binomial, negative binomial, and hyperbolic secant distributions. Again, much of the analysis can be done in the multivariate case. In that case, the description of the Meixner class is not complete, but there are partial classification results due to Letac, Casalic, Feinsilver, Pommeret, and others.

\br
In Part I \cite{AnsAppell}, I introduced the \emph{free Appell polynomials}. Here (in the single variable case) the starting point is the difference quotient operator
\[
(\partial f)(x,y) = \frac{f(x) - f(y)}{x - y},
\]
and the polynomials are defined recursively via
\[
(\partial A_n)(x, y) = \sum_{k=0}^{n-1} A_k(x) A_{n-k-1} (y)
\]
and the centering condition~\eqref{Centered}. Note that
\[
(\partial x^n) = \sum_{k=0}^{n-1} x^k y^{n-k-1},
\]
so $\set{x^n}$ are also free Appell. Chebyshev polynomials of the second kind are another example. These polynomials have been considered previously under the name of ``sequences of interpolatroy type'' \cite{Verde-Star}. Free Appell polynomials also turn out to have nice, similar but different, combinatorial properties, and probabilistic connections. This time, however, the connection is to \emph{free} probability theory. This is a non-commutative probability theory, initiated by Voiculescu in the early 1980s, whose main objects, rather than commuting random variables, are non-commuting operators (see the next section). The theory is by now quite broad and has a number of results which parallel theorems in the usual probability theory, as well as a number of applications and connections to other fields, notably to the theories of operator algebras and random matrices. The connection of free Appell polynomials to free probability theory is not unexpected, since the importance of the difference quotient was discovered by Voiculescu in his work on free entropy \cite{Voi-Entropy5,VoiCoalgebra}.

\br
Again, one can define multivariate free Appell polynomials, except in this case it is natural to consider polynomials in multiple \emph{non-commuting} variables. The extension of the class to free Sheffer, and the restriction to the free Meixner class was carried out in \cite{AnsMulti-Sheffer,AnsFree-Meixner}, and some of these results are summarized in Section~\ref{Section:Meinxer}. They include conditions on the free cumulant generating function of a free Meixner state $\phi$ (a multivariate version of the measure $\mu$) and an operator model for it.

\br
In this paper, we consider the program described above where the starting point is yet another derivative operator
\[
(D f)(x) = \frac{f(x) - f(0)}{x}.
\]
In other words, we define the \emph{Boolean} Appell polynomials by the recursion
\[
D A_n(x) = A_{n-1}(x)
\]
(note that $D x^n = x^{n-1}$) and condition~\eqref{Centered}. $D$ is the $q=0$ version of the $q$-derivative
\[
D_q f(x) = \frac{f(x) - f(q x)}{(1-q) x},
\]
and so the Boolean Appell polynomials in a single variable are a particular case of the $q$-Appell polynomials of \cite{Al-Salam-q-Appell}.

\br
The probabilistic connection of the Boolean Appell polynomials is to a different non-commutative probability theory, the so-called Boolean probability. The first examples from this theory date from the 1970s \cite{vWa73,Bozejko-Riesz-product,Bozejko-Free-groups}. The Boolean theory is much simpler than free probability theory, and at this point lacks its depth, primarily because of the lack of the random matrix techniques. Nevertheless, it has a number of crucial properties since it comes from a universal product; in fact, by a theorem of Speicher~\cite{SpeUniv} (see also \cite{Ben-Ghorbal-Independence,Muraki-Natural-products}), Boolean, free, and classical theories are exactly the only ones which arise from universal products (respectively, Boolean, free, and tensor) which do not depend on the order of the components. A sample of work on Boolean probability theory includes \cite{SW97,Privault-Boolean,Oravecz-Fermi,Franz-Unification,Krystek-Yoshida-t,Mlotkowki-Lambda-Boolean,
Lenczewski-Noncommutative-independence,Stoica-Boolean,Bercovici-Boolean}.

\br
We start the paper by defining multivariate Boolean Appell polynomials and describing their generating functions, recursion relations, and other basic properties, which are analogous to the classical and the free case. Then we use the generating function form to define the Boolean Sheffer class. Here the analogy breaks down and a new phenomenon appears. Namely, while the free and Boolean Appell classes differ, the free and Boolean Sheffer classes happen to coincide, even the the multivariate case. It follows that the free and Boolean Meixner classes (Sheffer $+$ Orthogonal) coincide as well. So for a free / Boolean Meixner state $\phi$, in addition to conditions on its free cumulant generating function, we get similar conditions on its Boolean cumulant generating function. One consequence of these relations is that the free / Boolean Meixner class is preserved under the transformation $\mf{B}_t$ from \cite{Belinschi-Nica-B_t,Belinschi-Nica-Free-BM}. Another is the explanation of some limit theorems from the theory of conditional freeness. A third is the observation that the Boolean-to-free version of the Bercovici-Pata bijection \cite{BerPatDomains,Belinschi-Nica-Eta} takes the free Meixner class to itself, and has a simple explicit form on it.

\br
Laha and Lukacs \cite{Laha-Lukacs} characterized all the (classical) Meixner distributions using a quadratic regression property, and Bo{\.z}ejko and Bryc \cite{Boz-Bryc} proved the corresponding free version. We prove a Boolean version of this property, which however holds not for the full free / Boolean Meixner class, but only for a smaller class of Bernoulli distributions.

\br
Another place where the analogy between classical and free theories on one hand, and Boolean on the other, breaks down is in the relation between cumulants, convolution, and the Jacobi parameters. If $\set{\mu_t}$  is a convolution semigroup of Meixner distributions, and $\set{P_n(x,t)}$ are the corresponding orthogonal polynomials, they satisfy recursion relations of the type
\[
x P_n(x,t) = P_{n+1}(x,t) + (t + n b) P_n(x,t) + n (t + (n-1)c) P_{n-1}(x,t).
\]
Similarly, if instead $\set{\mu_t}$ are a free convolution semigroup of free Meixner distributions, the corresponding orthogonal polynomials satisfy
\begin{align*}
x P_0(x,t) & = P_1(x,t) + t P_0(x,t), \\
x P_1(x,t) & = P_2(x,t) + (t + b) P_1(x,t) + t P_0(x,t), \\
x P_n(x,t) & = P_{n+1}(x,t) + (t + b) P_n(x,t) + (t + c) P_{n-1}(x,t).
\end{align*}
In both cases, the Jacobi coefficients are linear in $t$. This is certainly not the case for general (classical or free) convolution semigroups. Similarly, there is not in general a simple relation between the (classical or free) cumulants of a measure and the Jacobi parameters of its orthogonal polynomials.

\br
In contrast, in the Boolean case these special properties of the Meixner class actually hold for all measures: Jacobi coefficients are linear in the Boolean convolution parameter, and Boolean cumulants have a simple expression in terms of the Jacobi parameters. The single-variable versions of these statements are known \cite{Boz-Wys,Lehner-Cumulants-lattice}, and we show that they hold for arbitrary states which have monic orthogonal polynomials. Along the way, we construct a multivariate continued fraction expansion for a moment generating function of any such state, a result which is of independent interest.

\br
The paper is organized as follows. Section~\ref{Section:Preliminaries} sets out the notation and background results. Section~\ref{Section:Appell} describes the Boolean Appell polynomials. Section~\ref{Section:Meinxer} describes the coincidence of the Boolean and free Meixner classes. Finally, the general results about the continued fraction expansion and the operator representation of Boolean cumulants are contained in the appendix.

\br
\textbf{Acknowledgements.}
I thank Andu Nica, Serban Belinschi, and Wlodek Bryc for useful and enjoyable conversations.

\section{Preliminaries}
\label{Section:Preliminaries}

\br
Variables in this paper will typically come in $d$-tuples, which will be denoted using the bold font: $\mb{x} = (x_1, x_2, \ldots, x_d)$, and the same for $\mb{z}, \mb{S}$, etc.

\subsection{Operations on power series}

Let $\mb{z} = (z_1, \ldots, z_d)$ be non-commuting indeterminates. For a non-commutative power series $G$ in $\mb{z}$ and $i = 1, \ldots, d$, define the left non-commutative partial derivative $D_i G$ by a linear extension of $D_i(1) = 0$,
\[
D_i z_{\vec{u}} = \delta_{i u(1)} z_{u(2)} \ldots z_{u(n)}.
\]
Denote by $\mb{D} G = (D_1 G, \ldots, D_d G)$ the left non-commutative gradient.

\br
For a non-commutative power series $G$, denote by $G^{-1}$ its inverse with respect to multiplication. For a $d$-tuple of non-commutative power series $\mb{G} = (G_1, \ldots, G_d)$, denote by $\mb{G}^{\langle -1 \rangle}$ its inverse with respect to composition (which is also a $d$-tuple).

\subsection{Polynomials}

Let $\mf{C}\langle \mb{x} \rangle = \mf{C}\langle x_1, x_2, \ldots, x_d \rangle$ be all the polynomials with complex coefficients in $d$ non-commuting variables. \emph{Multi-indices} are elements $\vec{u} \in \set{1, \ldots, d}^k$ for $k \geq 0$; for $\abs{\vec{u}} = 0$ denote $\vec{u}$ by $\emptyset$. Monomials in non-commuting variables $(x_1, \ldots, x_d)$ are indexed by such multi-indices:
\[
x_{\vec{u}} = x_{u(1)} \ldots x_{u(k)}.
\]
Note that our use of the term ``multi-index'' is different from the usual one, which is more suited for indexing monomials in commuting variables.

\br
For two multi-indices $\vec{u}, \vec{v}$, denote by $(\vec{u}, \vec{v})$ their concatenation. For $\vec{u}$ with $\abs{\vec{u}} = k$, denote
\[
(\vec{u})^{op} = (u(k), \ldots, u(2), u(1)).
\]
Define an involution on $\mf{C}\langle \mb{x} \rangle$ via the linear extension of
\[
(\alpha x_{\vec{u}})^\ast = \bar{\alpha} x_{(\vec{u})^{op}},
\]
so that each $x_i$ is self-adjoint.

\br
A \emph{monic polynomial family} in $\mb{x}$ is a family $\set{P_{\vec{u}}(\mb{x})}$ indexed by all multi-indices
\[
\bigcup_{k=1}^\infty \set{\vec{u} \in \set{1, \ldots, d}^k}
\]
(with $P_{\emptyset} = 1$ being understood) such that\[
P_{\vec{u}}(\mb{x}) = x_{\vec{u}} + \textsl{lower-order terms}.
\]
Note that $P_{\vec{u}}^\ast \neq P_{(\vec{u})^{op}}$ in general.

\subsection{Algebras and states}

Let $\mc{A}$ be a complex $\ast$-algebra. Denote by
\[
\mc{A}^{sa} = \set{X \in \mc{A}: X = X^\ast}
\]
its self-adjoint part. A state $\Phi: \mc{A} \rightarrow \mf{C}$ is a linear functional which is unital, that is $\State{1} = 1$ if $\mc{A}$ has a unit, compatible with the $\ast$-operation so that for any $X \in \mc{A}$,
\[
\State{X^\ast} = \overline{\State{X}},
\]
and positive, that is for any $X \in \mc{A}$
\[
\State{X^\ast X} \geq 0.
\]
We will think of the pair $(\mc{A}, \Phi)$ as a non-commutative probability space, and refer to its elements as (non-commutative) random variables.

\br
A state $\Phi$ induces the pre-inner product
\[
\ip{X}{Y} = \State{X^\ast Y} = \overline{\ip{Y}{X}}
\]
and the seminorm
\[
\norm{X} = \sqrt{\State{X^\ast X}}.
\]
We will also denote
\[
\norm{X}_{\infty} = \sup_{Y \in \mc{A}} \frac{\norm{X Y}}{\norm{Y}} = \sup_{Y \in \mc{A}} \sqrt{\frac{\State{Y^\ast (X^\ast X) Y}}{\State{Y^\ast Y}}}.
\]

\br
Most of the time we will be working with states on $\mf{C} \langle \mb{x} \rangle$ arising as joint distributions. For
\[
X_1, X_2, \ldots, X_d \in \mc{A}^{sa},
\]
their joint distribution with respect to $\Phi$ is a state on $\mf{C} \langle \mb{x} \rangle$ determined by
\[
\state{P(x_1, x_2, \ldots, x_d)} = \State{P(X_1, X_2, \ldots, X_d)}
\]
The numbers $\state{x_{\vec{u}}}$ are called the \emph{moments} of $\phi$. More generally, for $d$ non-commuting indeterminates $\mb{z} = (z_1, \ldots, z_d)$, the series
\[
M(\mb{z}) = \sum_{\vec{u}} \state{x_{\vec{u}}} z_{\vec{u}}
\]
is the moment generating function of $\phi$. In the remainder of the paper, except for the appendix, we will assume that under the state $\phi$, the variables have zero mean and identity covariance,
\[
\state{x_i} = 0, \qquad \state{x_i x_j} = \delta_{ij}.
\]

\br
A state $\phi$ on $\mf{C} \langle \mb{x} \rangle$ has a \emph{monic orthogonal polynomial system}, or MOPS, if for any multi-index $\vec{u}$, there is a monic polynomial $P_{\vec{u}}$ with leading term $x_{\vec{u}}$, such that these polynomials are orthogonal with respect to $\phi$, that is,
\[
\ip{P_{\vec{u}}}{P_{\vec{v}}}_\phi = 0
\]
for $\vec{u} \neq \vec{v}$.

\subsection{Partitions}

A \emph{partition} $\pi$ of a set $V \subset \mf{Z}$ is a collection of disjoint subsets of $V$ (classes of $\pi$), $\pi = (B_1, B_2, \ldots, B_k)$, whose union equals $V$. We denote the collection of all partitions by $\Part(V)$. Most of the time we will be interested in partitions $\Part(n)$ of $\set{1, 2, \ldots, n}$. Partitions form a partially ordered set (in fact a lattice) under the operation of refinement, so that the largest partition is $\hat{1} = \bigl( \set{1, 2, \ldots, n} \bigr)$ and the smallest partition is $\hat{0} = \bigl( \set{1}, \set{2}, \ldots, \set{n} \bigr)$. We will use $i \stackrel{\pi}{\sim} j$ to denote that $i, j$ lie in the same class of $\pi$.

\br
Let $\NC(V)$ denote the collection of non-crossing partitions of $V$, which are partitions $\pi$ such that
\[
i \stackrel{\pi}{\sim} i', j \stackrel{\pi}{\sim} j', i \stackrel{\pi}{\not \sim} j, i < j < i' \Rightarrow i < j' < i'.
\]

\br
Let $\Int(V)$ denote the collection of interval partitions of $V$, which are partitions $\pi$ such that
\[
i \stackrel{\pi}{\sim} i',  i < j < i' \Rightarrow i \stackrel{\pi}{\sim} j \stackrel{\pi}{\sim} i'.
\]
For each $n$, let $\NC(n)$ (resp., $\Int(n)$) denote the collection of non-crossing (interval) partitions of the set $\set{1, 2, \ldots, n}$. Both $\NC(V)$ and $\Int(V)$ are sub-lattices of the lattice of all partitions; in fact, they have an additional property of being self-dual. As a lattice, $\Int(n)$ is isomorphic to the lattice of all subsets of a set of $(n-1)$ elements, and for this reason interval partitions are sometimes called Boolean partitions.

\subsection{Cumulants}

The Boolean (respectively, free) cumulant functional $\eta$ (resp., $R$) corresponding to a state $\phi$ is the linear functional on $\mf{C} \langle \mb{x} \rangle$ defined recursively by $\FreeCum{1} = \Cum{1} = 0$ and for $\abs{\vec{u}} = n$,
\begin{equation}
\label{Cumulants-definition}
\Cum{x_{\vec{u}}}
= \state{x_{\vec{u}}} - \sum_{\substack{\pi \in \Int(n), \\ \pi \neq \hat{1}}} \prod_{B \in \pi} \Cum{\prod_{i \in B} x_{u(i)}},
\end{equation}
resp.
\begin{equation}
\label{Free-cumulants-definition}
\FreeCum{x_{\vec{u}}}
= \state{x_{\vec{u}}} - \sum_{\substack{\pi \in \NC(n), \\ \pi \neq \hat{1}}} \prod_{B \in \pi} \FreeCum{\prod_{i \in B} x_{u(i)}},
\end{equation}
which expresses $\Cum{x_{\vec{u}}}$ and $\FreeCum{x_{\vec{u}}}$ in terms of the joint moments and sums of products of lower-order cumulants. From these, we can form the Boolean (resp., free) cumulant generating function $\eta$ (resp., $R$) of $\phi$ via
\begin{equation*}
\eta(z_1, z_2, \ldots, z_d)
= \sum_{n=1}^\infty \sum_{\abs{\vec{u}} = n} \Cum{x_{\vec{u}}} z_{\vec{u}},
\end{equation*}
resp.
\begin{equation*}
R(z_1, z_2, \ldots, z_d)
= \sum_{n=1}^\infty \sum_{\abs{\vec{u}} = n} \FreeCum{x_{\vec{u}}} z_{\vec{u}},
\end{equation*}
where $\mb{z} = (z_1, \ldots, z_d)$ are non-commuting indeterminates. If $M(\mb{z})$ is the moment generating function of $\phi$, then from definitions~\eqref{Cumulants-definition}, \eqref{Free-cumulants-definition} there follow the generating function relations
\begin{equation*}
\eta(\mb{z}) (1 + M(\mb{z})) = M(\mb{z}),
\end{equation*}
which is equivalent to
\begin{equation}
\label{K-M-formula}
\eta(\mb{z}) = 1 - (1 + M(\mb{z}))^{-1},
\end{equation}
and
\begin{equation}
\label{R-M-formula}
\begin{split}
R \Bigl(z_1 (1 + M(\mb{z})), \ldots, z_d (1 + M(\mb{z})) \Bigr) & = M(\mb{z}), \\
R \Bigl((1 + M(\mb{z})) z_1, \ldots, (1 + M(\mb{z})) z_d \Bigr) & = M(\mb{z}),
\end{split}
\end{equation}
see \cite{SW97} and Lecture 16 of \cite{Nica-Speicher-book}. Note that the generating function $\ell$ for the classical cumulants can also be defined in a similar way, using the lattice of all partitions, commuting variables, and an exponential moment generating function.
\begin{Lemma}
\label{Lemma:Derivatives}
\br
\begin{enumerate}
\item
For $F, G$ power series in $\mb{z}$,
\[
D_i (F(\mb{z}) G(\mb{z})) = (D_i F)(\mb{z}) G(\mb{z}) + F(0) D_i G(\mb{z}),
\]
where $F(0)$ is the constant term of $F$.
\item
For $z_i = w_i (1 + M(\mb{z}))$,
\[
D_j R(\mb{z}) = (1 + M(\mb{w}))^{-1} D_j M(\mb{w}).
\]
\item
$D_j \eta(\mb{w}) = - D_j (1 + M(\mb{w}))^{-1} = D_j M(\mb{w}) (1 + M(\mb{w}))^{-1}$.
\end{enumerate}
\end{Lemma}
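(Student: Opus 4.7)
The plan is to prove the three parts in the order (a), (c), (b), since (a) feeds into both of the others.

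Part (a) is a direct check on monomials followed by bilinear extension. For $F = z_{\vec{u}}$ and $G = z_{\vec{v}}$, the product $FG$ equals the single monomial $z_{(\vec{u},\vec{v})}$. If $\vec{u} = \emptyset$, then $F = 1$, $D_i F = 0$, and $F(0) = 1$, so both sides reduce to $D_i G$. If $\vec{u}$ is non-empty, the first letter of the concatenation is $u(1)$, and the definition of $D_i$ gives
\[
D_i(z_{\vec{u}} z_{\vec{v}}) = \delta_{i,u(1)} z_{u(2)} \cdots z_{u(k)} z_{\vec{v}} = (D_i z_{\vec{u}})\, z_{\vec{v}},
\]
while $F(0) = 0$ kills the other term on the right. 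Linearity finishes the argument.

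For part (c), I would first apply $D_j$ to the identity $\eta(\mb{w}) = 1 - (1+M(\mb{w}))^{-1}$ from \eqref{K-M-formula}, using that $D_j$ annihilates constants; this gives the first equality. For the second, I apply part (a) to the relation $(1+M(\mb{w}))(1+M(\mb{w}))^{-1} = 1$. Since $M(0) = 0$, the constant term of $1 + M(\mb{w})$ is $1$, and the product rule yields
\[
0 = D_j M(\mb{w}) \cdot (1+M(\mb{w}))^{-1} + D_j (1+M(\mb{w}))^{-1},
\]
which rearranges to the claim.

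Part (b) is the substantive one. The key tool is the elementary first-letter decomposition
\[
F(\mb{z}) = F(0) + \sum_{i=1}^d z_i (D_i F)(\mb{z}),
\]
valid for any non-commutative power series $F$ and obtained by grouping monomials according to their initial letter. Applied to $R$, whose constant term vanishes, this gives $R(\mb{z}) = \sum_i z_i (D_i R)(\mb{z})$. Now I substitute $z_i = w_i (1 + M(\mb{w}))$: by the first form of \eqref{R-M-formula} the left-hand side becomes $M(\mb{w})$, while the right-hand side becomes $\sum_i w_i (1 + M(\mb{w})) (D_i R)(\mb{z})$. Applying the same decomposition to $M$ itself yields $M(\mb{w}) = \sum_i w_i (D_i M)(\mb{w})$. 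Equating the two expressions for $M(\mb{w})$ and matching the left-$w_i$-linear coefficients forces
\[
(1+M(\mb{w})) (D_i R)(\mb{z}) = (D_i M)(\mb{w})
\]
for each $i$; left-multiplication by $(1+M(\mb{w}))^{-1}$ then finishes part (b).

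The main technical subtlety is the coefficient-matching step in part (b). In the non-commutative setting, two power series $\sum_i w_i A_i(\mb{w})$ and $\sum_i w_i B_i(\mb{w})$ coincide if and only if $A_i = B_i$ for every $i$, because any non-trivial monomial is uniquely identified by its leading letter. This is immediate but worth flagging, since it is precisely the feature that makes the non-commutative chain rule behave differently from its commutative counterpart, and it is what allows us to ``divide'' by the formal variable on the left.
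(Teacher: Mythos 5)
Your proof is correct. For parts (a) and (c) you are essentially where the paper is: the paper declares (a) straightforward (your monomial check is exactly that verification), and for (c) it differentiates the identity $(1+M)^{-1} = 1 - M(1+M)^{-1}$, whereas you differentiate $(1+M)(1+M)^{-1}=1$; both hinge on part (a) together with $M(0)=0$ killing the unwanted term, so the difference is cosmetic. The real divergence is in part (b): the paper gives no argument at all, simply citing Lemma~14 of \cite{AnsMulti-Sheffer}, while you supply a self-contained proof via the first-letter decomposition $F(\mb{z}) = F(0) + \sum_i z_i (D_i F)(\mb{z})$, the substitution $z_i = w_i(1+M(\mb{w}))$ combined with \eqref{R-M-formula}, and matching of left coefficients. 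That argument is sound: the decomposition is immediate from the definition of $D_i$ on monomials, substitution of constant-term-free series is multiplicative, and your flagged subtlety --- that $\sum_i w_i A_i = \sum_i w_i B_i$ forces $A_i = B_i$ --- is justified precisely by applying $D_j$ and using part (a), so it is not an extra assumption. (One caveat: the displayed hypothesis in the Lemma reads $z_i = w_i(1+M(\mb{z}))$, but as the usage in the proof of Proposition~\ref{Prop:Boolean-Meixner} confirms, the intended substitution is $z_i = w_i(1+M(\mb{w}))$, which is what you correctly used.) What your route buys is independence from the external reference; what it costs is nothing of substance.
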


\begin{proof}
Part (a) is straightforward. Part (b) is Lemma~14 of \cite{AnsMulti-Sheffer}. For part (c),
\[
\begin{split}
D_j \eta(\mb{w})
& = - D_j(1 - \eta(\mb{w})) = - D_j (1 + M(\mb{w}))^{-1} \\
& = - D_j \left[ 1 - M(\mb{w}) (1 + M(\mb{w}))^{-1} \right] = D_j M(\mb{w}) (1 + M(\mb{w}))^{-1}. \qedhere
\end{split}
\]
\end{proof}

\br
For $X_1, X_2, \ldots, X_d \in (\mc{A}^{sa}, \Phi)$, we will also use notation
\[
\Cum{X_1, X_2, \ldots, X_d} = \eta_\phi \left[ x_1 x_2 \ldots x_d \right],
\]
where $\phi$ is the joint distribution of $X_1, X_2, \ldots, X_d$ with respect to $\Phi$.

\subsection{Independence}

$\set{x_1, x_2, \ldots, x_d}$ are Boolean (resp., freely) independent with respect to $\phi$ if
\[
\eta\text{ (resp., $R$)} \left[ x_{\vec{u}} \right] = 0
\]
unless all $u(1) = u(2) = \ldots$. This is usually expressed as a ``mixed cumulants are zero'' condition. The condition for free independence in terms of moments is more complicated: whenever $u(1) \neq u(2) \neq \ldots \neq u(n)$ and $P_1, \ldots, P_n$ are polynomials with $\state{P_i(x_{u(i)})} = 0$, then
\[
\state{P_1(x_{u(1)}) P_2(x_{u(2)}) \ldots P_n(x_{u(n)})} = 0.
\]
In contrast, the definition of Boolean independence in terms of moments is more straightforward: again whenever $u(1) \neq u(2) \neq \ldots \neq u(n)$, for any $v(1), v(2), \ldots v(n) \geq 1$
\begin{equation}
\label{Boolean-independence}
\state{x_{u(1)}^{v(1)} x_{u(2)}^{v(2)} \ldots x_{u(n)}^{v(n)}} = \state{x_{u(1)}^{v(1)}} \state{x_{u(2)}^{v(2)}} \ldots \state{x_{u(n)}^{v(n)}}.
\end{equation}
Note that this definition does \emph{not} imply a property like free independence above for general polynomials rather than monomials, since for example
\begin{equation}
\label{Boolean-not-factor}
\state{x_1 1 x_1} = \state{x_1^2} \neq \state{x_1} \state{1} \state{x_1}.
\end{equation}

\begin{Remark}
\label{Remark:Boolean-product}
In Section~\ref{Subsec:Martingales}, we will need the notion of Boolean independence for more general algebras than just the polynomials. In a unital $\ast$-algebra $(\mc{A}, \Phi)$, non-unital $\ast$-subalgebras $\mc{B}_1, \mc{B}_2$ are Boolean independent if for any $X_i \in \mc{B}_{u(i)}$, $u(i) \neq u(i+1)$
\begin{equation}
\label{Boolean-factorization}
\State{X_1 X_2 \ldots X_n} = \State{X_1} \State{X_2} \ldots \State{X_n}.
\end{equation}
If $\Alg{1, \mc{B}_1}$ is the unital subalgebra of $\mc{A}$ generated by $1$ and $\mc{B}_1$, and similarly for $\Alg{1, \mc{B}_2}$, for elements of these subalgebra a factorization of the type \eqref{Boolean-factorization} no longer needs to hold. Nevertheless, the following particular cases will remain true.
\end{Remark}

\begin{Lemma}
\label{Lemma:Boolean-factor}
If $X \in \Alg{1, \mc{B}_1, \mc{B}_2}$, $Y \in \Alg{1, \mc{B}_1}$, $Y' \in \mc{B}_1$, $Z \in \Alg{1, \mc{B}_2}$, then
\begin{gather*}
\State{Y Z} = \State{Y} \State{Z}, \\
\State{X Y' Z} = \State{X Y'} \State{Z}.
\end{gather*}
\end{Lemma}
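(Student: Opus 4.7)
The plan is to reduce both identities to the defining Boolean factorization property \eqref{Boolean-factorization} by expanding $Y$, $Z$, $X$ in terms of alternating products of elements from $\mc{B}_1 \cup \mc{B}_2$, then invoking multilinearity. The key algebraic observation is that, since $\mc{B}_1$ and $\mc{B}_2$ are already (non-unital) $\ast$-algebras, any product of elements from $\mc{B}_1 \cup \mc{B}_2$ can be consolidated into an alternating product by merging consecutive factors from the same side; and any element of $\Alg{1,\mc{B}_i}$ can be written as $\alpha \cdot 1 + B$ with $\alpha \in \mf{C}$ and $B \in \mc{B}_i$.

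For the first identity, I would write $Y = \alpha + Y_1$ with $Y_1 \in \mc{B}_1$ and $Z = \beta + Z_1$ with $Z_1 \in \mc{B}_2$. Expanding $YZ$ into four terms and applying \eqref{Boolean-factorization} only on the single mixed product $Y_1 Z_1$ gives
\[
\State{YZ} = \alpha\beta + \alpha\State{Z_1} + \beta\State{Y_1} + \State{Y_1}\State{Z_1} = (\alpha + \State{Y_1})(\beta + \State{Z_1}) = \State{Y}\State{Z},
\]
using $\State{1} = 1$.

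For the second identity, first write $Z = \beta + Z_1$ with $Z_1 \in \mc{B}_2$, and decompose $X = \gamma \cdot 1 + \sum_k X^{(k)}$, where each $X^{(k)}$ is an alternating product of elements of $\mc{B}_1 \cup \mc{B}_2$ obtained by the consolidation procedure above. By linearity, it suffices to check the identity on each summand. On the scalar piece $\gamma \cdot 1$, the claim reduces to part (a) applied to $\gamma Y'$ and $Z$. For a fixed alternating product $X^{(k)}$, post-multiplication by $Y' \in \mc{B}_1$ yields either a strictly longer alternating product (if $X^{(k)}$ ends in $\mc{B}_2$) or one of the same length with its last $\mc{B}_1$-factor multiplied by $Y'$ (if $X^{(k)}$ ends in $\mc{B}_1$); in either case $X^{(k)} Y'$ is an alternating product ending in $\mc{B}_1$. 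Then $X^{(k)} Y' Z_1$ is an alternating product ending in $\mc{B}_2$, so \eqref{Boolean-factorization} gives
\[
\State{X^{(k)} Y' Z_1} = \State{X^{(k)} Y'}\,\State{Z_1},
\]
and combining with the $\beta$-piece yields $\State{X^{(k)} Y' Z} = \State{X^{(k)} Y'}\,\State{Z}$. Summing over $k$ and adding the scalar contribution produces the desired identity.

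The only mild subtlety, and the main bookkeeping obstacle, is treating the cases in which $X^{(k)} Y'$ arises from consolidating $Y'$ into the last factor of $X^{(k)}$: one must verify that the Boolean factorization applied to $X^{(k)} Y' Z_1$ and to $X^{(k)} Y'$ uses the \emph{same} alternating decomposition, so that the states of all common factors cancel correctly. This is immediate because the consolidation of $Y'$ with the last factor of $X^{(k)}$ is identical in both expressions, so one gets the product of $\State{\cdot}$'s over exactly the same list of alternating blocks, with $\State{Z_1}$ appearing in the longer expression and absent in the shorter one.
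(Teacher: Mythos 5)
Your argument is correct and complete: decomposing elements of $\Alg{1,\mc{B}_i}$ as scalar plus $\mc{B}_i$-part, writing $X$ as a combination of alternating words, and absorbing $Y'$ into the last block before applying \eqref{Boolean-factorization} is exactly the verification the paper has in mind (it states the lemma without proof, as an immediate consequence of the definition of Boolean independence). Nothing further is needed.
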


\subsection{Convolutions and products}

If $\phi, \psi$ are two unital linear functionals on $\mf{C} \langle \mb{x} \rangle$, then $\phi \uplus \psi$ (resp., $\phi \boxplus \phi$), their Boolean (resp., free) convolution, is a unital linear functional on $\mf{C} \langle \mb{x} \rangle$ determined by $\eta_\phi(\mb{z}) + \eta_\psi(\mb{z}) = \eta_{\phi \uplus \psi}(\mb{z})$ (resp., $R_\phi(\mb{z}) + R_\psi(\mb{z}) = R_{\phi \boxplus \psi}(\mb{z})$). See Lecture~12 of \cite{Nica-Speicher-book} for the relation between free convolution and free independence; the relation in the Boolean case is similar.

\br
Unital linear functionals $\phi^{\uplus t}$ (resp., $\phi^{\boxplus t}$) form a Boolean (resp., free) convolution semigroup if $\phi^{\uplus t} \uplus \phi^{\uplus s} = \phi^{\uplus (t+s)}$ (resp., $\phi^{\boxplus t} \boxplus \phi^{\boxplus s} = \phi^{\boxplus (t+s)}$).

\br
If $\set{\phi_1, \phi_2, \ldots, \phi_d}$ are unital linear functionals on $\mf{C}[x]$, their Boolean (resp., free) product is the unital linear functional $\phi = \phi_1 \odot \phi_2 \odot \ldots \odot \phi_d$ (resp., $\phi_1 \ast \phi_2 \ast \ldots \ast \phi_d$) determined by
\[
\eta_\phi(\mb{z}) = \eta_{\phi_1}(z_1) + \ldots + \eta_{\phi_d}(z_d),
\]
resp.
\[
R_\phi(\mb{z}) = R_{\phi_1}(z_1) + \ldots + R{\phi_d}(z_d).
\]

\begin{Remark}
Since we typically consider random variables with zero mean, many of our considerations are valid, or perhaps even more appropriate, for the Fermi convolution of \cite{Oravecz-Fermi}.
\end{Remark}

\section{Boolean Appell polynomials}
\label{Section:Appell}

\subsection{Single variable polynomials}

We first summarize the properties of the single variable Boolean Appell polynomials. The proofs are omitted, as the results follow from the more general multivariate formulas.

\begin{Remark}
If $\mu$ is a positive measure on $\mf{R}$, its Cauchy transform $G_\mu: \mf{C}^+ \rightarrow \mf{C}^-$ is the function
\[
G_\mu(z) = \int_{\mf{R}} \frac{1}{z - x} \,d\mu(x).
\]
Then the function
\[
K_\mu(z) = z - \frac{1}{G_\mu(z)}
\]
has all the properties of the Boolean cumulant generating function. If all the moments of $\mu$ are finite, and so $\mu$ is identified with a linear functional on polynomials, the relation between $K_\mu$ and the Boolean cumulant generating function $\eta$ of $\mu$ is simply
\[
\eta(z) = z K_\mu(1/z).
\]
We will denote the actual Boolean cumulants of $\mu$ by ${\kappa_n}$, so that
\[
\eta(z) = \sum_{n=1}^\infty \kappa_n z^n.
\]
Similarly, $m_n = \mu(x^n)$ are the moments of $\mu$,
\[
M(z) = \sum_{n=1}^\infty m_n z^n.
\]
\end{Remark}

\begin{Prop}
\label{Prop:One-variable-Appell}
Boolean Appell polynomials corresponding to the unital linear functional $\mu$ have the following properties.
\begin{enumerate}
\item
(centering)
$A_0(x) = 1$, $\mu(A_n) = 0$ for $n > 0$.
\item
(differential recursion)
\[
D A_n = A_{n-1},
\]
where
\[
D f(x) = \frac{f(x) - f(0)}{x}.
\]
\item
(recursion)
\[
x A_n = A_{n+1} + \kappa_{n+1},
\]
\item
(explicit formula)
\[
A_n(x) = x^n - \sum_{k=1}^n \kappa_k x^{n-k},
\]
\item
(generating function)
\[
\sum_{n=0}^\infty A_n(x) z^n = \frac{1 - \eta(z)}{1 - x z} = \frac{\frac{1}{z} - K \left( \frac{1}{z} \right)}{\frac{1}{z} - x},
\]
\item
(powers-of-$x$ expansion)
\[
x^n = \sum_{k=0}^n m_k A_{n-k}(x).
\]
\end{enumerate}
\end{Prop}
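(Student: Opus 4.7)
The plan is to extract the generating function~(e) directly from the two defining properties~(a) and~(b), and then read off the remaining identities as routine consequences.

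Set $F(x,z) = \sum_{n \geq 0} A_n(x) z^n$. The differential recursion~(b) rewrites as $D_x F(x,z) = z F(x,z)$, since $A_0 = 1$ is killed by $D$ and $D A_n = A_{n-1}$ for $n \geq 1$; unpacking the definition of $D$ this is $F(x,z) - F(0,z) = xz F(x,z)$, so
\[
F(x,z) = \frac{F(0,z)}{1 - xz}.
\]
Applying $\mu$ in the $x$ variable, centering~(a) gives $\mu(F(\cdot,z)) = 1$, while $\mu(1/(1-xz)) = \sum_{n \geq 0} m_n z^n = 1 + M(z)$. Hence $F(0,z) = (1 + M(z))^{-1}$, which by the moment-cumulant formula~\eqref{K-M-formula} equals $1 - \eta(z)$. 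This proves the first equality in~(e); the second is the change of variables $\eta(z) = z K_\mu(1/z)$ noted in the preceding remark.

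With~(e) in hand the remaining items are immediate. Expanding
\[
F(x,z) = \Bigl( 1 - \sum_{k \geq 1} \kappa_k z^k \Bigr) \sum_{j \geq 0} x^j z^j
\]
and extracting the coefficient of $z^n$ yields the explicit formula~(d); multiplying~(d) by $x$ then gives $x A_n = A_{n+1} + \kappa_{n+1}$ after recognizing the truncated sum, which is~(c); and dividing~(e) by $1 - \eta(z)$ produces
\[
\sum_{n \geq 0} x^n z^n = F(x,z)\,(1 + M(z)),
\]
whose $z^n$ coefficient is~(f).

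There is essentially no obstacle: the entire proposition collapses onto the relation~\eqref{K-M-formula} once the ansatz $F(x,z) = F(0,z)/(1-xz)$ has been identified from the differential recursion. For the same reason the argument uses only that $\mu$ is unital, not that $\mu$ is positive, and it parallels exactly (in the single-variable case) the multivariate derivation carried out in Section~\ref{Section:Appell} which the author invokes.
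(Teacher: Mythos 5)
Your proof is correct and follows essentially the same route as the paper: the paper verifies that $(1-\mb{x}\cdot\mb{z})^{-1}(1+M(\mb{z}))^{-1}$ satisfies the defining conditions $D_{x_i}H = z_i H$ and $\state{H}=1$ in the multivariate setting and then specializes, whereas you run the same computation in the forward direction (solving $F-F(0,\cdot)=xzF$ and normalizing via $\mu$) directly in one variable. Both arguments reduce everything to the moment--cumulant relation~\eqref{K-M-formula}, and your derivations of (c), (d), (f) from the generating function are the intended ones.
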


\begin{Remark}
Boolean Appell polynomials are polynomials of Brenke type \cite{Chihara-Brenke-1}, that is, their generating function has the form $A(z) B(xz)$. Some of the properties in this proposition follow from the results for this more general class. Recall also that, as mentioned in the Introduction, Boolean Appell polynomials are a particular case of the $q$-Appell polynomials considered by Al-Salam. As such, they form a commutative groups under the ``Appell multiplication'' of power series \cite{Appell}, like the classical Appell polynomials and unlike the free Appell polynomials. Another property which holds in the Boolean and classical case but not in the free case is that the lowering operator for the Sheffer class (see the next section) commutes with the lowering operator for the Appell class: $u^{-1}(D) P_n = P_{n-1}$. We will not pursue this approach further.
\end{Remark}

\subsection{Multivariate Boolean Appell polynomials}

Let $\mc{A}$ be a complex $\ast$-algebra with a unital $\ast$-compatible linear functional $\Phi$. For any $n \in \mf{N}$, define a map
\[
A: \left(\mc{A}^{sa}\right)^n \rightarrow \mc{A}, \qquad (X_1, X_2, \ldots, X_n) \mapsto A(X_1, X_2, \ldots, X_n)
\]
by specifying that $A(X_1, X_2, \ldots, X_n)$ is a polynomial in $X_1, X_2, \ldots, X_n$,
\[
D_{X_i} A(X_1, X_2, \ldots, X_n) = \delta_{i 1} A(X_2, \ldots, X_n)
\]
(with $A(\emptyset) = 1$), and
\[
\State{A(X_1, X_2, \ldots, X_n)} = 0.
\]

\br
Since
\[
P(X_1, X_2, \ldots, X_n) = \sum_{i=1}^n X_i D_{X_i} P(X_1, X_2, \ldots, X_n) + \text{ const},
\]
the maps $A$ are determined uniquely (unlike in the free case \cite[Section 3.3]{AnsAppell}, where an extra order condition had to be specified). It is easy to see that each $A$ is multilinear. By taking $\mc{A} = \mf{C} \langle x_1, x_2, \ldots, x_d \rangle$, we get the following definition.

\begin{Defn}
A Boolean Appell polynomial family $\set{A_{\vec{u}}(x_1, x_2, \ldots, x_d)}$ corresponding to a functional $\phi$ on $\mf{C} \langle \mb{x} \rangle$ is a monic polynomial family determined by
\[
D_i A_{(j, \vec{u})}(\mb{x}) = \delta_{ij} A_{\vec{u}}(\mb{x})
\]
and
\[
\state{A_{\vec{u}}(\mb{x})} = \delta_{\vec{u}, \emptyset}.
\]
\end{Defn}

\begin{Prop}
\label{Prop:Appell-expansions}
Let $X_1, X_2, \ldots, X_d \in (\mc{A}^{sa}, \Phi)$, $\phi$ their joint distribution, $\eta(\mb{z})$ its Boolean cumulant generating function, and $\Cum{\cdot}$ its Boolean cumulant functional.
\begin{enumerate}
\item
The generating function for the Boolean Appell polynomials is
\[
H(\mb{x}, \mb{z}) = 1 + \sum_{\vec{u}} A_{\vec{u}}(\mb{x}) z_{\vec{u}} = \left( 1 - \mb{x} \cdot \mb{z} \right)^{-1} (1 - \eta(\mb{z}))= \left( 1 - \mb{x} \cdot \mb{z} \right)^{-1} (1 + M(\mb{z}))^{-1}.
\]
\item
The mutual expansions between monomials and Boolean Appell polynomials are
\begin{equation}
\label{Explicit-form}
A (X_1, X_2, \ldots, X_n) = X_1 X_2 \ldots X_n - \sum_{k=0}^{n-1} X_1 \ldots X_k \Cum{X_{k+1}, \ldots, X_n}.
\end{equation}
and
\[
X_1 X_2 \ldots X_n = A(X_1, X_2, \ldots, X_n) + \sum_{k=0}^{n-1} A(X_1, \ldots, X_k) \state{X_{k+1} \ldots X_n}.
\]
\item
The Boolean Appell polynomials satisfy a recursion relation
\[
X A(X_1, X_2, \ldots, X_n) = A(X, X_1, X_2, \ldots, X_n) + \Cum{X, X_1, X_2, \ldots, X_n}.
\]
\end{enumerate}
\end{Prop}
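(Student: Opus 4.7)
The plan is to prove part (a) first by verifying that the proposed generating function satisfies the defining properties of the Boolean Appell family, then to deduce (b) and (c) as elementary consequences of the generating function identity. Throughout I regard the indeterminates $\mb{x}$ and $\mb{z}$ as each non-commuting within their own tuple but commuting across tuples, so that the basic expansion $(1 - \mb{x}\cdot\mb{z})^{-1} = \sum_{\vec{u}} x_{\vec{u}} z_{\vec{u}}$ (with $\mb{x}\cdot\mb{z} = \sum_j x_j z_j$) is unambiguous.

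For (a), I define $\widetilde{H}(\mb{x},\mb{z}) = (1-\mb{x}\cdot\mb{z})^{-1}(1-\eta(\mb{z})) = 1 + \sum_{\vec{u}} \widetilde{A}_{\vec{u}}(\mb{x}) z_{\vec{u}}$ and verify the two defining conditions of the Boolean Appell family. For centering, I apply $\phi$ in the $\mb{x}$-variables: since $\state{(1-\mb{x}\cdot\mb{z})^{-1}} = 1 + M(\mb{z})$ and $1-\eta(\mb{z}) = (1+M(\mb{z}))^{-1}$ by~\eqref{K-M-formula}, the image collapses to $1$, forcing $\state{\widetilde{A}_{\vec{u}}} = \delta_{\vec{u},\emptyset}$. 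For the derivative recursion, stripping the leftmost $x_i$ from each monomial in $(1-\mb{x}\cdot\mb{z})^{-1} = \sum_{\vec{u}} x_{\vec{u}} z_{\vec{u}}$ gives $D_i (1-\mb{x}\cdot\mb{z})^{-1} = z_i (1-\mb{x}\cdot\mb{z})^{-1}$, and since $1-\eta(\mb{z})$ does not depend on $\mb{x}$, part (a) of Lemma~\ref{Lemma:Derivatives} yields $D_i \widetilde{H} = z_i \widetilde{H}$. Matching the coefficient of $z_{(j,\vec{v})}$ gives $D_i \widetilde{A}_{(j,\vec{v})} = \delta_{ij} \widetilde{A}_{\vec{v}}$. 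The uniqueness noted in the setup of the multivariate Boolean Appell family then forces $\widetilde{A}_{\vec{u}} = A_{\vec{u}}$, and the second equality in (a) follows from $1-\eta(\mb{z}) = (1+M(\mb{z}))^{-1}$.

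For (b), the first identity~\eqref{Explicit-form} is obtained by multiplying out $(1-\mb{x}\cdot\mb{z})^{-1}(1-\eta(\mb{z})) = \sum_{\vec{u}} x_{\vec{u}} z_{\vec{u}} - \sum_{\vec{u},\vec{v}} x_{\vec{u}} \Cum{x_{\vec{v}}} z_{\vec{u}} z_{\vec{v}}$ and reading off the coefficient of $z_{\vec{w}}$, where $\vec{w}$ is split as the concatenation of $\vec{u}$ with a (possibly empty in the first sum, necessarily nonempty in the second) trailing piece. Specialization at operators is immediate. For the second identity I multiply the generating function on the right by $(1-\eta(\mb{z}))^{-1} = 1+M(\mb{z})$, obtaining $(1-\mb{x}\cdot\mb{z})^{-1} = H(\mb{x},\mb{z})(1+M(\mb{z}))$, and again collect coefficients.

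For (c), I rewrite the generating function identity as $(1-\mb{x}\cdot\mb{z}) H(\mb{x},\mb{z}) = 1-\eta(\mb{z})$, or equivalently $H = 1-\eta(\mb{z}) + \sum_j x_j z_j H$. Comparing the coefficient of $z_{(j,\vec{u})}$ on both sides yields $A_{(j,\vec{u})}(\mb{x}) + \Cum{x_{(j,\vec{u})}} = x_j A_{\vec{u}}(\mb{x})$, which, evaluated at operators, is exactly the claimed recursion. The only subtlety in the whole argument is keeping the order of multiplication straight in the $\vec{u}$-indexed expansions; once the commutation convention between $\mb{x}$ and $\mb{z}$ is fixed, every step is a direct series manipulation with no genuine obstacle.
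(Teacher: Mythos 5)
Your proposal is correct and follows essentially the same route as the paper: verify that $(1-\mb{x}\cdot\mb{z})^{-1}(1-\eta(\mb{z}))$ satisfies the derivative recursion and the centering condition, invoke uniqueness, and then read off the expansions in (b) by multiplying the generating function by $1-\eta(\mb{z})$ or $1+M(\mb{z})$. The only (immaterial) deviation is that you obtain (c) by comparing coefficients of $z_{(j,\vec{u})}$ in $(1-\mb{x}\cdot\mb{z})H = 1-\eta(\mb{z})$, whereas the paper deduces it directly from the explicit formula~\eqref{Explicit-form}; both are one-line manipulations of the same identity.
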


\begin{proof}
For (a), we note that
\[
D_{x_i} H(\mb{x}, \mb{z})
= D_{x_i} \left( 1 + \sum_{\vec{u}} x_{\vec{u}} z_{\vec{u}} \right) (1 + M(\mb{z}))^{-1}
= z_i H(\mb{x}, \mb{z})
\]
and
\[
\state{H(\mb{x}, \mb{z})}
= \state{\left( 1 + \sum_{\vec{u}} x_{\vec{u}} z_{\vec{u}} \right) (1 + M(\mb{z}))^{-1}}
= (1 + M(\mb{z})) (1 + M(\mb{z}))^{-1} = 1.
\]
Since these conditions determine $A_{\vec{u}}$ uniquely, the result follows.

\br
For (b)
\[
\begin{split}
H(\mb{x}, \mb{z})
& = 1 + \sum_{\vec{u}} A_{\vec{u}}(x) z_{\vec{u}}
= \left(1 + \sum_{\vec{u}} x_{\vec{u}} z_{\vec{u}} \right) (1 - \eta(\mb{z})) \\
& = 1 + \sum_{\vec{u}} x_{\vec{u}} z_{\vec{u}} - \sum_{\vec{u}} \Cum{x_{\vec{u}}} z_{\vec{u}} - \sum_{\vec{u}, \vec{v}} x_{\vec{u}} \Cum{x_{\vec{v}}} z_{\vec{u}} z_{\vec{v}},
\end{split}
\]
which implies
\[
A_{\vec{u}} = x_{\vec{u}} - \sum_{i=0}^{n-1} x_{(u(1), \ldots, u(i))} \Cum{x_{(u(i+1), \ldots, u(n))}}.
\]
Also
\[
\begin{split}
(1 - \mb{x} \cdot \mb{z})^{-1}
& = 1 + \sum_{\vec{u}} x_{\vec{u}} z_{\vec{u}}
= (1 + \sum_{\vec{u}} A_{\vec{u}} z_{\vec{u}}) (1 + M(\mb{z})) \\
& = 1 + \sum_{\vec{u}} A_{\vec{u}} z_{\vec{u}} + \sum_{\vec{u}} \state{x_{\vec{u}}} z_{\vec{u}} + \sum_{\vec{u}, \vec{v}} A_{\vec{u}} \state{x_{\vec{v}}} z_{\vec{u}} z_{\vec{v}},
\end{split}
\]
which implies
\[
x_{\vec{u}} = A_{\vec{u}} + \sum_{i=0}^{n-1} A_{(u(1), \ldots, u(i))} \state{x_{(u(i+1), \ldots, u(n))}}.
\]

\br
Finally, part (c) follows from the first expansion in part (b).
\end{proof}

\br
The proof of Proposition~\ref{Prop:One-variable-Appell} now follows from the observation that
\[
A_n(x_i) = A_{\underbrace{1, 1, \ldots, 1}_{n \text{ times}}}(\mb{x}).
\]

\subsection{Boolean martingales}
\label{Subsec:Martingales}

\begin{Prop}(Boolean binomial properties).
\label{Prop:Boolean-binomial}
\begin{enumerate}
\item
If the variables $\set{x_i}$ are Boolean independent and
\[
k = \min (i | x_{u(i+1)} = x_{u(i+2)} = \ldots = x_{u(n)} = x),
\]
then
\[
A_{\vec{u}}(\mb{x})
= x_{(u(1), \ldots, u(k))} A_{n-k}(x).
\]
More generally, if $X_k$ is Boolean independent from $X_{k+1}, \ldots, X_n$, then
\[
A(X_1, X_2, \ldots, X_n) = X_1 X_2 \ldots X_k A(X_{k+1}, \ldots, X_n).
\]
\item
If $\set{X_i}, \set{Y_i} \in \mc{A}$ are Boolean independent with respect to $\Phi$, then
\[
\begin{split}
A(X_1 + Y_1, X_2 + Y_2, \ldots, X_n + Y_n)
& = A(X_1, X_2, \ldots, X_n) + A(Y_1, Y_2, \ldots, Y_n) \\
&\quad + \sum_{k=1}^{n-1} (X_1 + Y_1) \ldots (X_{k-1} + Y_{k-1}) Y_k A(X_{k+1}, \ldots, X_n) \\
&\quad + \sum_{k=1}^{n-1} (X_1 + Y_1) \ldots (X_{k-1} + Y_{k-1}) X_k A(Y_{k+1}, \ldots, Y_n).
\end{split}
\]
\end{enumerate}
\end{Prop}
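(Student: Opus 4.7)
For part (a), my plan is to work directly from the explicit formula \eqref{Explicit-form} of Proposition~\ref{Prop:Appell-expansions}(b),
\[
A(X_1, \ldots, X_n) = X_1 X_2 \ldots X_n - \sum_{j=0}^{n-1} X_1 \ldots X_j \Cum{X_{j+1}, \ldots, X_n}.
\]
Splitting the sum at $j = k$ and factoring $X_1 \ldots X_k$ out of the block $j \geq k$ makes the bracketed factor collapse to $A(X_{k+1}, \ldots, X_n)$, so the desired identity reduces to the vanishing of $\Cum{X_{j+1}, \ldots, X_n}$ for each $j < k$. Under the hypothesis that the subalgebra containing $X_1, \ldots, X_k$ is Boolean independent from the subalgebra containing $X_{k+1}, \ldots, X_n$, each such cumulant is a mixed Boolean cumulant (it involves $X_k$ and $X_{k+1}$, one on each side of the cut), hence vanishes. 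The first, monomial-level, statement of (a) is the specialization where the $\set{x_i}$ themselves are Boolean independent and $u(k) \neq u(k+1) = \ldots = u(n)$: the relevant cumulants then involve at least two distinct indices and vanish for the same reason.

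For part (b) I would induct on $n$, with the recursion from Proposition~\ref{Prop:Appell-expansions}(c) rewritten as
\[
A(Z_1, Z_2, \ldots, Z_n) = Z_1 A(Z_2, \ldots, Z_n) - \Cum{Z_1, Z_2, \ldots, Z_n}.
\]
The base case $n = 1$ is $A(X_1 + Y_1) = (X_1 + Y_1) - \state{X_1 + Y_1} = A(X_1) + A(Y_1)$. For the inductive step, apply the inductive hypothesis to $A(Z_2, \ldots, Z_n)$ (producing summands indexed by $k = 2, \ldots, n-1$), multiply by $Z_1 = X_1 + Y_1$, and distribute. Multiplying the inductive sums by $X_1 + Y_1$ simply prepends $Z_1$ to each prefix, turning them into the $k \geq 2$ summands of the target. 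The diagonal pieces $X_1 A(X_2, \ldots, X_n)$ and $Y_1 A(Y_2, \ldots, Y_n)$ are converted back by the recursion into $A(X_1, \ldots, X_n) + \Cum{X_1, \ldots, X_n}$ and $A(Y_1, \ldots, Y_n) + \Cum{Y_1, \ldots, Y_n}$, while the cross pieces $Y_1 A(X_2, \ldots, X_n)$ and $X_1 A(Y_2, \ldots, Y_n)$ supply the missing $k = 1$ summands. Finally, multilinearity of $\eta$ together with the vanishing of mixed Boolean cumulants yields $\Cum{Z_1, \ldots, Z_n} = \Cum{X_1, \ldots, X_n} + \Cum{Y_1, \ldots, Y_n}$, which exactly cancels the two stray cumulants introduced by the recursion.

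I expect the main obstacle to be the bookkeeping in (b): checking that the cross terms line up as the $k = 1$ contributions and that the $Z_1$-prepended inductive sums become the $k \geq 2$ contributions. A generating-function approach via $H(\mb{x}, \mb{z}) = (1 - \mb{x}\cdot\mb{z})^{-1}(1 - \eta(\mb{z}))$ and the additivity $\eta_{X+Y}(\mb{z}) = \eta_X(\mb{z}) + \eta_Y(\mb{z})$ is conceivable, but manipulating the non-commutative inverse of $1 - (\mb{x} + \mb{y})\cdot\mb{z}$ without commutativity between the $\mb{x}$ and $\mb{y}$ entries looks less transparent than the direct induction.
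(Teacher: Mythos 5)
Your part (a) is exactly the paper's argument: split the sum in \eqref{Explicit-form} at $i=k$, discard the cumulants with $i<k$ because each contains $X_k$ together with $X_{k+1},\ldots,X_n$ and is therefore a vanishing mixed Boolean cumulant, and factor $X_1\cdots X_k$ out of what remains. For part (b) you take a genuinely different route. The paper expands $A(X_1+Y_1,\ldots,X_n+Y_n)$ multilinearly into $2^n$ terms, groups them according to the maximal homogeneous tail (all $X$, all $Y$, or entries free up to position $k-1$, then $Y_k$ followed by $X_{k+1},\ldots,X_n$, and symmetrically), and applies part (a) to each group; your proof instead inducts on $n$ through the recursion $A(Z_1,\ldots,Z_n)=Z_1A(Z_2,\ldots,Z_n)-\Cum{Z_1,\ldots,Z_n}$ of Proposition~\ref{Prop:Appell-expansions}(c) and never invokes part (a), paying for this with the explicit additivity $\Cum{Z_1,\ldots,Z_n}=\Cum{X_1,\ldots,X_n}+\Cum{Y_1,\ldots,Y_n}$ (multilinearity of $\eta$ plus vanishing of mixed cumulants), which the paper's route absorbs into part (a). The bookkeeping you flagged does work out: prepending $Z_1$ to the inductive sums produces precisely the $k\geq 2$ summands, the cross terms $Y_1A(X_2,\ldots,X_n)$ and $X_1A(Y_2,\ldots,Y_n)$ are the $k=1$ summands with empty prefix, and the two cumulants reintroduced by converting $X_1A(X_2,\ldots,X_n)$ and $Y_1A(Y_2,\ldots,Y_n)$ back cancel exactly against $-\Cum{Z_1,\ldots,Z_n}$ (the $n=2$ case confirms the signs). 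The paper's grouping is arguably more illuminating, since it explains where the sum over $k$ comes from and delivers (b) as an immediate corollary of (a), while your induction is more mechanical but self-contained and avoids having to see the combinatorial partition of the $2^n$ multilinear terms.
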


\begin{proof}
Using the explicit form formula~\eqref{Explicit-form},
\[
\begin{split}
A(X_1, X_2, \ldots, X_n)
& = X_1 X_2 \ldots X_n - \sum_{i=0}^{n-1} X_1 \ldots X_i \Cum{X_{i+1}, \ldots, X_n} \\
& = X_1 X_2 \ldots X_n - \sum_{i=k}^{n-1} X_1 \ldots X_i \Cum{X_{i+1}, \ldots, X_n} \\
& = X_1 X_2 \ldots X_n - X_1 \ldots X_k \sum_{i=k}^{n-1} X_{k+1} \ldots X_i \Cum{X_{i+1}, \ldots, X_n} \\
& = X_1 X_2 \ldots X_k A(X_{k+1}, \ldots, X_n).
\end{split}
\]
Therefore, using the multi-linearity of the Boolean Appell polynomials
\[
\begin{split}
A(X_1 + Y_1, X_2 + Y_2, \ldots, X_n + Y_n)
& = A(X_1, X_2, \ldots, X_n) + A(Y_1, Y_2, \ldots, Y_n) \\
&\quad + \sum_{k=1}^{n-1} A(X_1 + Y_1, \ldots, X_{k-1} + Y_{k-1}, Y_k, X_{k+1}, \ldots, X_n) \\
&\quad + \sum_{k=1}^{n-1} A(X_1 + Y_1, \ldots, X_{k-1} + Y_{k-1}, X_k, Y_{k+1}, \ldots, Y_n) \\
& = A(X_1, X_2, \ldots, X_n) + A(Y_1, Y_2, \ldots, Y_n) \\
&\quad + \sum_{k=1}^{n-1} (X_1 + Y_1) \ldots (X_{k-1} + Y_{k-1}) Y_k A(X_{k+1}, \ldots, X_n) \\
&\quad + \sum_{k=1}^{n-1} (X_1 + Y_1) \ldots (X_{k-1} + Y_{k-1}) X_k A(Y_{k+1}, \ldots, Y_n).
\end{split}
\]
\end{proof}

\begin{Remark}
Let $\mc{A}$ be a complex $\ast$-algebra, $\Phi$ a state on it, and $\mc{B} \subset \mc{A}$ a subalgebra. We can form Hilbert spaces $L^2(\mc{B}, \Phi) \subset L^2(\mc{A}, \Phi)$, and let $\State{\cdot | \mc{B}}$ denote the projection on the subspace $L^2(\mc{B}, \Phi)$. If in fact $\State{\mc{A} | \mc{B}} \subset \mc{A}$, this projection is called a (state-preserving) conditional expectation. This is always the case if $\mc{A}, \mc{B}$ are von Neumann algebras and $\Phi$ is a trace, however states in Boolean theory typically are not tracial. Note that if $\Phi$ is faithful on $\mc{B}$, $\State{Y | \mc{B}}$ is characterized by the property that
\begin{equation}
\label{Projection}
\State{X \state{Y | \mc{B}}} = \State{X Y}
\end{equation}
for any $X \in \mc{B}$.
\end{Remark}

\begin{Prop}
If $\Phi$ is a faithful state, $\set{X_i, Y_i | i = 1, \ldots, n} \subset (\mc{A}^{sa}, \Phi)$, $\set{X_i} \subset \mc{B}$ and $\set{Y_i}$ are Boolean independent of the subalgebra $\mc{B}$ in the sense of Remark~\ref{Remark:Boolean-product}, then
\[
\State{A(X_1 + Y_1, X_2 + Y_2, \ldots, X_n + Y_n) | \mc{B}} = A(X_1, X_2, \ldots, X_n).
\]
In particular, Boolean Appell polynomials are Boolean martingales
\[
\State{A_n(X + Y) | X} = A_n(X)
\]
and
\[
\State{\left. \frac{1 - (\eta_X(z) + \eta_Y(z))}{1 - (X + Y)z} \right| \mc{B}} = \frac{1 - \eta_X(z)}{1 - X z}.
\]
\end{Prop}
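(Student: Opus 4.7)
My approach is to expand $A(X_1 + Y_1, \ldots, X_n + Y_n)$ via the binomial identity of Proposition~\ref{Prop:Boolean-binomial}(b) and then show that $\State{\cdot \,|\, \mc{B}}$ annihilates every term except $A(X_1, \ldots, X_n)$. Letting $\mc{C}$ denote the non-unital $\ast$-subalgebra generated by $Y_1, \ldots, Y_n$, the hypothesis says that $\mc{B}$ and $\mc{C}$ are Boolean independent in the sense of Remark~\ref{Remark:Boolean-product}; hence the families $\{X_i\} \subset \mc{B}$ and $\{Y_i\} \subset \mc{C}$ are themselves Boolean independent, and the binomial expansion legitimately applies.

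To verify the projection identity I use the characterization~\eqref{Projection}: $\State{T \,|\, \mc{B}} = 0$ iff $\State{W T} = 0$ for every $W \in \mc{B}$. The expansion produces three kinds of ``unwanted'' terms. For the pure $Y$-term $A(Y_1, \ldots, Y_n) \in \Alg{1, \mc{C}}$, the first formula of Lemma~\ref{Lemma:Boolean-factor} (with $\mc{B}_1 = \mc{B}$, $\mc{B}_2 = \mc{C}$) gives
\[
\State{W \, A(Y_1, \ldots, Y_n)} = \State{W} \, \State{A(Y_1, \ldots, Y_n)} = 0,
\]
the second factor vanishing by the centering defining Appell polynomials.

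The remaining two shapes are
\[
U_{k-1} \, Y_k \, A(X_{k+1}, \ldots, X_n) \qquad \text{and} \qquad U_{k-1} \, X_k \, A(Y_{k+1}, \ldots, Y_n),
\]
where $U_{k-1} = (X_1 + Y_1) \cdots (X_{k-1} + Y_{k-1}) \in \Alg{1, \mc{B}, \mc{C}}$. For the first, the second formula of Lemma~\ref{Lemma:Boolean-factor} applied with $\mc{B}_1 = \mc{C}$, $\mc{B}_2 = \mc{B}$, $X = W U_{k-1}$, $Y' = Y_k \in \mc{C}$, $Z = A(X_{k+1}, \ldots, X_n) \in \Alg{1, \mc{B}}$ yields
\[
\State{W U_{k-1} Y_k A(X_{k+1}, \ldots, X_n)} = \State{W U_{k-1} Y_k} \, \State{A(X_{k+1}, \ldots, X_n)} = 0,
\]
and the symmetric argument (swapping the roles of $\mc{B}$ and $\mc{C}$) handles the second shape. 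Since $A(X_1, \ldots, X_n)$ already lies in $\mc{B}$, the projection identity follows. The scalar martingale statement is the specialization $X_i = X$, $Y_i = Y$, $\mc{B} = \Alg{1, X}$, and the generating-function reformulation follows by summing over $n$ and combining Proposition~\ref{Prop:One-variable-Appell}(e) with the Boolean-convolution identity $\eta_{X + Y}(z) = \eta_X(z) + \eta_Y(z)$.

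The only delicate point is the invocation of Lemma~\ref{Lemma:Boolean-factor}: Boolean independence does not give full multiplicative factorization across the two \emph{unital} algebras $\Alg{1, \mc{B}}$ and $\Alg{1, \mc{C}}$, as witnessed by~\eqref{Boolean-not-factor}. What makes the argument go through is that the binomial expansion of part~(b) has been designed so that in every unwanted summand one can isolate a single ``bare'' Boolean-independent letter $Y_k$ (or $X_k$) flanked by a centered tail $A(X_{k+1}, \ldots, X_n)$ (or $A(Y_{k+1}, \ldots, Y_n)$), which is exactly the configuration that Lemma~\ref{Lemma:Boolean-factor} is tailored to handle.
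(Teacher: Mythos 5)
Your proof is correct and follows essentially the same route as the paper's: expand via Proposition~\ref{Prop:Boolean-binomial}(b), test against an arbitrary element of $\mc{B}$ using the characterization~\eqref{Projection}, and kill every unwanted term with Lemma~\ref{Lemma:Boolean-factor} together with the centering of the Appell polynomials. Your write-up is in fact more explicit than the paper's about which instance of Lemma~\ref{Lemma:Boolean-factor} handles which term.
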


\begin{proof}
Using Proposition~\ref{Prop:Boolean-binomial}, Lemma~\ref{Lemma:Boolean-factor}, and the fact that the Boolean Appell polynomials are centered, for any $X \in \mc{B}$
\[
\begin{split}
\State{X A(X_1 + Y_1, \ldots, X_n + Y_n)}
& = \State{X A(X_1, X_2, \ldots, X_n)} + \State{X A(Y_1, Y_2, \ldots, Y_n)} \\
&\quad + \sum_{k=1}^{n-1} \State{X (X_1 + Y_1) \ldots (X_{k-1} + Y_{k-1}) Y_k A(X_{k+1}, \ldots, X_n)} \\
&\quad + \sum_{k=1}^{n-1} \State{X (X_1 + Y_1) \ldots (X_{k-1} + Y_{k-1}) X_k A(Y_{k+1}, \ldots, Y_n)} \\
& = \State{X A(X_1, X_2, \ldots, X_n)}.
\end{split}
\]
\end{proof}

\br
The preceding proposition is closely related to the Markov property for the processes with Boolean independent increments (see Section 4.2 of \cite{Franz-Unification}), which will be investigated in a future paper.

\begin{Remark}[Fock space representation and the Kailath-Segall polynomials]
While the Appell polynomials are sometimes called Wick products, it is more appropriate to reserve that name for the following objects. Let $\mc{A}_0$ be a $\ast$-algebra with state $\psi$. The operators $W(f_1, f_2, \ldots, f_k)$ in the algebra of symbols generated by $\set{X(f)}$, for $f_i \in \mc{A}_0^{sa}$, are defined via the relations
\begin{align}
\label{Wick-recursion}
W(f) & = X(f), \notag \\
W(f, f_1) & = X(f) W(f_1) - W(f f_1) - \psi \left[ f f_1 \right] I, \\
W(f, f_1, \ldots, f_n) & = X(f) W(f_1, f_2, \ldots, f_n) - W(f f_1, f_2, \ldots, f_n) \notag
\end{align}
for $n \geq 2$. Compare with Section 3.7 of \cite{AnsAppell}, and note the use of the Boolean annihilation operator, acting only on the first level of the Fock space (see below), as in Section~\ref{Subsec:Process} and Proposition~\ref{Prop:General-Boolean-Fock}. $W(f_1, f_2, \ldots, f_n)$ is a polynomial in the variables
\[
\set{X\bigl(\prod_{i \in S} f_i \bigr) | S \subset \set{1, 2, \ldots, n}},
\]
called the (Boolean) Kailath-Segall polynomial. However,
\[
A(X(f_1), X(f_2), \ldots, X(f_n)) = \sum_{\substack{\pi \in \Int(n) \\ \pi = (B_1, B_2, \ldots, B_k)}} W \Bigl(\prod_{i(1) \in B_1} f_{i(1)}, \prod_{i(2) \in B_2} f_{i(2)}, \ldots, \prod_{i(k) \in B_k} f_{i(k)} \Bigr)
\]
and in particular is a polynomial in $\set{X(f_1), X(f_2), \ldots, X(f_n)}$ only. Note that this expansion is exactly the same as in the free case \cite[Proposition 3.22]{AnsAppell}, but of course in terms of different polynomials. The expansions in Proposition~\ref{Prop:Appell-expansions} can now be derived from the more basic expansions in terms of the Kailath-Segall polynomials.

\br
Denote $\mc{H} = L^2(\mc{A}_0, \psi)$. Let
\[
\Falg(\mc{H}) = \mf{C} \Omega \oplus \mc{H} \oplus \mc{H}^{\otimes 2} \oplus \ldots
\]
be its algebraic full Fock space. If we represent all these operators on $\Falg(\mc{H})$ via
\begin{equation}
\label{Wick-representation}
W(f_1, f_2, \ldots, f_n) \Omega = f_1 \otimes f_2 \otimes \ldots \otimes f_n,
\end{equation}
and put an inner product on $\Falg(\mc{H})$ in such a way that different components of $\Falg(\mc{H})$ are orthogonal and each $X(f)$ is symmetric, it follows from the relations~\eqref{Wick-representation} and \eqref{Wick-recursion} that $\norm{\zeta} = 0$ for $\zeta \in \mc{H}^{\otimes k}$, $k \geq 2$. So in this case
\[
A(X(f_1), X(f_2), \ldots, X(f_n)) \Omega = f_1 f_2 \ldots f_n.
\]
\end{Remark}

\section{Boolean Meixner polynomials}
\label{Section:Meinxer}

\subsection{Boolean Meixner states}
Since we have found Boolean Appell polynomials to have generating functions of the form $\left( 1 - \mb{x} \cdot \mb{z} \right)^{-1} (1 - \eta(\mb{z}))$, we now define Boolean Sheffer polynomials to be polynomial families with generating functions of the form
\[
\left( 1 - \mb{x} \cdot \mb{V}(\mb{z}) \right)^{-1} (1 - \eta(\mb{V}(\mb{z})))
\]
for some $d$-tuple of non-commutative power series $\mb{V}$. Recall \cite{AnsAppell} that free Sheffer polynomial families have generating functions of the form
\[
\Bigl( 1 - \mb{x} \cdot \mb{U}(\mb{z}) + R(\mb{U}(\mb{z})) \Bigr)^{-1}.
\]
We now show that these are the same.

\begin{Prop}
\label{Prop:Free-Boolean-change}
Let $\phi$ be a functional, and $R, \eta$ its free, respectively, Boolean, cumulant generating functions. Then
\[
\Bigl( 1 - \mb{x} \cdot \mb{U} + R(\mb{U}) \Bigr)^{-1} = \left( 1 - \mb{x} \cdot \mb{V} \right)^{-1} (1 - \eta(\mb{V})),
\]
where
\begin{equation}
\label{U-V}
\mb{U} = (1 + M(\mb{V})) \mb{V}
\end{equation}
and
\begin{equation}
\label{V-U}
\mb{V} = (1 + R(\mb{U}))^{-1} \mb{U}
\end{equation}
are two $d$-tuples of power series.
\end{Prop}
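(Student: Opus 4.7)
The plan is to work with the identity in the equivalent form obtained by inverting both sides and clearing the factor $(1-\eta(\mb{V}))$, namely
\[
(1 - \eta(\mb{V}))\bigl(1 - \mb{x}\cdot\mb{U} + R(\mb{U})\bigr) = 1 - \mb{x}\cdot\mb{V},
\]
and then reduce everything to the two known generating-function identities (K-M-formula) $1-\eta(\mb{z}) = (1+M(\mb{z}))^{-1}$ and (R-M-formula) $R\bigl((1+M(\mb{z}))z_1,\ldots,(1+M(\mb{z}))z_d\bigr) = M(\mb{z})$. Throughout I will use the convention, visible in Proposition~\ref{Prop:Appell-expansions}, that the indeterminates $\mb{x}$ commute with the indeterminates $\mb{z}$ (and hence with any power series in $\mb{V}(\mb{z})$ or $\mb{U}(\mb{z})$), while each family separately is non-commuting.

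First I would check that \eqref{U-V} and \eqref{V-U} are indeed equivalent and extract the key consequence. Substituting $\mb{U}=(1+M(\mb{V}))\mb{V}$ into the R-M-formula \eqref{R-M-formula} gives immediately
\[
R(\mb{U}) = M(\mb{V}),
\]
so $1+R(\mb{U}) = 1+M(\mb{V})$, and then $(1+R(\mb{U}))^{-1}\mb{U} = (1+M(\mb{V}))^{-1}(1+M(\mb{V}))\mb{V} = \mb{V}$, which is \eqref{V-U}. The reverse implication is the same computation read backwards. Combined with \eqref{K-M-formula}, this yields
\[
1 - \eta(\mb{V}) = (1+M(\mb{V}))^{-1} = (1+R(\mb{U}))^{-1}.
\]

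With these ingredients the main identity is essentially a one-line computation. Using that $(1+M(\mb{V}))^{-1}$ is a power series in $\mb{z}$ only and hence commutes with each $x_i$,
\[
\begin{split}
(1-\eta(\mb{V}))\bigl(1 - \mb{x}\cdot\mb{U} + R(\mb{U})\bigr)
&= (1+M(\mb{V}))^{-1}\bigl((1+M(\mb{V})) - \mb{x}\cdot\mb{U}\bigr) \\
&= 1 - \mb{x}\cdot\bigl((1+M(\mb{V}))^{-1}\mb{U}\bigr) \\
&= 1 - \mb{x}\cdot\mb{V},
\end{split}
\]
where the last step uses \eqref{V-U} in the form $(1+M(\mb{V}))^{-1}\mb{U} = (1+R(\mb{U}))^{-1}\mb{U} = \mb{V}$. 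Multiplying on the left by $(1-\mb{x}\cdot\mb{V})^{-1}$ and on the right by $(1-\mb{x}\cdot\mb{U}+R(\mb{U}))^{-1}$ gives the claimed identity.

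The only point that needs care is the distinction between scalar commutativity (between $\mb{x}$ and $\mb{z}$) and non-commutativity within each family, since the R-M-formula has a left and a right version and one must make sure $\mb{x}$ can indeed be pushed past the coefficient $(1+M(\mb{V}))^{-1}$. I expect no real obstacle: the setup of Proposition~\ref{Prop:Appell-expansions}(a), where $(1-\mb{x}\cdot\mb{z})^{-1}(1+M(\mb{z}))^{-1}$ already mixes the two kinds of indeterminates in exactly this way, justifies the manipulation. Everything else is a direct substitution from \eqref{K-M-formula} and \eqref{R-M-formula}.
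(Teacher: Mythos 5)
Your proof is correct and follows essentially the same route as the paper: both arguments reduce to $R(\mb{U})=M(\mb{V})$ via \eqref{R-M-formula}, use \eqref{K-M-formula} in the form $1-\eta(\mb{V})=(1+M(\mb{V}))^{-1}$, and exploit the commutativity of the $x_i$ with power series in $\mb{z}$ to pull $(1+M(\mb{V}))^{-1}$ across $\mb{x}\cdot\mb{U}$. The only (cosmetic) difference is that the paper factors and inverts $1-\mb{x}\cdot\mb{U}+R(\mb{U})=(1+R(\mb{U}))\bigl(1-(1+R(\mb{U}))^{-1}(\mb{x}\cdot\mb{U})\bigr)$ directly, whereas you verify the equivalent un-inverted identity and then invert at the end.
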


\begin{proof}
We first note that an application of the identity~\eqref{R-M-formula} shows that equation~\eqref{U-V} is equivalent to
\[
R(\mb{U}) = M(\mb{V}),
\]
which is equivalent to \eqref{V-U} since
\[
V_i = (1 + M(\mb{V}))^{-1} (1 + M(\mb{V})) V_i = (1 + R(\mb{U}))^{-1} (1 + M(\mb{V})) V_i.
\]
Therefore
\[
\begin{split}
\Bigl( 1 - \mb{x} \cdot \mb{U} + R(\mb{U}) \Bigr)^{-1}
& = \Bigl( (1 + R(\mb{U})) \left( 1 - (1 + R(\mb{U}))^{-1} (\mb{x} \cdot \mb{U}) \right) \Bigr)^{-1} \\
& = \Bigl( 1 - (1 + R(\mb{U}))^{-1} (\mb{x} \cdot \mb{U}) \Bigr)^{-1} (1 + R(\mb{U}))^{-1} \\
& = \Bigl( 1 - (1 + M(\mb{V}))^{-1} (\mb{x} \cdot (1 + M(\mb{V})) \mb{V}) \Bigr)^{-1} (1 + M(\mb{V}))^{-1} \\
& = \left( 1 - \mb{x} \cdot \mb{V} \right)^{-1} (1 - \eta(\mb{V})). \qedhere
\end{split}
\]
\end{proof}

\br
It follows that the free and Boolean Meixner polynomial families coincide, and therefore so do the classes of free and Boolean Meixner states. Recall the characterizations of the free Meixner states. Theorem~3 of \cite{AnsFree-Meixner} described such states on $\mf{R} \langle \mb{x} \rangle$. The following proposition is the corresponding result for states on $\mf{C} \langle \mb{x} \rangle$. Its proof is a straightforward translation.

\begin{Prop}
\label{Prop:Free-Meixner}
Let $\phi$ be a state on $\mf{C} \langle \mb{x} \rangle$ with a monic orthogonal polynomial system (MOPS), zero means and identity covariance. $\phi$ is a free Meixner state if and only if any one of the following equivalent conditions holds.
\begin{enumerate}
\item
The polynomials with the generating function
\begin{equation*}
\sum_{\abs{\vec{u}} \geq 0} P_{\vec{u}}(\mb{x}) z_{\vec{u}} = \Bigl( 1 - \mb{x} \cdot \mb{U}(\mb{z}) + R(\mb{U}(\mb{z})) \Bigr)^{-1}
\end{equation*}
for some $\mb{U}$ are a MOPS for $\phi$, where $R$ is the free cumulant generating function of $\phi$. In this case necessarily
\[
(D_i R)(\mb{U}(\mb{z})) = z_i,
\]
so that $\mb{U} = (\mb{D} R)^{\langle -1 \rangle}$.
\item
There exist Hermitian $d \times d$ matrices $T_i$ and a diagonal $d^2 \times d^2$ matrix $C$ with $I + C \geq 0$ and $(T_i \otimes I) C = C (T_i \otimes I)$ such that $\phi$ has a representation $\phi_{C, \set{T_i}}$ as a Fock state in the sense of Proposition~\ref{Prop:Monic-states}. Here $\mc{C}^{(1)} = I$, $\mc{C}^{(k)} = C \otimes I^{\otimes (k-2)}$ for $k \geq 2$, $\mc{T}_i^{(0)} = 0$, $\mc{T}_i^{(k)} = T_i \otimes I^{\otimes (k-1)}$ for $k \geq 1$.
\item
Denoting the entries of the matrices $T_k, C$ by $B^k_{ij}, C_{ij}$, respectively, the free cumulant generating function of $\phi$ satisfies, for each $i, j$, a (non-commutative) second-order partial differential equation
\begin{equation}
\label{PDE}
D_i D_j R(\mb{z}) = \delta_{ij} + \sum_{k=1}^d B_{ij}^k D_k R(\mb{z}) + C_{ij} D_i R(\mb{z}) D_j R(\mb{z}).
\end{equation}
\item
There is a family of polynomials $\set{P_{\vec{u}}}$ such that $\state{P_{\vec{u}}} = 0$ for all $\vec{u} \neq \emptyset$ and they satisfy a recursion relation
\begin{align*}
x_i & = P_i, \\
x_i P_{j} &= P_{(i,j)} + \sum_{k=1}^d B_{ij}^{k} P_{k} + \delta_{ij}, \\
x_i P_{(j, \vec{u})} &= P_{(i, j, \vec{u})} + \sum_{k=1}^d B_{ij}^{k} P_{(k, \vec{u})} + \delta_{ij} (1 + C_{i, u(1)}) P_{\vec{u}}.
\end{align*}
\end{enumerate}
\end{Prop}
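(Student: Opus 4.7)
The plan is to adapt the argument from Theorem~3 of \cite{AnsFree-Meixner}, which establishes these equivalences for states on $\mf{R}\langle \mb{x}\rangle$. The only substantive modification is to track the complex $\ast$-structure: in place of real symmetric matrices, $T_i$ becomes Hermitian and $C$ becomes Hermitian with the commutation $(T_i \otimes I) C = C (T_i \otimes I)$. As a preliminary step I would verify that since each $x_i$ is self-adjoint and $\phi$ is $\ast$-compatible, the free cumulants satisfy $\FreeCum{x_{(\vec u)^{op}}} = \overline{\FreeCum{x_{\vec u}}}$, so that the matrix entries appearing in conditions (c) and (d) automatically inherit the correct Hermiticity, and so that the self-adjointness of $\mb{x}$ is compatible with the Fock construction in (b).

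For (a) $\Leftrightarrow$ (c), the plan is to apply $D_i$ to the generating function identity in (a) using Lemma~\ref{Lemma:Derivatives}(a), producing a relation of the form $D_i F = F \cdot \bigl(U_i - (D_i R)(\mb U)\bigr) + \ldots$. The requirement that the $P_{\vec u}$ be monic with leading term $x_{\vec u}$ forces $(D_i R)(\mb U) = z_i$, i.e.\ $\mb U = (\mb D R)^{\langle -1\rangle}$, and the orthogonality constraint $\phi(P_{\vec u} P_{\vec v}^\ast) = 0$ for $\vec u \neq \vec v$ controls which lower-order terms appear when multiplying $P_{\vec u}$ by $x_i$. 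Reading this constraint back on the side of $R$ via \eqref{R-M-formula} yields precisely the quadratic PDE \eqref{PDE}. The equivalence (c) $\Leftrightarrow$ (d) then follows by translating the PDE into a recursion for the orthogonal polynomials, identifying the coefficients $B_{ij}^k$ and $C_{ij}$ with the relevant partial derivatives of $R$ at $\mb z = 0$.

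For (b) $\Leftrightarrow$ (d), the plan is the standard Fock-state construction: using the recursion data one defines creation, gauge, and annihilation operators on the algebraic full Fock space, with the $\mc T_i^{(k)}$ providing the gauge part on the $k$-th level and $\mc C^{(k)}$ twisting the inner product. Applying $x_i$ to ``Fock-polynomial'' vectors and expanding reproduces the three-term recursion in (d). I expect the main obstacle to be verifying positivity and self-adjointness on the twisted Fock space: the inner product on each level is positive semi-definite precisely when $I + C \geq 0$, while the commutation $(T_i \otimes I) C = C (T_i \otimes I)$ is exactly what is needed to make each $X_i$ symmetric for this twisted inner product. The subtlety, compared to the real case, is that these become genuine Hermiticity conditions on complex matrices, and one must check that the $\ast$-compatibility of $\phi$ on $\mf{C}\langle\mb{x}\rangle$ is equivalent to—rather than merely implied by—these Hermiticity and positivity conditions, so that the translation from \cite{AnsFree-Meixner} is in fact reversible.
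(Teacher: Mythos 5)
Your proposal matches the paper's approach: the paper offers no independent argument here, stating only that the result is ``a straightforward translation'' of Theorem~3 of \cite{AnsFree-Meixner} to states on $\mf{C}\langle\mb{x}\rangle$, which is precisely the adaptation you outline (your sketch of the individual equivalences is in fact more detailed than what the paper records). One small point: the statement takes $C$ to be \emph{diagonal}, not merely Hermitian, so the only genuinely new bookkeeping in the complex case concerns the Hermiticity of the $T_i$ and the $\ast$-compatibility of $\phi$, exactly as you anticipate.
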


\br
We now show that, while the Boolean and free Meixner states coincide, and so the free characterizations above hold for the Boolean Meixner states, the Boolean versions of these properties hold as well.

\begin{Prop}
\label{Prop:Boolean-Meixner}
The following are equivalent to the conditions in Propositions~\ref{Prop:Free-Meixner}.
\begin{enumerate}
\item
The polynomials with the generating function
\[
\left( 1 - \mb{x} \cdot \mb{V} \right)^{-1} (1 - \eta(\mb{V}))
\]
are a MOPS for $\phi$, where
\[
(D_i K)(\mb{V}(\mb{z})) = z_i.
\]
so that $\mb{V} = (\mb{D} K)^{\langle -1 \rangle}$.
\item
The Boolean cumulant generating function of $\phi$ satisfies, for each $i, j$,
\[
D_i D_j \eta(\mb{w}) = \delta_{ij} + \sum_k B_{ij}^k D_k \eta(\mb{w}) + (1 + C_{ij}) D_i \eta(\mb{w}) D_j \eta(\mb{w}).
\]
\end{enumerate}
\end{Prop}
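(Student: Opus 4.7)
\textbf{Proof plan for Proposition~\ref{Prop:Boolean-Meixner}.}
My plan is to derive each Boolean condition from the corresponding free condition of Proposition~\ref{Prop:Free-Meixner} by transporting the statement through the generating-function identity of Proposition~\ref{Prop:Free-Boolean-change} and the derivative identities of Lemma~\ref{Lemma:Derivatives}. In other words, the proposition should not require anything new about $\phi$; it is a pure translation between the free and Boolean languages, made possible because Proposition~\ref{Prop:Free-Boolean-change} showed the Sheffer classes coincide.

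For part (a), I would argue as follows. By Proposition~\ref{Prop:Free-Boolean-change} the polynomials with generating function $(1-\mb{x}\cdot\mb{V})^{-1}(1-\eta(\mb{V}))$ are literally the same polynomials as those with generating function $(1-\mb{x}\cdot\mb{U}+R(\mb{U}))^{-1}$ once $\mb{U},\mb{V}$ are linked by \eqref{U-V}--\eqref{V-U}. So they form a MOPS if and only if the free Sheffer family does, which by Proposition~\ref{Prop:Free-Meixner}(a) is equivalent to $(D_jR)(\mb{U}(\mb{z}))=z_j$. The remaining task is to show that this is the same relation as $(D_j\eta)(\mb{V}(\mb{z}))=z_j$, which is exactly the claim that $\mb{V}=(\mb{D}\eta)^{\langle-1\rangle}$ (reading the ``$K$'' in the statement as the Boolean cumulant generating function $\eta$). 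This identification is a direct consequence of Lemma~\ref{Lemma:Derivatives}(b) applied to the change of variables \eqref{U-V}, combined with Lemma~\ref{Lemma:Derivatives}(c), which together give the single identity $(D_jR)(\mb{U})=(D_j\eta)(\mb{V})$.

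For part (b), I want to differentiate the identity $(D_jR)(\mb{U})=(D_j\eta)(\mb{V})$ once more. Applying $D_i$ to the left-hand side, using the non-commutative product/chain rule of Lemma~\ref{Lemma:Derivatives}(a) together with the explicit formula $\mb{U}=(1+M(\mb{V}))\mb{V}$, I can rewrite $(D_iD_jR)(\mb{U})$ as a combination of $(D_iD_j\eta)(\mb{V})$ and a correction proportional to $D_i\eta(\mb{V})\cdot D_j\eta(\mb{V})$ (coming from differentiating the factor $(1+M(\mb{V}))^{-1}$ that appears when passing between $\mb{U}$ and $\mb{V}$). Substituting the free PDE \eqref{PDE} for the left-hand side and using Lemma~\ref{Lemma:Derivatives}(c) to rewrite $D_kR(\mb{U})=D_k\eta(\mb{V})$ on the right, the Kronecker term stays $\delta_{ij}$, the term with the coefficients $B^k_{ij}$ carries over unchanged, and the free coefficient $C_{ij}$ combined with the correction from the chain rule produces the Boolean coefficient $(1+C_{ij})$. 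This gives the Boolean PDE, and the derivation is reversible step by step, yielding the converse.

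The main obstacle will be the non-commutative bookkeeping in the second paragraph: because $D_i$ is a \emph{left} derivative and $\mb{U},\mb{V}$ are tuples of non-commuting series, the chain rule is not a one-line computation, and one must be careful about the order in which factors like $(1+M(\mb{V}))$ and $(1+M(\mb{V}))^{-1}$ appear when differentiating. The cleanest path is probably to differentiate the relation $R(\mb{U})=M(\mb{V})$ (equivalently $1-\eta(\mb{V})=(1+R(\mb{U}))^{-1}$) twice using Lemma~\ref{Lemma:Derivatives}(a), then isolate $(D_iD_j\eta)(\mb{V})$ and $(D_iD_jR)(\mb{U})$ and match the two expressions, so that the $1\mapsto 1+C_{ij}$ shift emerges transparently from the single factor of $(1+M(\mb{V}))^{-1}$ one has to move across.
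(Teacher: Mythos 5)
Your overall strategy --- transporting the free conditions of Proposition~\ref{Prop:Free-Meixner} through the change of variables of Proposition~\ref{Prop:Free-Boolean-change} --- is exactly the paper's, but two of your concrete steps do not work as stated. First, for part (a): the identity $(D_jR)(\mb{U})=(D_j\eta)(\mb{V})$ is in fact true for the substitution \eqref{U-V}, but it is \emph{not} a direct consequence of Lemma~\ref{Lemma:Derivatives}(b),(c). Lemma~\ref{Lemma:Derivatives}(b) concerns the substitution $z_i=w_i(1+M(\mb{w}))$, with the series factor on the \emph{right}, whereas \eqref{U-V} puts $(1+M(\mb{V}))$ on the \emph{left}; combining (b) and (c) literally yields $(D_jR)(\mb{V}(1+M(\mb{V})))=(1+M(\mb{V}))^{-1}\,(D_j\eta)(\mb{V})\,(1+M(\mb{V}))$, a conjugated identity, not the clean one. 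To get your identity you need a separate argument, e.g.\ the one the paper uses: pass to the integrated forms via $F(\mb{w})=\sum_i w_i (D_iF)(\mb{w})$, so that $(D_iR)(\mb{U})=z_i$ becomes $R(\mb{U})=\sum_i U_i z_i$, then use $R(\mb{U})=M(\mb{V})$ and $U_i=(1+M(\mb{V}))V_i$ to factor $(1+M(\mb{V}))$ out on the left and land on $\eta(\mb{V})=\sum_i V_i z_i$, i.e.\ $(D_i\eta)(\mb{V})=z_i$.

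The more serious gap is in part (b). You propose to ``apply $D_i$'' to $(D_jR)(\mb{U}(\mb{z}))=(D_j\eta)(\mb{V}(\mb{z}))$, or to ``differentiate $R(\mb{U})=M(\mb{V})$ twice using Lemma~\ref{Lemma:Derivatives}(a)''; but Lemma~\ref{Lemma:Derivatives}(a) is only a Leibniz rule, and there is no chain rule for the left non-commutative derivative of a composite $F(\mb{U}(\mb{z}))$. So the operation your plan hinges on is simply not available, and the ``correction proportional to $D_i\eta\,D_j\eta$'' cannot be produced this way. The paper's essential idea, which is missing from your proposal, is to avoid differentiating composites altogether: multiply \eqref{PDE} by $z_i$ and sum over $i$, reducing the second-order PDE to the first-order identity $D_jR(\mb{z})=z_j+\sum_{k,i}B^k_{ij}z_iD_kR+\sum_i C_{ij}z_iD_iR\,D_jR$; substitute $z_i=w_i(1+M(\mb{w}))$ term by term using Lemma~\ref{Lemma:Derivatives}(b),(c); right-multiply by $(1+M(\mb{w}))^{-1}$ to convert every $D_kM(1+M)^{-1}$ into $D_k\eta$; and only then apply $D_i$ in the $\mb{w}$-variables, where the product rule together with $D_i(1+M(\mb{w}))^{-1}=-D_i\eta(\mb{w})$ produces exactly the shift $C_{ij}\mapsto 1+C_{ij}$. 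Your intuition about where the $+1$ comes from is right, but without the integrate--substitute--differentiate maneuver the computation cannot be carried out.
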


\begin{proof}
For part (a), we use the known relation in the free case and the change of variables in Proposition~\ref{Prop:Free-Boolean-change}:
\[
(D_i R)(\mb{U}) = z_i
\]
is equivalent to
\[
R(\mb{U}) = \sum_i U_i z_i,
\]
so that
\[
M(\mb{V}) = \sum_i (1 + M(\mb{V})) V_i z_i,
\]
and
\[
\eta(\mb{V}) = 1 - (1 + M(\mb{V}))^{-1} = (1 + M(\mb{V}))^{-1} M(\mb{V}) = \sum_i V_i z_i,
\]
which finally is equivalent to
\[
(D_i \eta)(\mb{V}) = z_i.
\]
For part (b), we start with the characterization
\[
D_i D_j R(\mb{z}) = \delta_{ij} + \sum_k B_{ij}^k D_k R(\mb{z}) + C_{ij} D_i R(\mb{z}) D_j R(\mb{z}).
\]
Multiplying by $z_i$ and summing over $i$,
\[
D_j R(\mb{z}) = z_j + \sum_{k, i} B_{ij}^k z_i D_k R(\mb{z}) + \sum_{i} C_{ij} z_i D_i R(\mb{z}) D_j R(\mb{z}).
\]
Substituting $z_i = w_i (1 + M(\mb{w}))$ and using Lemma~\ref{Lemma:Derivatives},
\[
\begin{split}
(1 + M(\mb{w}))^{-1} D_j M(\mb{w})
& = w_j (1 + M(\mb{w})) + \sum_{k, i} B_{ij}^k w_i (1 + M) (1 + M)^{-1} D_k M(\mb{w}) \\
&\quad + \sum_{i} C_{ij} w_i (1 + M) (1 + M)^{-1} D_i M(\mb{w}) (1 + M(\mb{w}))^{-1} D_j M(\mb{w}) \\
& = w_j (1 + M(\mb{w})) + \sum_{k, i} B_{ij}^k w_i D_k M(\mb{w}) \\
&\quad + \sum_{i} C_{ij} w_i D_i M(\mb{w}) (1 + M(\mb{w}))^{-1} D_j M(\mb{w}).
\end{split}
\]
Multiplying by $(1 + M(\mb{w}))^{-1}$ on the right, we get
\[
\begin{split}
& (1 + M(\mb{w}))^{-1} D_j M(\mb{w}) (1 + M(\mb{w}))^{-1} \\
&\qquad = w_j + \sum_{k, i} B_{ij}^k w_i D_k M(\mb{w}) (1 + M(\mb{w}))^{-1} \\
&\qquad\quad + \sum_{i} C_{ij} w_i D_i M(\mb{w}) (1 + M(\mb{w}))^{-1} D_j M(\mb{w}) (1 + M(\mb{w}))^{-1}
\end{split}
\]
so that, using Lemma~\ref{Lemma:Derivatives} again,
\[
(1 + M(\mb{w}))^{-1} D_j \eta(\mb{w}) = w_j + \sum_{k, i} B_{ij}^k w_i D_k \eta(\mb{w}) + \sum_{i} C_{ij} w_i D_i \eta(\mb{w}) D_j \eta(\mb{w}).
\]
Applying $D_i$, we finally get
\[
D_i(1 + M(\mb{w}))^{-1} D_j \eta(\mb{w}) + D_i D_j \eta(\mb{w}) = \delta_{ij} + \sum_k B_{ij}^k D_k \eta(\mb{w}) + C_{ij} D_i \eta(\mb{w}) D_j \eta(\mb{w}),
\]
or
\[
D_i D_j \eta(\mb{w}) = \delta_{ij} + \sum_k B_{ij}^k D_k \eta(\mb{w}) + (1 + C_{ij}) D_i \eta(\mb{w}) D_j \eta(\mb{w}). \qedhere
\]
\end{proof}

\subsection{Belinschi-Nica transformation}

Belinschi and Nica \cite{Belinschi-Nica-B_t,Belinschi-Nica-Free-BM} have considered a ``remarkable transformation''
\[
\mf{B}_t(\mu) = \left(\mu^{\boxplus(1+t)}\right)^{\uplus(1/(1+t))},
\]
which has a number of surprising properties, for example the relation to the multiplicative free convolution. A number of examples computed in the first of these papers are explained by the following proposition.

\begin{Prop}
\label{Prop:Free/Boolean}
\br
\begin{enumerate}
\item
The free / Boolean Meixner class is closed under free and Boolean convolution powers, and consequently under the operation $\mf{B}_t$. In fact,
\[
\mf{B}_t(\phi_{C, \set{T_i}}) = \phi_{tI + C, \set{T_i}}
\]
\item
Every one-dimensional free / Boolean Meixner distribution can be obtained from a Bernoulli distribution by the application of an appropriate $\mf{B}_t$.
\end{enumerate}
\end{Prop}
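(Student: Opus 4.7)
The plan is to prove part~(a) by directly tracking how the Meixner parameters transform under each of the two operations composing $\mf{B}_t$, using the PDE characterizations from Propositions~\ref{Prop:Free-Meixner}(c) and \ref{Prop:Boolean-Meixner}(b). The key preliminary observation is that the derivation in the proof of Proposition~\ref{Prop:Boolean-Meixner}(b) never uses the specific value $\delta_{ij}$ of the covariance: if $R$ satisfies
\[
D_iD_j R = s_{ij} + \sum_k B^k_{ij} D_k R + C_{ij} D_iR D_jR
\]
for an arbitrary symmetric $s_{ij}$, the same manipulations yield
\[
D_iD_j \eta = s_{ij} + \sum_k B^k_{ij} D_k \eta + (1+C_{ij}) D_i\eta D_j\eta,
\]
so the ``$+1$'' shift in the quadratic coefficient is independent of~$s$.

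Starting from $\phi = \phi_{C,\set{T_i}}$, whose $R_\phi$ satisfies the PDE with coefficients $(\delta_{ij}, B^k_{ij}, C_{ij})$: under $\boxplus(1+t)$, the free cumulant function scales as $R \mapsto (1+t) R$, and a one-line substitution shows the transformed function satisfies the PDE with coefficients $((1+t)\delta_{ij}, B^k_{ij}, C_{ij}/(1+t))$. Passing to the Boolean side via the extended translation above, $\eta_{\phi^{\boxplus(1+t)}}$ satisfies the Boolean PDE with coefficients $((1+t)\delta_{ij}, B^k_{ij}, 1 + C_{ij}/(1+t))$. Finally, $\uplus(1/(1+t))$ sends $\eta \mapsto \eta/(1+t)$; substituting and simplifying produces a Boolean PDE with coefficients $(\delta_{ij}, B^k_{ij}, (1+t)(1 + C_{ij}/(1+t))) = (\delta_{ij}, B^k_{ij}, 1+t+C_{ij})$. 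Reading this off against Proposition~\ref{Prop:Boolean-Meixner}(b) identifies $\mf{B}_t(\phi)$ with $\phi_{tI + C, \set{T_i\}}$; since each intermediate step stays within the (broader) class of states satisfying a Meixner-type quadratic PDE, this simultaneously establishes closure of the class under free and Boolean convolution powers. I note that the commutation relation $(T_i \otimes I) C = C (T_i \otimes I)$ and the positivity $I + C \ge 0$ are trivially preserved, since $C$ is modified only by addition of a scalar multiple of $I$.

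For part~(b), in $d=1$ the Meixner class is parameterized by pairs $(b, c) \in \mf{R}^2$ with $c \ge -1$; write $\phi_{c, b}$ for the corresponding state. A direct calculation shows that a centered, unit-variance two-atom (Bernoulli) distribution with atoms $\alpha, \beta$ satisfying $\alpha + \beta = \lambda$, $\alpha \beta = -1$ has Cauchy transform $G(z) = (z - \lambda)/(z^2 - \lambda z - 1)$, hence Boolean cumulant transform $K(z) = 1/(z - \lambda)$ and $\eta(z) = z^2/(1 - \lambda z)$. Plugging $D\eta = z/(1-\lambda z)$ and $D^2\eta = 1/(1-\lambda z)$ into the $1$-variable Boolean Meixner PDE forces $b = \lambda$ and $1 + c = 0$, so the Bernoulli distributions are exactly $\set{\phi_{-1, b} : b \in \mf{R}}$. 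Given any $\phi_{c, b}$ with $c \ge -1$, part~(a) then gives $\mf{B}_{1+c}(\phi_{-1, b}) = \phi_{(1+c) + (-1), b} = \phi_{c, b}$. The main technical content of the argument is the routine but careful bookkeeping of the coefficient transformation in part~(a); once the extension of Proposition~\ref{Prop:Boolean-Meixner}(b) to arbitrary covariance is in hand, everything else reduces to elementary substitution.
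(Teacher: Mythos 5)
Your proof is correct, but it travels a genuinely different road from the paper's. You work entirely with the differential-equation characterizations (Proposition~\ref{Prop:Free-Meixner}(c) and Proposition~\ref{Prop:Boolean-Meixner}(b)), first noting that the free-to-Boolean translation of the quadratic PDE is insensitive to the covariance term $s_{ij}$, and then tracking how the three coefficients transform under the scalings $R \mapsto (1+t)R$ and $\eta \mapsto \eta/(1+t)$; the bookkeeping $(1+t)\bigl(1 + C_{ij}/(1+t)\bigr) = 1 + t + C_{ij}$ checks out, and the identification of $\mf{B}_t(\phi)$ with $\phi_{tI+C,\set{T_i}}$ is legitimate because the PDE together with the zero-mean normalization determines $\eta$, hence the state, recursively. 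The paper instead works with the Jacobi-type recursion coefficients (condition (d) of Proposition~\ref{Prop:Free-Meixner}): it quotes the known effect of $\boxplus t$ on the MOPS recursion from \cite{AnsFree-Meixner} and invokes Corollary~\ref{Cor:Boolean-semigroup} -- itself a consequence of the multivariate continued fraction expansion of Theorem~\ref{Thm:Continued-fraction} -- to see that $\uplus t$ multiplies only the first-level coefficients $\mc{T}_i^{(0)}$ and $\mc{C}^{(1)}$ by $t$. Your route avoids the appendix machinery altogether and is more self-contained, at the modest cost of having to justify the extension of Proposition~\ref{Prop:Boolean-Meixner}(b) to non-identity covariance (which does go through verbatim, as you say, since the $+1$ shift arises from the $D_i(1+M)^{-1}$ term and not from $\delta_{ij}$); the paper's route is more informative in that it explicitly exhibits the orthogonal polynomial recursions of the intermediate states $\phi^{\boxplus(1+t)}$ and $\phi^{\uplus t}$, and it showcases the general Boolean fact (linearity of Jacobi coefficients in the Boolean convolution parameter) that is one of the themes of the paper. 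Your part~(b) is essentially the paper's argument, with the welcome addition of an explicit verification via the Cauchy transform that the Bernoulli laws \eqref{Binomial} are exactly the $c=-1$ members of the class.
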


\begin{proof}
Let $\phi = \phi_{C, \set{T_i}}$ be a free Meixner state. As shown in Section~3.1 of \cite{AnsFree-Meixner}, the MOPS for $\phi^{\boxplus t}$ satisfy the recursion relations
\begin{align*}
x_i & = P_i, \\
x_i P_j & = P_{(i, j)} + \sum_{k=1}^d B_{ij}^k P_{k} + \delta_{ij} t, \\
x_i P_{(j, \vec{u})} & = P_{(i, j, \vec{u})} + \sum_{k=1}^d B_{ij}^k P_{(k, \vec{u})} + \delta_{ij}(t + C_{i, u(1)}) P_{\vec{u}}.
\end{align*}
which shows that $\phi^{\boxplus t}$ is a dilated free Meixner state (with covariance $t I$). On the other hand, it follows from Corollary~\ref{Cor:Boolean-semigroup} that the MOPS for $\phi^{\uplus t}$ satisfy the recursions relations
\begin{align*}
x_i & = P_i, \\
x_i P_j & = P_{(i, j)} + \sum_{k=1}^d B_{ij}^k P_{k} + \delta_{ij} t, \\
x_i P_{(j, \vec{u})} & = P_{(i, j, \vec{u})} + \sum_{k=1}^d B_{ij}^k P_{(k, \vec{u})} + \delta_{ij}(1 + C_{i, u(1)}) P_{\vec{u}},
\end{align*}
so $\phi^{\uplus t}$ is also a dilated free Meixner state. Finally, it follows that $\mf{B}_t(\phi) = \left(\phi^{\boxplus(1+t)}\right)^{\uplus(1/(1+t))}$ also has a MOPS, and they satisfy recursion relations
\begin{align*}
x_i & = P_i, \\
x_i P_j & = P_{(i, j)} + \sum_{k=1}^d B_{ij}^k P_{k} + \delta_{ij}, \\
x_i P_{(j, \vec{u})} & = P_{(i, j, \vec{u})} + \sum_{k=1}^d B_{ij}^k P_{(k, \vec{u})} + \delta_{ij}(1 + t + C_{i, u(1)}) P_{\vec{u}},
\end{align*}
so that $\mf{B}_t(\phi) = \phi_{tI + C, \set{T_i}}$, which completes the proof of (a). For (b), we note that in the one-dimensional case,
\[
\mf{B}_{(1+c)} \phi_{-1, b} = \phi_{c, b}
\]
and the free Meixner distributions with $c = -1$ are exactly the Bernoulli distributions, specifically
\begin{equation}
\label{Binomial}
\frac{1}{1 + \beta^2} \delta_{\beta} + \frac{\beta^2}{1 + \beta^2} \delta_{-1/\beta}
\end{equation}
with $b = \beta - 1/\beta$.
\end{proof}

\begin{Remark}
The same argument shows that every ``simple quadratic'' free / Boolean Meixner state (a class of states with $C_{ij} \equiv c$, investigated at the end of \cite{AnsFree-Meixner}) can be obtained by the application of an appropriate $\mf{B}_t$ from a free Meixner state with $C = -I$. Moreover, it is easy to deduce from Proposition~9 of \cite{AnsFree-Meixner} that a tracial state of the form $\phi_{-I, \set{T_i}}$ factors through to a multinomial distribution on polynomials in commuting variables. Unfortunately, Boolean convolution, and so the multivariate version of $\mf{B}_t$, does not preserve the trace property.

\br
On the other hand, any tracial ``simple quadratic'' free Meixner state with $c < 0$ (and, in the limit, with $c=0$) can be obtained from a multinomial distribution by a free convolution power with $t = (-1/c)$ followed by a dilation by $\frac{1}{\sqrt{t}}$.
\end{Remark}

\begin{Remark}[Bercovici-Pata bijection]
\label{Remark:BP-bijection}
The classes of distributions infinitely divisible in the classical, free, and Boolean sense are all isomorphic, and (in the one-dimensional case) the corresponding measures have the same domains of attraction \cite{BerPatDomains}. In the multivariate combinatorial case we are considering, the Boolean-to-free correspondence is simply
\[
\phi \mapsto \psi, \qquad \text{ where } \eta_\phi(\mb{z}) = R_\psi(\mb{z})
\]
(of course, the key issue is the infinite divisibility of $\phi, \psi$, in the state rather than a linear functional sense). In general, there is no simple formula for this correspondence (but see \cite{Belinschi-Nica-Eta}). However, it follows from part (b) of Proposition~\ref{Prop:Boolean-Meixner} that this bijection takes the free / Boolean Meixner class to itself, and maps
\[
\phi_{C, \set{T_i}} \mapsto \phi_{I + C, \set{T_i}}.
\]
Note that all free Meixner states are infinitely divisible in the Boolean sense, and the ones infinitely divisible in the free sense are exactly those for $I + C \geq 0$.
\end{Remark}

\begin{Ex}[Free and Boolean product states]
\label{Example:Product-states}
Let $\phi_i$, $i=1, 2, \ldots, d$ be one-dimensional states (measures), whose monic orthogonal polynomials satisfy the recursions
\[
x_i P^{(i)}_n(x_i) = P^{(i)}_{n+1}(x_i) + \beta^{(i)}_n P^{(i)}_n(x_i) + \gamma^{(i)}_n P^{(i)}_{n-1}(x_i)
\]
Their free product state
\[
\phi = \phi_1 \ast \phi_2 \ast \ldots \ast \phi_d,
\]
has a MOPS, which satisfy recursion relations with the following coefficients: for $u(1) \neq i$ and $k\geq 0$,
\[
B^i_{(i^k, \vec{u}), (i^k, \vec{u})} = \beta^{(i)}_k
\]
(where in this example only, $i^k$ denotes $(i, i, \ldots, i)$ repeated $k$ times) and the rest zero, and for $k \geq 1$,
\[
\mc{C}_{(i^k, \vec{u})} = \gamma^{(i)}_k.
\]
In particular, if each $\phi_j$ is a free Meixner state, so that $\beta_0 = 0$, $\gamma_1 = 1$,
\[
\beta_1 = \beta_2 = \ldots = \beta
\]
and
\[
\gamma_2 = \gamma_3 = \ldots = \gamma,
\]
then
\[
B^i_{(j, \vec{u}), (k, \vec{v})} = \delta_{ijk} \delta_{\vec{u}, \vec{v}} \beta^{(i)}
\]
and
\[
\mc{C}_{(i, j, \vec{u})} =
\begin{cases}
\gamma^{(i)}, & i = j, \\
1, & i \neq j,
\end{cases}
\]
so that $C_{ij} = \delta_{ij} (\gamma^{(i)} - 1)$ and $\phi$ is also a free Meixner state.

\br
The Boolean product
\[
\psi = \phi_1 \odot \phi_2 \odot \ldots \odot \phi_d
\]
also has a MOPS, which satisfy recursion relations with coefficients
\[
B^i_{i^k, i^k} = \beta^{(i)}_k,
\]
\[
\mc{C}_{i^k} = \gamma^{(i)}_k
\]
and the rest zero. More precisely (see the proof of Theorem~2 of \cite{AnsMonic}) $\mc{C}_{(i, j^k)} = 0$ for $i \neq j$, and other $\mc{C}_{(\vec{u}, j^k)}$ and $B^i_{(\vec{u}, j^k), \vec{v}}$ for which some $u(i) \neq j$ can be defined arbitrarily. For the product of free Meixner states, we can therefore set
\[
B^i_{(j, \vec{u}), (k, \vec{v})} = \delta_{ijk} \delta_{\vec{u}, \vec{v}} \beta^{(i)}
\]
and
\[
\mc{C}_{(i, j, \vec{u})} = \delta_{ij} \gamma^{(i)},
\]
so that $C_{ij} = \delta_{ij} \gamma^{(i)} - 1$, and $\psi$ is a free Meixner state. Note that this is consistent with the results in Remark~\ref{Remark:BP-bijection}, since that bijection takes Boolean products to free products.
\end{Ex}

\subsection{Conditional freeness}

Another place where free Meixner distributions appear is the theory of conditional freeness \cite{BLS96}. The objects in this theory are algebras with pairs of functionals on them. Conditionally free product induces a convolution on pairs of measures,
\[
(\mu, \nu) = (\mu_1, \nu_1) \boxplus (\mu_2, \nu_2),
\]
so that $\nu = \nu_1 \boxplus \nu_2$ and $\mu$ is determined via
\[
C_{(\mu, \nu)}(z) = C_{(\mu_1, \nu_1)}(z) + C_{(\mu_2, \nu_2)}(z),
\]
where $C_{(\mu, \nu)}$ is a formal power series determined by
\[
C \Bigl[z(1 + M_\nu(z)) \Bigr] (1 + M_\mu(z)) = M_\mu(z) (1 + M_\nu(z)).
\]
Note that if $\nu_1 = \nu_2 = \delta_0$, then $C(z) = \eta_{\mu}(z)$ so $\mu = \mu_1 \uplus \mu_2$, and if $\mu_1 = \nu_1$, $\mu_2 = \nu_2$ then $C(z) = R_{\mu}(z)$ so that $\mu = \mu_1 \boxplus \mu_2$.

\br
In the central and Poisson limit theorems for the conditionally free convolution, the (first components of the) limiting distributions are free Meixner distributions (see also \cite{Bryc-Wesolowski-Bi-Poisson} for a related result). Indeed, these theorems involve measures for which $C_{(\mu, \nu)}(z)$ is equal to $R_\nu(z)$ ($=z^2$ in the central and $=\frac{z^2}{1-z}$ in the Poisson cases), or more generally is a constant multiple of it. In the first case
\[
R_\nu \Bigl[z(1 + M_\nu(z)) \Bigr] (1 + M_\mu(z)) = M_\nu(z) (1 + M_\mu(z)) = M_\mu(z) (1 + M_\nu(z)),
\]
so that $\mu = \nu = $ the semicircle law in the central and the Marchenko-Pastur law in the Poisson limit theorem. Note that both of these laws are free Meixner. In the more general case when
\[
C_{(\mu, \nu)}(z) = \alpha R_\nu(z),
\]
we get
\[
\alpha M_\nu(z) (1 + M_\mu(z)) = M_\mu(z) (1 + M_\nu(z)),
\]
whence
\[
\eta_\mu(z) = \alpha \eta_{\nu}(z),
\]
so that
\[
\mu = \nu^{\uplus \alpha}.
\]
It remains to note that Boolean convolution powers of the semicircle or the Marchenko-Pastur law are free Meixner. Indeed, it follows that if $C_{(\mu, \nu)}(z) = \alpha R_\nu(z)$ and $\nu$ is free Meixner, then so is $\mu$.

\br
Similar calculations explain the appearance of the free Meixner laws as limit laws in \cite{Boz-Wys} and \cite{Krystek-Yoshida-t}. Further properties of the Appell and Sheffer-type objects in the theory of conditional freeness will be explored in a future paper.

\subsection{Laha-Lukacs property}

Laha and Lukacs \cite{Laha-Lukacs} proved that the classical Meixner distributions are characterized by a quadratic regression property. Bo{\.z}ejko and Bryc \cite{Boz-Bryc} proved that the identical property, in the context of free probability, characterizes the free Meixner distributions. We now show that in the Boolean theory, this characterization fails. Instead, the Boolean Laha-Lukacs property characterizes only the Bernoulli distributions. Notice that these can also be interpreted as the Boolean versions of the Poisson distributions, with the symmetric Bernoulli distribution being the analog of the normal. Note also that, while conditional expectations in general may not exist, the expressions below are well-defined. Denote
\[
\mathrm{Var} \left[X | \mc{B} \right] = \State{\bigl(X - \State{X | \mc{B}}\bigr)^2 | \mc{B}}.
\]
In the non-tracial case, this need not equal $\State{X^2 | \mc{B}} - \State{X | \mc{B}}^2.$

\begin{Prop}
Suppose $X, Y$ are Boolean independent (with respect to $\Phi$), self-adjoint, non-degenerate and there are numbers $\alpha, \alpha_0, C, a, b \in \mf{R}$ such that
\[
\State{X | X + Y} = \alpha(X + Y) + \alpha_0
\]
and
\begin{equation}
\label{Quadratic-regression}
\mathrm{Var}\left[X | X + Y \right] = C \Bigl(1 + a(X + Y) + b(X + Y)^2 \Bigr).
\end{equation}
Then $X, Y$ have Bernoulli distributions.
\end{Prop}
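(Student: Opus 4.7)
The plan is to translate the two regression hypotheses into functional equations for the Boolean cumulant generating functions $\eta_X(z), \eta_Y(z)$ of $X$ and $Y$, exploit the additivity $\eta_{X+Y}=\eta_X+\eta_Y$ coming from Boolean independence, and extract the Bernoulli conclusion from the resulting rigid rational structure. Shifting $X\mapsto X-\state{X}$ and $Y\mapsto Y-\state{Y}$ preserves Boolean independence and simply absorbs constants into $\alpha_0,a,b,C$, so I may assume $\state{X}=\state{Y}=0$, which forces $\alpha_0=0$. Write $\sigma^2=\state{X^2}$, $\tau^2=\state{Y^2}$, $S=X+Y$. The characterization~\eqref{Projection} of the conditional expectation converts the hypotheses into the moment identities
\[
\state{S^n X}=\alpha\,\state{S^{n+1}},\qquad \state{S^n(X-\alpha S)^2}=C\bigl(\state{S^n}+a\,\state{S^{n+1}}+b\,\state{S^{n+2}}\bigr)
\]
for every $n\geq 0$.

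The key combinatorial step computes the mixed-moment generating functions via Boolean independence. Expanding $S^n=(X+Y)^n$ as a sum of alternating words in $X,Y$ and applying the factorization~\eqref{Boolean-independence}, every word in which $X$ or $Y$ appears as an isolated length-one run contributes zero because $\state{X}=\state{Y}=0$; what remains is a geometric sum giving
\[
\sum_{n\geq 0} z^n\state{S^n X}=\frac{M_X(z)\bigl(1+M_Y(z)\bigr)}{z\bigl(1-M_X(z)M_Y(z)\bigr)},
\]
and, after checking that $\state{S^n XY}=0$, the analogous closed forms $\sum z^n\state{S^n X^2}=\sum z^n\state{S^n XS}=(M_X/z^2)(1+M_Y)/(1-M_XM_Y)$. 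Combined with the Boolean independence identity $1+M_S=(1+M_X)(1+M_Y)/(1-M_XM_Y)$ (equivalent to $\eta_S=\eta_X+\eta_Y$) and the relation $M/(1+M)=\eta$, the first hypothesis collapses to $\eta_X(z)=\alpha\,\eta_S(z)$; reading off $\alpha=\sigma^2/(\sigma^2+\tau^2)$ from the $z^2$ coefficient yields the clean proportionality
\[
\sigma^2\eta_Y(z)=\tau^2\eta_X(z).
\]

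Define $\psi(z):=\sigma^2/\eta_X(z)=\tau^2/\eta_Y(z)$, a Laurent-type series with $\psi(z)z^2\to 1$ as $z\to 0$. Then $M_X=\sigma^2/(\psi-\sigma^2)$, $M_Y=\tau^2/(\psi-\tau^2)$, and so $P:=(1+M_X)(1+M_Y)=\psi^2/[(\psi-\sigma^2)(\psi-\tau^2)]$ and $Q:=M_X+M_Y+2M_XM_Y=(\sigma^2+\tau^2)\psi/[(\psi-\sigma^2)(\psi-\tau^2)]$. Substituting the quadratic generating functions into the variance identity, the dominant left-hand side $\alpha(M_Y-M_X)+\alpha^2 Q$ reduces to $\alpha(1-\alpha)Q$ (using hypothesis 1 in the form $M_Y-M_X=(1-2\alpha)Q$), and then the common factor $\psi/[(\psi-\sigma^2)(\psi-\tau^2)]$ cancels and the entire equation collapses to the \emph{scalar} identity
\[
C\bigl[\psi(z)\,z^2+a(\sigma^2+\tau^2)\,z+b(\sigma^2+\tau^2)\bigr]=\frac{\sigma^2\tau^2}{\sigma^2+\tau^2}.
\]
Since the right-hand side is independent of $z$, this forces $\psi(z)z^2$ to be a polynomial in $z$ of degree at most $1$, and combined with the normalization $\psi z^2\to 1$ at $0$ we get $\psi(z)z^2=1+c_1 z$ for some $c_1\in\mathbb{R}$. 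Hence
\[
\eta_X(z)=\frac{\sigma^2z^2}{1+c_1 z},\qquad \eta_Y(z)=\frac{\tau^2z^2}{1+c_1 z},
\]
and these are exactly the Boolean cumulant generating functions of the mean-zero Bernoulli distributions of variances $\sigma^2,\tau^2$ (a scaling of~\eqref{Binomial} with $b=-c_1\sigma$ and $-c_1\tau$ respectively), completing the proof.

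The main obstacle is the combinatorial step producing the closed-form mixed moment generating functions: the non-commutativity of the setting and the failure~\eqref{Boolean-not-factor} of Boolean factorization for general polynomials make the expansion look delicate, but the vanishing of single-letter moments $\state{X}=\state{Y}=0$ kills any monomial containing an isolated run and cuts the surviving sum down to a rational function of $M_X$ and $M_Y$. Once these formulas are established, the rest of the argument is algebraic manipulation together with the simple observation that hypothesis 2 constrains $\psi z^2$ to be a polynomial of degree $\leq 1$.
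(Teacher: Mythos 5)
Your argument is correct, but it takes a genuinely different route from the paper's. The paper follows the Bo{\.z}ejko--Bryc template: it introduces the auxiliary variable $\mf{V}=\beta X-\alpha Y$, establishes the cumulant identities $\kappa_n(X)=\alpha\kappa_n(\mf{S})$ and $\Cum{\mf{S},\ldots,\mf{S},\mf{V},\mf{V}}=\alpha\beta\kappa_n(\mf{S})$, and then evaluates $\State{\mf{S}^n\mf{V}^2}$ via the interval-partition moment--cumulant formula~\eqref{Cumulants-definition}; the quadratic regression then collapses to a three-term linear recursion for the moments of $\mf{S}$ alone, solved as $M_{\mf{S}}(z)=z^2/(1-az-z^2)$. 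You bypass $\mf{V}$ and the cumulant combinatorics entirely: you compute the mixed-moment generating functions $\sum_n z^n\state{S^nX}$ and $\sum_n z^n\state{S^nX^2}$ directly from the alternating-run factorization~\eqref{Boolean-independence} (your closed form $E_X=M_X(1+M_Y)/(1-M_XM_Y)$ is correct, since isolated runs die because $\state{X}=\state{Y}=0$, and $\state{S^nXY}=\state{S^nYX}=0$ for the same reason), and then reduce the variance condition to the scalar identity $C\bigl[z^2\psi(z)+(\sigma^2+\tau^2)(az+b)\bigr]=\sigma^2\tau^2/(\sigma^2+\tau^2)$ for $\psi=\sigma^2/\eta_X$, which forces $\eta_X(z)=\sigma^2 z^2/(1+c_1z)$. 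I checked this identity; it is equivalent to the paper's moment recursion after clearing the common factor $1-M_XM_Y$, and $C=0$ is excluded by non-degeneracy since it would force $\sigma^2\tau^2=0$. The paper's route buys brevity (the cumulant machinery does the bookkeeping) and a direct statement about $\mf{S}$; yours buys self-containedness and an explicit formula for $\eta_X,\eta_Y$ themselves, which makes the final ``hence $X$ and $Y$ are Bernoulli'' step more transparent than the paper's appeal to $\kappa_n(X)=\alpha\kappa_n(\mf{S})$. One caveat you share with the paper: the opening reduction to $\state{X}=\state{Y}=0$ by translation is not innocent in the Boolean category, since Boolean independence is not preserved by adding scalars (cf.~\eqref{Boolean-not-factor}); both proofs gloss over this, so it is not a defect of your argument relative to the paper's, but it deserves a remark.
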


\begin{proof}
The proof of Theorem 3.2 in \cite{Boz-Bryc} goes through verbatim until Lemma 4.1. Briefly, we may assume that $\State{X} = \State{Y} = 0$, and $\State{X^2} + \State{Y^2} = 1$, so that $\alpha_0 = 0$ and $\alpha = \State{X^2}$. Denote $\beta = \State{Y^2}$, $\mf{S} = X + Y$ and $\mf{V} = \beta X - \alpha Y$. Then
\begin{equation}
\label{X-Y-S}
\kappa_n(X) = \alpha \kappa_n(\mf{S}) \qquad \text{and} \qquad \kappa_n(Y) = \beta \kappa_n(\mf{S}),
\end{equation}
and
\begin{equation}
\label{V-V-S}
\Cum{\underbrace{\mf{S}, \ldots, \mf{S}}_{n-2 \text{ times}}, \mf{V}, \mf{V}} = \alpha \beta \kappa_n(\mf{S})
\end{equation}
(note an order change from \cite{Boz-Bryc}) for $n \geq 2$. Also,
\[
\State{\mf{V}^2 | \mf{S}} = \State{(X - \alpha \mf{S})^2 | S} = \State{(X - \State{X | \mf{S}})^2 | \mf{S}} = \mathrm{Var} \left[X | \mf{S} \right].
\]
Finally, after putting in normalizations and using equation~\eqref{Projection}, it follows from equation~\eqref{Quadratic-regression} that
\[
\State{\mf{S}^{n} \mf{V}^2} = \frac{\alpha \beta}{1 + b} \Bigl(\State{\mf{S}^n} + a \State{\mf{S}^{n+1}} + b \State{\mf{S}^{n+2}} \Bigr)
\]

\br
Now using equations~\eqref{Cumulants-definition} and \eqref{V-V-S},
\[
\begin{split}
\State{\mf{S}^{n} \mf{V}^2}
& = \State{\mf{S}^{n} \mf{V}} \Cum{\mf{V}} + \State{\mf{S}^n} \Cum{\mf{V}, \mf{V}} + \sum_{\substack{\pi \in \Int(n) \\ \pi = (B_1, B_2, \ldots, B_k)}} \prod_{i=1}^{k-1} \kappa_{\abs{B_i}}(\mf{S}) \Cum{\underbrace{\mf{S}, \ldots, \mf{S}}_{\abs{B_k} \text{ times}}, \mf{V}, \mf{V}}\\
& = \alpha \beta \State{\mf{S}^n} + \alpha \beta \sum_{\substack{\pi \in \Int(n) \\ \pi = (B_1, B_2, \ldots, B_k)}} \prod_{i=1}^{k-1} \kappa_{\abs{B_i}}(\mf{S}) \kappa_{\abs{B_k} + 2}(\mf{S})
= \alpha \beta \State{\mf{S}^{n+2}}.
\end{split}
\]
Thus
\[
\State{\mf{S}^{n+2}} = \frac{1}{1 + b} \Bigl(\State{\mf{S}^n} + a \State{\mf{S}^{n+1}} + b \State{\mf{S}^{n+2}} \Bigr)
\]
and
\[
\sum_{n=0}^\infty \State{\mf{S}^{n+2}} z^{n+2} = \sum_{n=0}^\infty \State{\mf{S}^n} z^{n+2} + a \sum_{n=0}^\infty \State{\mf{S}^{n+1}} z^{n+2}
\]
Noting that $\State{\mf{S}} = 0$,
\[
M_{\mf{S}}(z) = z^2 (1 + M_{\mf{S}}(z)) + a zM_{\mf{S}}(z),
\]
so
\[
M_{\mf{S}}(z) (1 - a z - z^2) = z^2
\]
and
\[
M_{\mf{S}}(z) = \frac{z^2}{1 - a z - z^2}.
\]
Therefore
\[
G_{\mf{S}}(z) = \frac{1}{z} \left( 1 + M_{\mf{S}} \Bigl( \frac{1}{z} \Bigr) \right) = \frac{1}{z} \frac{1 - \frac{a}{z}}{1 - \frac{a}{z} - \frac{1}{z^2}} = \frac{z - a}{z^2 - a z - 1}.
\]
So the distribution of $\mf{S}$, and from \eqref{X-Y-S} also the distributions of $X$ and $Y$, are Bernoulli distributions \eqref{Binomial}, with $a$ replacing $b$.
\end{proof}

\subsection{Boolean Meixner process}
\label{Subsec:Process}

A construction inspired by \cite{Sniady-SWN} produces an infinite dimensional Boolean Meixner (or, in the language of \cite{Lytvynov-Meixner}, Jacobi) field. Let $\mc{A}$ be a complex $\ast$-algebra, $\mc{A}^{sa}$ its self-adjoint part, and $\psi$ a state on it, so that we can define the Hilbert space $\mc{H} = L^2(\mc{A}, \psi)$ obtained via the GNS construction. We identify
\[
L^2(\mc{A}, \psi) \otimes \ldots \otimes L^2(\mc{A}, \psi) \simeq L^2(\mc{A} \times \ldots \times \mc{A}, \psi \otimes \ldots \otimes \psi)
\]
with $L^2(\mc{A}, \psi)$ via the multiplication map
\[
\mk{m}: \mc{A} \times \ldots \times \mc{A} \rightarrow \mc{A},
\]
in other words we complete $\mc{A} \times \ldots \times \mc{A}$ with respect to the inner product
\[
\ip{f_1 \otimes f_2 \otimes \ldots \otimes f_n}{g_1 \otimes g_2 \otimes \ldots \otimes g_n} = \psi\left[ f_n \ldots f_1 g_1 \ldots g_n \right].
\]
With this identification, the full Fock space of $\mc{H}$
\[
\mf{C} \Omega \oplus \mc{H} \oplus \mc{H}^{\otimes 2} \oplus \mc{H}^{\otimes 3} \oplus \ldots
\]
collapses to the extended Boolean Fock space
\[
\mc{F}(\mc{H}) = \mf{C} \Omega \oplus \mc{H} \oplus \mc{H} \oplus \mc{H} \oplus \ldots,
\]
whose elements are of the form
\[
(\alpha, g_1, g_2, \ldots).
\]
For $f \in \mc{A}^{sa}$, define operators on $\mc{F}(\mc{H})$
\begin{align*}
a^+(f) (\alpha, g_1, g_2, \ldots) & = (0, \alpha f, f g_1, f g_2, \ldots), \\
a^0(f) (\alpha, g_1, g_2, \ldots) & = (0, f g_1, f g_2, \ldots), \\
a^-(f) (\alpha, g_1, g_2, \ldots) & = (\psi\left[f g_1 \right], 0, 0, \ldots), \\
\tilde{a}(f) (\alpha, g_1, g_2, \ldots) & = (0, f g_2, f g_3, f g_4, \ldots), \\
\end{align*}
and
\[
X(f) = a^+(f) + b a^0(f) + a^-(f) + c \tilde{a}(f),
\]
where $b \in \mf{R}$, $ c \geq 0$. On $\mc{F}(\mc{H})$, put the inner product
\[
\ip{(\alpha, g_1, g_2, \ldots)}{(\beta, h_1, h_2, \ldots)} = \bar{\alpha} \beta + \sum_{k=1}^\infty c^{k-1} \psi \left[g_k^\ast h_k \right].
\]
This is the same inner product as considered in \cite{Boz-Wys}, compare also with Section~5 of \cite{Krystek-Yoshida-t}.

\begin{Prop}
\br
\begin{enumerate}
\item
$\norm{a^+(f)} \leq \max(1, \sqrt{c}) \norm{f}_\infty$, $\norm{a^0(f)}_\infty \leq \norm{f}_\infty$, $a^0(f)$ is self-adjoint, and $(a^-(f) + \tilde{a}(f))$ is the adjoint of $a^+(f)$, so that for $\norm{f}_{\infty} < \infty$, $X(f)$ is bounded and self-adjoint.
\item
If $\set{f_1, f_2, \ldots, f_n}$ are pairwise orthogonal, meaning $f_i f_j = 0$ for $i \neq j$, then
\[
\set{X(f_1), X(f_2), \ldots, X(f_n)}
\]
are Boolean independent with respect to the vacuum state $\ip{\Omega}{\cdot \Omega}$. In fact, the joint Boolean cumulants of $(X(f_1), X(f_2), \ldots, X(f_n))$ are the same as the joint free cumulants of Theorem~8 of \cite{Sniady-SWN}, so this is yet another implementation of the BP bijection.
\item
For $(\mc{A}, \psi) = (L^\infty([0,1]), dx)$ and $f = \chf{[0,t)}$, the distribution of $X(f)$ is $\phi_{c, b}^{\uplus t}$.
\end{enumerate}
\end{Prop}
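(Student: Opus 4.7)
The plan is to establish the three parts by direct computation on the extended Boolean Fock space.

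For part (a), I would bound $\norm{a^+(f)\xi}^2$ and $\norm{a^0(f)\xi}^2$ using the weighted inner product, via the key estimate $\psi[g^\ast f^\ast f g]\leq\norm{f}_\infty^2\,\psi[g^\ast g]$ coming from the definition of $\norm{f}_\infty$; for $a^+$ the shift of weights by one Fock level produces the extra factor $\max(1,\sqrt{c})$. Self-adjointness of $a^0(f)$ for $f=f^\ast$ is a direct symmetry check using $\ip{a^0(f)\xi}{\eta}=\sum_k c^{k-1}\psi[g_k^\ast f h_k]$. The adjoint of $a^+(f)$ is then computed component by component; matching the shifted weight $c^{k-1}$ on the $\xi$-side against $c^k$ on the $a^+(f)\xi$-side shows that $a^+(f)^\ast=a^-(f)+c\tilde{a}(f)$ (the weight factor must be tracked carefully), after which self-adjointness of $X(f)$ is immediate from $b\in\mf{R}$.

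For part (b), the crucial observation is that the Boolean annihilation operator $a^-(f)$ only reads the first Fock level, so a vacuum expectation $\ip{\Omega}{X(f_{u(1)})\cdots X(f_{u(n)})\Omega}$ decomposes as a sum over operator trajectories that, each time the annihilator acts, return all the way to the vacuum before the next creation. Combined with $f_if_j=0$ for $i\neq j$, this forces any contribution mixing two distinct indices to vanish identically, establishing the moment factorization \eqref{Boolean-independence} and hence Boolean independence. To identify the joint Boolean cumulants with \'Sniady's free cumulants from Theorem~8 of \cite{Sniady-SWN}, I would compute $\Cum{X(f_{u(1)}),\ldots,X(f_{u(n)})}$ via the recursion \eqref{Cumulants-definition}, expanding each $X(f_{u(i)})=a^++ba^0+a^-+c\tilde a$ into its four pieces, and match the resulting expression term by term against the free-cumulant formula in \cite{Sniady-SWN}; since the Boolean and free cumulants of the resulting state coincide as functions of $(b,c,\psi[f_i])$, this also gives the advertised BP-bijection interpretation.

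For part (c), with $f=\chf{[0,t)}$ we have $f^n=f$ and $\psi[f^n]=t$ for every $n\geq 1$. Feeding these values into the cumulant expression from part (b) yields $\Cum{\underbrace{X(f),\ldots,X(f)}_{n}}=t\cdot\kappa_n(\phi_{c,b})$, because each diagram contributes exactly one factor $\psi[f^{\abs{\cdot}}]=t$ at its single annihilation step while the remaining structure depends only on $b$ and $c$. Since Boolean convolution powers multiply Boolean cumulants by the scalar $t$, we conclude $X(f)\sim\phi_{c,b}^{\uplus t}$; the identification of $\phi_{c,b}$ itself at $t=1$ follows by reading off the Jacobi coefficients of $X(\chf{[0,1)})$ from the tridiagonal action of $a^++ba^0+a^-+c\tilde{a}$ on the vacuum tower and matching the recursion of Proposition~\ref{Prop:Free-Meixner}(d).

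The main obstacle I anticipate is the combinatorial bookkeeping in part (b): the shift operator $\tilde{a}(f)$ couples different Fock levels in a non-trivial way, so enumerating exactly which operator trajectories survive, weighting them correctly by powers of $c$, and matching them with \'Sniady's non-crossing partition formula is delicate. Fortunately, the orthogonality hypothesis $f_if_j=0$ eliminates most mixed terms by forcing any multi-factor $f$-product inside a single Fock entry to vanish, so the combinatorics reduces essentially to the single-index case and a clean term-by-term comparison with \cite{Sniady-SWN}.
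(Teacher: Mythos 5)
The paper states this proposition without proof, so there is no official argument to compare against; your proposal supplies the natural one and is essentially sound. A few points of detail are worth recording. In (a), your insistence on tracking the weight is exactly right: with the inner product $\sum_k c^{k-1}\psi[g_k^\ast h_k]$ one finds $a^+(f)^\ast=a^-(f)+c\,\tilde{a}(f)$, not $a^-(f)+\tilde{a}(f)$ as literally printed, and it is precisely this version that makes $X(f)=a^+(f)+ba^0(f)+a^-(f)+c\tilde{a}(f)$ symmetric; the factor $\max(1,\sqrt{c})$ from the level shift is also correct. In (b), the ``trajectory'' language is slightly off ($\tilde{a}(f)$ lowers the level without returning to the vacuum, and the surviving mixed contributions do not vanish --- they factor); the clean mechanism is the one you give at the end: every non-vacuum component of a vector in the cyclic subspace generated from $\Omega$ by $X(f)$ is a left multiple of $f$, hence is killed by all four pieces of $X(g)$ when $gf=0$, so $X(g)\xi=\ip{\Omega}{\xi}\,X(g)\Omega$ there, and iterating yields \eqref{Boolean-independence}. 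For the cumulant identification you can bypass the term-by-term match with \eqref{Cumulants-definition} by invoking Proposition~\ref{Prop:General-Boolean-Fock}: writing $X(f)=a^+_{\xi_f}+a^-_{\xi_f}+T_f$ with $\xi_f=(0,f,0,\ldots)$ and $T_f$ the restriction of $a^+(f)+ba^0(f)+c\tilde{a}(f)$ to the orthocomplement of $\Omega$, the joint Boolean cumulants are $\ip{\xi_{f_i}}{T_{f_{u(1)}}\cdots T_{f_{u(k)}}\xi_{f_j}}$, which is \'Sniady's formula. In (c), carrying out your Jacobi computation gives $e_n=(0,\ldots,0,f,0,\ldots)$ with $Xe_1=e_2+be_1+t\Omega$ and $Xe_n=e_{n+1}+be_n+ce_{n-1}$ for $n\geq 2$, i.e.\ $\gamma_1=t$, $\gamma_{n\geq 2}=c$, $\beta_{n\geq 1}=b$; in the normalization of Proposition~\ref{Prop:Free-Meixner}(d), where $\gamma_{n\geq 2}=1+C$, this is $\phi_{c-1,b}^{\uplus t}$, so the statement's $\phi_{c,b}$ must be read with $c$ denoting the Jacobi coefficient itself (consistent with the hypothesis $c\geq 0$ and with the $c=0$ case being Bernoulli) rather than the matrix entry $C$ --- your plan of ``matching the recursion'' would surface exactly this normalization issue.
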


\begin{Remark}
If $c = 0$, which corresponds to the  Gaussian / Poisson case, this ``extended Boolean Fock space'' collapses to $\mf{C} \Omega \oplus \mc{H}$, which is a Boolean Fock space considered in \cite{Ben-Ghorbal-Boolean,Franz-Unification}.
\end{Remark}

\appendix

\section{Operator model, continued fractions, and applications}

\subsection{Multivariate continued fractions}

The purpose of this section is to prove Corollary~\ref{Cor:Boolean-semigroup}, which shows that the relation between recursion relations and the Boolean convolution used in Proposition~\ref{Prop:Free/Boolean} holds in full generality. In the one variable case, this relation was observed in \cite{Boz-Wys}, and follows easily from a continued fraction expansion for the Cauchy transform of the measure. We follows that proof, which requires an introduction of a multivariate continued fraction expansion for a moment generating function of a general state (with a MOPS; see below). This result may be of independent interest.

\begin{Remark}[General Fock space construction]
\label{Remark:General-Fock}
Let $\mc{H} = \mf{C}^d$, with the canonical orthonormal basis $e_1, \ldots, e_d$. Define the (algebraic) full Fock space of $\mc{H}$ to be
\[
\Falg(\mc{H}) = \mf{C} \Omega \oplus \bigoplus_{k=1}^\infty \mc{H}^{\otimes k}
\]

\br
For $i = 1, 2, \ldots, d$, define $a_i^+$ and $a_i^-$ to be the usual (left) free creation and annihilation operators,
\begin{align*}
a_i^+ & \left(e_{u(1)} \otimes e_{u(2)} \otimes \ldots \otimes e_{u(k)} \right) = e_i \otimes e_{u(1)} \otimes e_{u(2)} \otimes \ldots \otimes e_{u(k)}, \\
a_i^-      & (e_j) = \ip{e_i}{e_j} \Omega = \delta_{i j} \Omega, \\
a_i^-      & \left(e_{u(1)} \otimes e_{u(2)} \otimes \ldots \otimes e_{u(k)} \right) = \ip{e_i}{e_{u(1)}} e_{u(2)} \otimes \ldots \otimes e_{u(k)}.
\end{align*}
For future reference, we also define the Boolean annihilation operator
\[
a_i^{b-} =
\begin{cases}
a_i^- & \text{ on } \mc{H}^{\otimes k}, k \leq 1, \\
0, & \text{ on } \mc{H}^{\otimes k}, k \geq 2.
\end{cases}
\]

\br
For each $k \geq 1$ let $\mc{C}^{(k)}$ be a diagonal non-negative $d^k \times d^k$ matrix, with entries
\begin{equation}
\label{Expansion-C}
\mc{C}(e_{u(1)} \otimes \ldots \otimes e_{u(k)}) = \mc{C}_{\vec{u}} e_{u(1)} \otimes \ldots \otimes e_{u(k)}.
\end{equation}
Similarly, for each $i = 1, 2, \ldots, d$ and each $k \geq 0$, let $\mc{T}_i^{(k)}$ be a $d^k \times d^k$ matrix, with entries
\begin{equation}
\label{Expansion-T}
\mc{T}_i(e_{u(1)} \otimes \ldots \otimes e_{u(k)}) = \sum_{\abs{\vec{w}} = k} B_{i, \vec{w}, \vec{u}} e_{w(1)} \otimes \ldots \otimes e_{w(k)}.
\end{equation}
We identify $\mc{C}^{(k)}, \mc{T}_i^{(k)}$ with operators
\[
\mc{C}^{(k)}, \mc{T}_i^{(k)}: \mc{H}^{\otimes k} \rightarrow \mc{H}^{\otimes k}.
\]
Assume that $\mc{T}_i^{(k)}$ and $\mc{C}^{(j)}$ satisfy a commutation relation
\[
\left(\mc{T}_i^{(k)} \right)^\ast \mc{K}_{\mc{C}}^{(k)} = \mc{K}_{\mc{C}}^{(k)} \mc{T}_i^{(k)},
\]
where
\[
\mc{K}_{\mc{C}}^{(k)} = (I \otimes I \otimes \ldots \otimes I \otimes C^{(1)}) \ldots (I \otimes I \otimes C^{(k-2)}) (I \otimes C^{(k-1)}) C^{(k)}.
\]
We will denote by $\mc{T}_i$ and $\mc{C}$ the operators on $\Falg(\mc{H})$ acting as $\mc{T}_i^{(k)}$ and $\mc{C}^{(k)}$ on each component. Finally, let $\tilde{a}_i^- = a_i^- \mc{C}$ and
\[
\mc{X}_i = a_i^+ + \mc{T}_i + \tilde{a}_i^-.
\]
With the appropriate choice of the inner product $\ip{\cdot}{\cdot}_{\mc{C}}$ on the completion $\mc{F}_{\mc{C}}(\mc{H})$ of the quotient of $\Falg(\mc{H})$, all the operators $a_i^+, \mc{T}_i, \tilde{a}_i^-$ factor through to $\mc{F}_{\mc{C}}(\mc{H})$, and each $\mc{X}_i$ is a symmetric operator on it.
\end{Remark}

\begin{Prop}(Part of Theorem 2 of \cite{AnsMonic}, modified for complex-values states)
\label{Prop:Monic-states}
Let $\phi$ be a state on $\mf{C} \langle \mb{x} \rangle$. The following are equivalent:
\begin{enumerate}
\item
The state $\phi$ has a monic orthogonal polynomial system.
\item
For some choice of the matrices $\mc{C}^{(k)}$ and $\mc{T}_i^{(k)}$ as above, the state $\phi$ has a Fock space representation $\phi_{\mc{C}, \set{\mc{T}_i}}$ as
\begin{equation*}
\state{P(x_1, x_2, \ldots, x_d)} = \ip{\Omega}{P(\mc{X}_1, \mc{X}_2, \ldots, \mc{X}_d) \Omega}.
\end{equation*}
\item
There is a family of polynomials $\set{P_{\vec{u}}}$ such that $\state{P_{\vec{u}}} = 0$ for all $\vec{u} \neq \emptyset$ and they satisfy a recursion relation
\begin{align*}
x_i & = P_i + B_{i, \emptyset, \emptyset}, \\
x_i P_u & = P_{(i, u)} + \sum_{w=1}^d B_{i, w, u} P_{w} + \delta_{i, u} \mc{C}_u, \\
x_i P_{\vec{u}} & = P_{(i, \vec{u})} + \sum_{\abs{\vec{w}} = \abs{\vec{u}}} B_{i, \vec{w}, \vec{u}} P_{\vec{w}} + \delta_{i, u(1)} \mc{C}_{\vec{u}} P_{(u(2), u(3), \ldots, u(k))}.
\end{align*}
\end{enumerate}
\end{Prop}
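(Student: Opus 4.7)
The plan is to adapt the proof of Theorem~2 of \cite{AnsMonic} to complex-valued states, preserving the overall equivalence scheme (a) $\Leftrightarrow$ (c) $\Leftrightarrow$ (b), while keeping careful track of conjugations in the sesquilinear form $\ip{\cdot}{\cdot}_\phi$.

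First I would prove (a) $\Rightarrow$ (c) by expanding $x_i P_{\vec{u}}$ in the given MOPS. Since $x_i P_{\vec{u}}$ is monic of degree $|\vec{u}|+1$ with leading monomial $x_{(i,\vec{u})}$, the difference $x_i P_{\vec{u}} - P_{(i,\vec{u})}$ lies in the span of the $P_{\vec{v}}$ with $|\vec{v}| \le |\vec{u}|$. The level-$|\vec{u}|$ coefficients define $B_{i,\vec{v},\vec{u}}$. For $|\vec{v}| < |\vec{u}|$, self-adjointness of $x_i$ gives $\ip{P_{\vec{v}}}{x_i P_{\vec{u}}}_\phi = \ip{x_i P_{\vec{v}}}{P_{\vec{u}}}_\phi$, which vanishes by orthogonality unless $|\vec{v}| + 1 = |\vec{u}|$ and the leading term $P_{(i,\vec{v})}$ of $x_i P_{\vec{v}}$ equals $P_{\vec{u}}$. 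This forces $\vec{u}=(i,\vec{v})$, hence $i = u(1)$, and produces exactly one lowering term with coefficient $\mc{C}_{\vec{u}} = \ip{P_{\vec{u}}}{P_{\vec{u}}}_\phi / \ip{P_{(u(2),\ldots,u(k))}}{P_{(u(2),\ldots,u(k))}}_\phi$.

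Next I would prove (c) $\Rightarrow$ (a) by treating the recursion as a definition of $P_{(i,\vec{u})}$. Induction on $|\vec{u}|$ gives monicity of the correct degree. Orthogonality then follows by induction on $|\vec{u}| + |\vec{v}|$: the base case is the centering hypothesis $\state{P_{\vec{u}}} = \ip{P_\emptyset}{P_{\vec{u}}}_\phi = 0$ for $\vec{u} \neq \emptyset$, and the induction step rewrites $\ip{P_{(i,\vec{u})}}{P_{\vec{v}}}_\phi$ as $\ip{P_{\vec{u}}}{x_i P_{\vec{v}}}_\phi$ minus lower-order inner products (which vanish by induction hypothesis), then expands $x_i P_{\vec{v}}$ via the recursion to leave an inner product of polynomials of degrees $|\vec{u}|$ and $|\vec{v}|+1$, again handled inductively.

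Finally I would prove (b) $\Leftrightarrow$ (c) by identifying $P_{\vec{u}} \leftrightarrow e_{\vec{u}}$. The recursion in (c) translates term by term to the action of $\mc{X}_i = a_i^+ + \mc{T}_i + \tilde{a}_i^-$ on $e_{\vec{u}}$: $a_i^+$ produces $e_{(i,\vec{u})}$, $\mc{T}_i$ produces the $B$-sum by \eqref{Expansion-T}, and $\tilde{a}_i^-$ produces $\delta_{i,u(1)}\mc{C}_{\vec{u}}$ times the tail by \eqref{Expansion-C}. The identity $\state{x_{\vec{u}}} = \ip{\Omega}{\mc{X}_{u(1)}\cdots\mc{X}_{u(k)}\Omega}$ then follows by induction using $\state{P_{\vec{u}}} = \delta_{\vec{u},\emptyset}$. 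The step requiring the most care, and the main novelty over the real-valued case of \cite{AnsMonic}, is the commutation relation $(\mc{T}_i^{(k)})^\ast \mc{K}_\mc{C}^{(k)} = \mc{K}_\mc{C}^{(k)} \mc{T}_i^{(k)}$: this is what guarantees that the $\mc{C}$-weighted sesquilinear form descends to a positive semi-definite inner product on $\mc{F}_{\mc{C}}(\mc{H})$ and that each $\mc{X}_i$ is symmetric with respect to it. On the MOPS side the same relation appears as $B_{i,\vec{v},\vec{u}} \ip{P_{\vec{v}}}{P_{\vec{v}}}_\phi = \overline{B_{i,\vec{u},\vec{v}}} \ip{P_{\vec{u}}}{P_{\vec{u}}}_\phi$, forced by self-adjointness of $x_i$; in the real case of \cite{AnsMonic} the complex conjugation is invisible, but here it must be tracked throughout.
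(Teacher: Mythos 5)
The paper itself gives no proof here: the proposition is quoted from Theorem~2 of \cite{AnsMonic} with only the parenthetical remark that it has been modified for complex-valued states, so there is no in-paper argument to compare against. Your sketch reconstructs the cited proof along its expected lines --- extracting the recursion coefficients from a MOPS via self-adjointness of multiplication by $x_i$, a Favard-type converse using the centering condition, and the dictionary $P_{\vec{u}} \leftrightarrow e_{\vec{u}}$ matching the recursion with the action of $\mc{X}_i = a_i^+ + \mc{T}_i + \tilde{a}_i^-$ --- and you correctly isolate the one point where complex scalars matter, namely that the commutation relation $\left(\mc{T}_i^{(k)}\right)^\ast \mc{K}_{\mc{C}}^{(k)} = \mc{K}_{\mc{C}}^{(k)} \mc{T}_i^{(k)}$ is exactly the conjugate-symmetry constraint $B_{i, \vec{v}, \vec{u}} \norm{P_{\vec{v}}}^2 = \overline{B_{i, \vec{u}, \vec{v}}} \norm{P_{\vec{u}}}^2$ forced by self-adjointness of $x_i$. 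The only place I would tighten the write-up is the induction in (c) $\Rightarrow$ (a): an induction on $\abs{\vec{u}} + \abs{\vec{v}}$ alone does not terminate in the equal-level case, and the standard fix is to first show $\state{Q \, P_{\vec{u}}} = 0$ for every polynomial $Q$ of degree less than $\abs{\vec{u}}$ (the recursion expresses $x_{\vec{w}} P_{\vec{u}}$ as a combination of $P_{\vec{v}}$ with $\abs{\vec{v}} \geq \abs{\vec{u}} - \abs{\vec{w}} > 0$, all centered), and then settle $\abs{\vec{v}} = \abs{\vec{u}}$ by noting that the only lattice path from level $\abs{\vec{u}}$ to level $0$ in $\abs{\vec{u}}$ steps descends at every step, which forces $\vec{v} = \vec{u}$.
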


\begin{Thm}
\label{Thm:Continued-fraction}
Let $\phi$ be a state with a MOPS, $\set{\mc{T}_i^{(k)}, \mc{C}^{(k)}}$ the matrices in its Fock space representation, whose entries are the coefficients in the recursion relation for the MOPS, and $M(\mb{z})$ its moment generating function. Then
\[
1 + M(\mb{z}) =
\cfrac{1}{1 - \sum_{i_0} z_{i_0} \mc{T}_{i_0}^{(0)} -
\cfrac{\sum_{j_1} z_{j_1} E_{j_1} \mc{C}^{(1)} | \sum_{k_1} E_{k_1} z_{k_1}}{1 - \sum_{i_1} z_{i_1} \mc{T}_{i_1}^{(1)} -
\cfrac{\sum_{j_2} z_{j_2} E_{j_2} \mc{C}^{(2)} | \sum_{k_2} E_{k_2} z_{k_2}}{1 - \sum_{i_2} z_{i_2} \mc{T}_{i_2}^{(2)} -
\cfrac{\sum_{j_3} z_{j_3} E_{j_3} \mc{C}^{(3)} | \sum_{k_3} E_{k_3} z_{k_3}}{1 - \ldots}}}}
\]
Here for matrices
\[
A, B \in M_{d^k \times d^k} \simeq M_{d \times d} \otimes M_{d \times d} \otimes \ldots \otimes M_{d \times d},
\]
we use the notation
\[
\frac{E_i A | E_j}{B} = \ip{e_i \otimes I \otimes \ldots \otimes I}{A B^{-1} (e_j \otimes I \otimes \ldots \otimes I)} \in M_{d^{k-1} \times d^{k-1}}.
\]
\end{Thm}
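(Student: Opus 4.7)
The plan is to combine the Fock space model of Proposition~\ref{Prop:Monic-states} with iterated block Schur complements. By that proposition,
\[
1 + M(\mb{z}) = \ip{\Omega}{\Bigl(1 - \sum_i z_i \mc{X}_i\Bigr)^{-1} \Omega},
\]
where the $z_i$ are treated as formal indeterminates which commute with every operator but not with each other, so that the inverse is understood as a formal non-commutative power series with operator coefficients.

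Let $\mc{F}_{\geq k} = \bigoplus_{j \geq k} \mc{H}^{\otimes j}$, and let $W_{\geq k}$ denote the compression of $1 - \sum_i z_i \mc{X}_i$ to $\mc{F}_{\geq k}$. Because each $\mc{X}_i = a_i^+ + \mc{T}_i + a_i^- \mc{C}$ changes the Fock level by at most one, $W_{\geq k}$ is block-tridiagonal in the level decomposition: on level $j$ the diagonal block is $A_j = 1 - \sum_i z_i \mc{T}_i^{(j)}$; the raising block sends $\xi \in \mc{H}^{\otimes j}$ to $C_j \xi = -\sum_i z_i (e_i \otimes \xi) \in \mc{H}^{\otimes (j+1)}$; and the lowering block sends $\eta \in \mc{H}^{\otimes (j+1)}$ to $B_j \eta = -\sum_i z_i \, a_i^- \mc{C}^{(j+1)} \eta \in \mc{H}^{\otimes j}$. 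Define $F^{(k)}(\mb{z})$ to be the $(k,k)$-diagonal block of $W_{\geq k}^{-1}$, an element of $M_{d^k \times d^k}$ with entries in the formal power series ring; then $1 + M(\mb{z}) = F^{(0)}(\mb{z})$.

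The core step is a block Schur complement: since the compression of $W_{\geq k}$ to $\mc{F}_{\geq k+1}$ is precisely $W_{\geq k+1}$, and the level-$(k+1)$ corner of its inverse is by definition $F^{(k+1)}$, the standard $2 \times 2$ block inversion formula yields
\[
F^{(k)} = \bigl( A_k - B_k \, F^{(k+1)} \, C_k \bigr)^{-1}.
\]
The identity $\ip{\zeta}{a_i^- \mc{C}^{(k+1)} \eta} = \ip{e_i \otimes \zeta}{\mc{C}^{(k+1)} \eta}$, valid for $\zeta \in \mc{H}^{\otimes k}$ and $\eta \in \mc{H}^{\otimes (k+1)}$, exhibits the lowering operator as a matrix element of $\mc{C}^{(k+1)}$ in the first tensor slot. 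Comparing this with the notation $\frac{E_i A \,|\, E_j}{B}$ defined in the statement (applied with $A, B$ acting on $\mc{H}^{\otimes (k+1)}$) one checks that
\[
B_k \, F^{(k+1)} \, C_k \;=\; \frac{\sum_j z_j E_j \mc{C}^{(k+1)} \,\bigl|\, \sum_i E_i z_i}{\bigl( F^{(k+1)} \bigr)^{-1}}.
\]

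Substituting $(F^{(k+1)})^{-1} = A_{k+1} - B_{k+1} \, F^{(k+2)} \, C_{k+1}$ into the Schur identity at each level, starting from $k = 0$, unfolds directly into the nested fraction claimed by the theorem. The main obstacle is the last displayed identification: one must carefully track how the creation operators on the right and the modified annihilation operators on the left, sandwiching the resolvent $F^{(k+1)}$, collapse into the bilinear ``matrix element'' bracket of the statement, with the correct placement of the $z_i$'s on the two sides and of $\mc{C}^{(k+1)}$ inside. Once this identification is verified, the continued fraction expansion is simply an iteration of a single block-inversion identity, which is the usual Jacobi continued fraction argument applied level by level in the multivariate Fock space setting.
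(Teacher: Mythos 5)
Your argument is correct, but it takes a genuinely different route from the paper's. The paper proves the theorem combinatorially: it invokes Flajolet's theorem to expand the continued fraction as a generating function over Motzkin paths with non-commuting step weights, and then matches the weight of each path with the corresponding string of creation, preservation, and weighted annihilation operators contributing to $\ip{\Omega}{\mc{X}_{u(1)} \cdots \mc{X}_{u(n)} \Omega} = \state{x_{\vec{u}}}$. You instead start from the resolvent identity $1 + M(\mb{z}) = \ip{\Omega}{(1 - \sum_i z_i \mc{X}_i)^{-1}\Omega}$ and peel off one Fock level at a time via a $2\times 2$ block Schur complement on the block-tridiagonal operator, so that the continued fraction is the unfolding of the single recursion $F^{(k)} = (A_k - B_k F^{(k+1)} C_k)^{-1}$. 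Both are standard derivations of Jacobi-type continued fractions, and both are sound here; yours is more self-contained (no appeal to Flajolet) and makes the recursive structure of the fraction transparent, at the cost of some care with the formal non-commuting variables. Two points should be made explicit to close your write-up. First, the formal inverses exist level by level because the coefficient of any monomial $z_{\vec{u}}$ involves only Fock levels up to $\abs{\vec{u}}$, so the infinite-dimensionality of $\mc{F}_{\geq k+1}$ causes no trouble. Second, the identification you flag as the ``main obstacle'' is a one-line computation: for $e_{\vec{v}}, e_{\vec{w}} \in \mc{H}^{\otimes k}$ one has
\[
\ip{e_{\vec{w}}}{B_k F^{(k+1)} C_k \, e_{\vec{v}}} = \sum_{i,j} z_j \ip{e_j \otimes e_{\vec{w}}}{\mc{C}^{(k+1)} F^{(k+1)} (e_i \otimes e_{\vec{v}})} \, z_i,
\]
using $\ip{e_{\vec{w}}}{a_j^- \eta} = \ip{e_j \otimes e_{\vec{w}}}{\eta}$ and the convention that the $z$'s commute with matrix coefficients; this is precisely the $(\vec{w}, \vec{v})$ entry of $\frac{\sum_j z_j E_j \mc{C}^{(k+1)} | \sum_i E_i z_i}{(F^{(k+1)})^{-1}}$, with the $z$'s correctly placed on the left of the falling weight and the right of the rising weight (the two minus signs from $B_k$ and $C_k$ cancel, matching the subtraction in the continued fraction). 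With these two checks written out, your proof is complete.
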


\begin{proof}
First treating
\[
\set{ \left. a_s = \left(\sum_{j_s} z_{j_s} E_{j_s} \mc{C}^{(s)}\right), b_s = \left(\sum_{k_s} E_{k_s} z_{k_s} \right), c_s = \left(\sum_{i_s} z_{i_s} \mc{T}_{i_s}^{(s)} \right) \right| \ s \in \mf{N}}
\]
simply as non-commuting symbols, we can apply the main Theorem~1 of Flajolet \cite{Flajolet} to represent the continued fraction above as a sum over Motzkin paths, with $\sum_{j_s} z_{j_s} E_{j_s} \mc{C}^{(s)}$ the weight of a falling step from level $s$ to level $s-1$, $\sum_{k_s} E_{k_s} z_{k_s}$ the weight of a rising step from level $s-1$ to level $s$, and $\sum_{i_s} z_{i_s} \mc{T}_{i_s}^{(s)}$ the weight of a horizontal step at level $s$. Then the coefficient of $z_{\vec{u}}$ in the expansion of this continued fraction is the sum over all Motzkin paths of length $\abs{\vec{u}}$, with the weight of the path $\rho$ equal to
\[
\prod_{i=1}^{\abs{\vec{u}}} \alpha_i,
\]
where
\[
\alpha_i =
\begin{cases}
E_{u(i)} \mc{C}^{(s)}, & \rho(i) \text{ is a falling step on level } s, \\
E_{u(i)}, & \rho(i) \text{ is a rising step on level } s, \\
\mc{T}_{u(i)}^{(s)}, & \rho(i) \text{ is a horizontal step on level } s.
\end{cases}
\]
But this product is equal to
\[
\ip{\Omega}{\prod_{i=1}^{\abs{\vec{u}}} \beta_i \Omega},
\]
where
\[
\alpha_i =
\begin{cases}
a_{u(i)}^- \mc{C}^{(s)}, & \rho(i) \text{ is a falling step on level } s, \\
a_{u(i)}^+, & \rho(i) \text{ is a rising step on level } s, \\
\mc{T}_{u(i)}^{(s)}, & \rho(i) \text{ is a horizontal step on level } s,
\end{cases}
\]
and the sum of such products over all Motzkin paths of length $\abs{\vec{u}}$ is exactly
\[
\ip{\Omega}{\mc{X}_{u(1)} \mc{X}_{u(2)} \ldots, \mc{X}_{u(\abs{\vec{u}})} \Omega} = \state{x_{\vec{u}}}. \qedhere
\]
\end{proof}

\begin{Ex}
If all $\mc{T}_i^{(k)} = 0$, the expansion takes a simpler (scalar) form
\[
1 + M(\mb{z}) =
\cfrac{1}{1 - \sum_{j_1} \mc{C}_{j_1}
\cfrac{z_{j_1} | z_{j_1}}{1 - \sum_{j_2} \mc{C}_{j_2 j_1}
\cfrac{z_{j_2} | z_{j_2}}{1 - \sum_{j_3} \mc{C}_{j_3 j_2 j_1}
\cfrac{z_{j_3} | z_{j_3}}{1 - \ldots}}}}
\]
In particular, in the ``simple quadratic'' free Meixner case $\mc{C}_{j_1} = 1$ and $\mc{C}_{j_k j_{k-1} \ldots j_1} = 1 + C_{j_k j_{k-1}}$, $C_{ij} = c$, so that
\[
1 + M(\mb{z}) =
\cfrac{1}{1 - \sum_{j_1}
\cfrac{z_{j_1} | z_{j_1}}{1 - \sum_{j_2} (1 + c)
\cfrac{z_{j_2} | z_{j_2}}{1 - \sum_{j_3} (1 + c)
\cfrac{z_{j_3} | z_{j_3}}{1 - \sum_{j_4} (1 + c)
\cfrac{z_{j_4} | z_{j_4}}{1 - \ldots}}}}}
\]
and
\[
1 - (1 + M)^{-1} = \sum_j z_j (1 + A) z_j,
\]
where
\[
1 + A(\mb{z}) = \cfrac{1}{1 - \sum_{j_2} (1 + c)
\cfrac{z_{j_2} | z_{j_2}}{1 - \sum_{j_3} (1 + c)
\cfrac{z_{j_3} | z_{j_3}}{1 - \sum_{j_4} (1 + c)
\cfrac{z_{j_4} | z_{j_4}}{1 - \ldots}}}}
\]
satisfies
\[
1 - (1 + A)^{-1} = (1 + c) \sum_j z_j (1 + A) z_j.
\]
Equivalently,
\[
M = \sum_j z_j (1 + A) z_j (1 + M)
\]
and
\[
A = (1 + c) \sum_j z_j (1 + A) z_j (1 + A)
\]
Note that in the one-dimensional case, the equations for $A$ and $M$ can be solved and give quadratic formulas. In the multivariate case of a free semicircular system, corresponding to $c = 0$, we get $M = A$ and so
\[
M = \sum_j z_j (1 + M) z_j (1 + M)
\]
which also follows directly from $R(\mb{z}) = \sum_j z_j^2$ and the transformation~\eqref{R-M-formula}.
\end{Ex}

\begin{Ex}
In the free product free Meixner case, $C_{ij} = \delta_{ij} c_i$, so
\[
1 + M(\mb{z}) =
\cfrac{1}
{1 -
\cfrac{\sum_{j_1} z_{j_1} | z_{j_1}}
{1 -
\cfrac{\sum_{j_2} z_{j_2} | z_{j_2}}
{1 -
\cfrac{\sum_{j_3} z_{j_3} | z_{j_3}}{1 - \ldots} -
\cfrac{c_{j_2} z_{j_2} | z_{j_2}}{1 - \ldots}
} -
\cfrac{c_{j_1} z_{j_1} | z_{j_1}}
{1 -
\cfrac{\sum_{j_2} z_{j_2} | z_{j_2}}{1 - \ldots} -
\cfrac{c_{j_1} z_{j_1} | z_{j_1}}{1 - \ldots}
}
}
}
\]
\end{Ex}

\begin{Ex}
For a general Boolean product
\[
1 - (1 + M)^{-1} = \eta= \sum_{j=1}^d \eta_j = \sum_{j=1}^d (1 - (1 + M_j)^{-1}).
\]
Thus using the notation from Example~\ref{Example:Product-states},
\[
1 + M(\mb{z}) =
\cfrac{1}{1 - \sum_{j} \beta_0^{(j)} z_{j} - \sum_j
\cfrac{\gamma^{(j)}_1 z_j^2}{1 - \beta^{(j)}_1 z_j -
\cfrac{\gamma^{(j)}_2 z_j^2}{1 - \beta^{(j)}_2 z_j -
\cfrac{\gamma^{(j)}_3 z_j^2}{1 - \ldots}}}}
\]
\end{Ex}

\subsection{General results in Boolean theory}

In this section, we collect a number of results which, while true for the Boolean Meixner class, hold in general. Typically their free or classical analogs, if any, hold only for the corresponding Meixner class.

\begin{Cor}
\label{Cor:Boolean-semigroup}
Let $\phi^{\uplus t}$ be the Boolean convolution power of a state $\phi$ with a MOPS. The coefficients in the recursion relation for the MOPS of $\phi^{\uplus t}$ are the same as for $\phi$, except that each $\mc{T}_i^{(0)}$ and $\mc{C}^{(1)}$ get multiplied by $t$.
\end{Cor}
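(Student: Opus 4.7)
The plan is to combine the continued fraction expansion of Theorem~\ref{Thm:Continued-fraction} with the defining property $\eta_{\phi^{\uplus t}} = t\,\eta_\phi$ of the Boolean convolution power, reading off the modified recursion coefficients through the bijection of Proposition~\ref{Prop:Monic-states}.

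First, I would write the continued fraction from Theorem~\ref{Thm:Continued-fraction} in the abbreviated form
\[
1 + M_\phi(\mb{z}) = \frac{1}{1 - c_0(\mb{z}) - F_1(\mb{z})},
\]
where $c_0(\mb{z}) = \sum_{i_0} z_{i_0}\,\mc{T}_{i_0}^{(0)}$ isolates the top horizontal contribution and $F_1(\mb{z})$ is the rest of the fraction, in which $\mc{C}^{(1)}$ appears only \emph{linearly} in the numerator $\sum_{j_1} z_{j_1} E_{j_1}\mc{C}^{(1)}$ at the top level, and which otherwise involves only the matrices $\mc{T}_i^{(k)}$ and $\mc{C}^{(k)}$ for $k \geq 1$ (with $\mc{C}^{(k)}$ for $k \geq 2$).

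Second, I would apply the K-M formula~\eqref{K-M-formula} to obtain
\[
\eta_\phi(\mb{z}) = 1 - (1 + M_\phi(\mb{z}))^{-1} = c_0(\mb{z}) + F_1(\mb{z}),
\]
and then use the defining relation $\eta_{\phi^{\uplus t}} = t\eta_\phi$ to write
\[
\eta_{\phi^{\uplus t}}(\mb{z}) = tc_0(\mb{z}) + tF_1(\mb{z}).
\]
The key observation is that $tc_0$ is exactly the quantity one obtains by the substitution $\mc{T}_i^{(0)} \mapsto t\mc{T}_i^{(0)}$, while $tF_1$, by linearity of $\mc{C}^{(1)}$ in the numerator of the topmost level, coincides with the expression obtained by the substitution $\mc{C}^{(1)} \mapsto t\mc{C}^{(1)}$ (with all $\mc{T}_i^{(k)}$ for $k \geq 1$ and all $\mc{C}^{(k)}$ for $k \geq 2$ left untouched). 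Reapplying~\eqref{K-M-formula} in the reverse direction gives
\[
1 + M_{\phi^{\uplus t}}(\mb{z}) = \frac{1}{1 - tc_0(\mb{z}) - tF_1(\mb{z})},
\]
which is the continued fraction associated via Theorem~\ref{Thm:Continued-fraction} to the matrix data $(t\mc{T}_i^{(0)}, \mc{T}_i^{(k)})_{k \geq 1}$ and $(t\mc{C}^{(1)}, \mc{C}^{(k)})_{k \geq 2}$.

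Third, I would invoke the equivalence of parts~(b) and~(c) of Proposition~\ref{Prop:Monic-states}: the Fock-space data $\set{\mc{T}_i^{(k)}, \mc{C}^{(k)}}$ appearing in the continued fraction are exactly the coefficients in the recursion relation for the MOPS of the state, so the recursion for $\phi^{\uplus t}$ is the recursion for $\phi$ with $\mc{T}_i^{(0)}$ and $\mc{C}^{(1)}$ scaled by $t$. The main obstacle is conceptual rather than technical: one must verify that $\mc{C}^{(1)}$ genuinely occurs only linearly in $F_1$ and does not reappear at lower levels of the fraction. This is visible from the form of the fraction in Theorem~\ref{Thm:Continued-fraction}, where $\mc{C}^{(k)}$ is the weight of a fall from level $k$ to $k-1$ and hence each $\mc{C}^{(k)}$ occurs only at its own level.
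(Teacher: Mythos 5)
Your proposal is correct and is essentially the paper's own argument: the paper likewise applies Theorem~\ref{Thm:Continued-fraction} together with $\eta_{\phi^{\uplus t}} = t\,\eta_\phi = t\bigl(1 - (1+M_\phi)^{-1}\bigr)$, observes that $\eta_\phi$ equals the top-level horizontal term plus the top-level fraction (in whose numerator $\mc{C}^{(1)}$ appears linearly), and absorbs the factor $t$ into $\mc{T}_i^{(0)}$ and $\mc{C}^{(1)}$. Your remark that $\mc{C}^{(1)}$ occurs only at its own level of the fraction is exactly the point that makes this work.
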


\begin{proof}
From Theorem~\ref{Thm:Continued-fraction},
\[
\begin{split}
\eta_{\phi^{\uplus t}}
& = t \eta_{\phi}(\mb{z}) = t(1 - (1 + M_{\phi}(\mb{z}))^{-1}) \\
& = \sum_{i_0} z_{i_0} t \mc{T}_{i_0}^{(0)} +
\cfrac{\sum_{j_1} z_{j_1} E_{j_1} t \mc{C}^{(1)} | \sum_{k_1} E_{k_1} z_{k_1}}{1 - \sum_{i_1} z_{i_1} \mc{T}_{i_1}^{(1)} -
\cfrac{\sum_{j_2} z_{j_2} E_{j_2} \mc{C}^{(2)} | \sum_{k_2} E_{k_2} z_{k_2}}{1 - \sum_{i_2} z_{i_2} \mc{T}_{i_2}^{(2)} -
\cfrac{\sum_{j_3} z_{j_3} E_{j_3} \mc{C}^{(3)} | \sum_{k_3} E_{k_3} z_{k_3}}{1 - \ldots}}}. \qedhere
\end{split}
\]
\end{proof}

\br
The following relation between Boolean cumulants and Jacobi parameters was already noted by Lehner in the single variable case (relation 4.9 in \cite{Lehner-Cumulants-lattice}), if in rather different language.

\begin{Cor}
Let $\phi = \phi_{\mc{C}, \set{\mc{T}_i}}$ be a state with a MOPS. Using the terminology from Remark~\ref{Remark:General-Fock}, let
\[
\mc{Z}_i = \mc{X}_i - a_i^{b-} \mc{C}^{(1)} = a_i^+ + \mc{T}_i + \tilde{a}_i^{b-},
\]
where
\[
\tilde{a}_i^{b-} =
\begin{cases}
0 & \text{ on } \mc{H}^{\otimes k}, k \leq 1, \\
a_i^- \mc{C}, & \text{ on } \mc{H}^{\otimes k}, k \geq 2.
\end{cases}
\]
Then the Boolean cumulant functional $\eta$ of $\phi$ is $\Cum{x_i} = \mc{T}_0^{(i)}$,
\[
\Cum{x_i x_{\vec{u}} x_j} = \mc{C}_i \ip{e_i}{ \mc{Z}_{\vec{u}} e_j} = \ip{a_i^{b-} \Omega}{\mc{Z}_{\vec{u}} a_i^+ \Omega}.
\]
\end{Cor}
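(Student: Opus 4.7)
The plan is to deduce both formulas from the continued-fraction expansion of Theorem~\ref{Thm:Continued-fraction} combined with the K--M relation $\eta = 1 - (1 + M)^{-1}$, with the Fock-space realization of $\phi$ providing the operator interpretation. First, the single-variable identity $\Cum{x_i} = \mc{T}_i^{(0)}$ is essentially immediate: for $|\vec{u}| = 1$ the only interval partition is $\hat{1}$, so $\Cum{x_i} = \state{x_i}$, and the Fock representation gives $\state{x_i} = \ip{\Omega}{\mc{X}_i \Omega} = \ip{\Omega}{(a_i^+ + \mc{T}_i + \tilde{a}_i^-)\Omega} = \mc{T}_i^{(0)}$, using $a_i^+\Omega = e_i \perp \Omega$ and $\tilde{a}_i^- \Omega = 0$.

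For the main identity, I would peel off the top layer of the continued fraction. Theorem~\ref{Thm:Continued-fraction} gives
\[
(1 + M(\mb{z}))^{-1} = 1 - \sum_{i_0} z_{i_0} \mc{T}_{i_0}^{(0)} - F(\mb{z}),
\]
where $F(\mb{z})$ is the remaining nested fraction starting with $\sum_{j_1} z_{j_1} E_{j_1} \mc{C}^{(1)} | \sum_{k_1} E_{k_1} z_{k_1}$ in the numerator. Hence $\eta(\mb{z}) = \sum_{i_0} z_{i_0} \mc{T}_{i_0}^{(0)} + F(\mb{z})$, and all Boolean cumulants of length $\geq 2$ are encoded by $F$. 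Applying Flajolet's combinatorial interpretation to $F$, the coefficient of $z_{\vec{v}}$ is a sum of weights of Motzkin paths that rise at the first step, stay at level $\geq 1$ throughout, and fall at the last step (never returning to level~$0$ in between).

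Now the key observation is that the evolution at level $\geq 1$ is precisely the action of $\mc{Z}_i = a_i^+ + \mc{T}_i + \tilde{a}_i^{b-}$: the truncated annihilator $\tilde{a}_i^{b-}$ matches $a_i^- \mc{C}$ at levels $\geq 2$ but vanishes at level $1$, which exactly prevents the forbidden return to level $0$. Writing $\vec{v} = (i, \vec{u}, j)$, the initial rising step contributes $a_i^+\Omega = e_i$, the middle steps contribute $\mc{Z}_{\vec{u}}$, and the final falling step $\tilde{a}_j^-$ from level $1$ produces the weight $\mc{C}_i$ when matched against $e_i$. Collecting gives $\Cum{x_i x_{\vec{u}} x_j} = \mc{C}_i \ip{e_i}{\mc{Z}_{\vec{u}} e_j}$. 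The second equality is then the same expression rewritten using the adjoint relations; on $\Omega$, $(a_i^{b-})^*$ produces $e_i$ with the weight $\mc{C}_i$ absorbed via the inner product on $\mc{F}_{\mc{C}}(\mc{H})$ weighted by $\mc{K}_{\mc{C}}$.

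The main obstacle I expect is bookkeeping: ensuring the path weights, read left-to-right in the continued fraction, agree with the Fock evaluation, read right-to-left for operators, despite the fact that $\mc{T}_i^{(k)}$ and $\mc{C}^{(k)}$ for $k \geq 1$ are genuinely non-commuting matrices. The compatibility condition $(\mc{T}_i^{(k)})^* \mc{K}_{\mc{C}}^{(k)} = \mc{K}_{\mc{C}}^{(k)} \mc{T}_i^{(k)}$ (which makes each $\mc{X}_i$ self-adjoint on $\mc{F}_{\mc{C}}(\mc{H})$) should be exactly what is needed to transpose the Motzkin-path product into the $\mc{Z}_{\vec{u}}$ operator product in the correct order; this is the point that would require the most care to verify in detail.
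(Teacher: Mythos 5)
Your strategy is correct, and it is essentially the argument the paper intends: the Corollary is stated without proof, but it sits immediately after Theorem~\ref{Thm:Continued-fraction} and Corollary~\ref{Cor:Boolean-semigroup} precisely because it follows from the same manipulation you use --- peeling off the top layer of the continued fraction to get $\eta(\mb{z}) = \sum_{i_0} z_{i_0}\mc{T}_{i_0}^{(0)} + F(\mb{z})$ and then reading $F$ through Flajolet's dictionary. Your key observation, that restricting to Motzkin paths which do not return to level $0$ before their final step is implemented on the operator side by replacing $\mc{X}$ with $\mc{Z}$, whose annihilation part is truncated at level $1$, is exactly right and is the whole content of the statement (it is also the path-level meaning of the first-return decomposition underlying $\eta = 1 - (1+M)^{-1}$). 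Two small points. First, there is an orientation slip in your middle paragraph: in $\ip{\Omega}{\mc{X}_i\mc{X}_{\vec{u}}\mc{X}_j\Omega}$ the rightmost factor acts first, so it is $a_j^+\Omega = e_j$ that supplies the ket, while the leftmost factor supplies the falling step $a_i^-\mc{C}^{(1)}$, whence the weight $\mc{C}_i$ and the bra $e_i$; you attribute the rise to $i$ and the fall to $j$, though you land on the correct formula and you flag this bookkeeping issue yourself. Second, the compatibility condition $\left(\mc{T}_i^{(k)}\right)^\ast \mc{K}_{\mc{C}}^{(k)} = \mc{K}_{\mc{C}}^{(k)} \mc{T}_i^{(k)}$ is not actually needed here: the identification of path weights with vacuum matrix elements of products of $a^+$, $\mc{T}$, $a^-\mc{C}$ is already carried out in the proof of Theorem~\ref{Thm:Continued-fraction} on the algebraic Fock space with the formal vacuum functional, and your argument merely restricts that identification to a sub-family of paths; the commutation relation is needed for symmetry and positivity, not for this computation. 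Note also that the expression $\mc{C}_i\ip{e_i}{\mc{Z}_{\vec{u}}e_j}$ is a plain matrix entry (unweighted pairing); the weighted inner product on $\mc{F}_{\mc{C}}(\mc{H})$ only enters when passing to the second form, as you indicate.
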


\br
Recall that for the free Meixner states, their free cumulant functional has a similar expression in terms of the operators
\[
S_i = X_i - a_i^- = a_i^+ + T_i + a_i^- (\mc{C} - I)
\]
but there is no such expansion for general states. Note also that the Boolean-to-free version of the Bercovici-Pata on the free Meixner states, discussed in Remark~\ref{Remark:BP-bijection}, follows from these observations.

\begin{Prop}
\label{Prop:General-Boolean-Fock}
Let $\mc{H}$ be a complex Hilbert space with a distinguished unit vector $\Omega$, so that $\mc{H} = \mf{C} \Omega \oplus \mc{H}_0$. Let $\set{X_1, X_2, \ldots, X_d}$ be symmetric operators on $\mc{H}$ with a common domain $\mc{D}$ such that $\Omega \in \mc{D}$ and for all $i$, $X_i(\mc{D}) \subset \mc{D}$. Let $\phi$ be the joint distribution of $\set{X_1, X_2, \ldots, X_d}$ with respect to $\Omega$, that is
\[
\state{P(x_1, x_2, \ldots, x_d)} = \ip{\Omega}{P(X_1, X_2, \ldots, X_d) \Omega}.
\]
Then there exist numbers $\lambda_i \in \mf{R}$, vectors $\xi_i \in \mc{H}_0$, and symmetric operators $T_i$ on $\mc{H}_0$ with domain $\mc{D}_0 = \mc{H}_0 \cap \mc{D}$, $T_i(\mc{D}_0) \subset \mc{D}_0$, such that each
\begin{equation}
\label{Operator-decomposition}
X_i = a_{\xi_i}^+ + a_{\xi_i}^- + T_i + \lambda_i I
\end{equation}
and the Boolean cumulant functional of $\phi$ is
\[
\Cum{x_i} = \lambda_i; \qquad \Cum{x_i x_{\vec{u}} x_j} = \ip{\xi_i}{T_{\vec{u}} \xi_j}.
\]
Here
\begin{align*}
a_{\xi_i}^+ \zeta &= \ip{\Omega}{\zeta} \xi_i, \\
a_{\xi_i}^- \zeta &= \ip{\xi_i}{\zeta} \Omega
\end{align*}
for $\zeta \in \mc{H}$ are rank one operators, which are clearly adjoints of each other.
\end{Prop}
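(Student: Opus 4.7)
The plan is to decompose each $X_i$ along the orthogonal splitting $\mc{H} = \mf{C}\Omega \oplus \mc{H}_0$, expand $\state{x_{\vec{u}}}$ as a sum over walks on this two-level structure, and identify these walks with interval partitions of $\set{1, 2, \ldots, n}$, from which the Boolean cumulants can be read off directly.

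\br
First, I would set $\lambda_i = \ip{\Omega}{X_i \Omega}$, which is real because $X_i$ is symmetric, and define $\xi_i = X_i \Omega - \lambda_i \Omega$, a vector in $\mc{H}_0$. Let $T_i$ be the compression $P_{\mc{H}_0} X_i |_{\mc{H}_0}$; it is symmetric on $\mc{D}_0$ and preserves $\mc{D}_0$ by the analogous properties of $X_i$. The symmetry of $X_i$ also gives $\ip{\Omega}{X_i \zeta} = \ip{X_i \Omega}{\zeta} = \ip{\xi_i}{\zeta}$ for $\zeta \in \mc{H}_0$, so the four block components of $X_i$ relative to $\mc{H} = \mf{C}\Omega \oplus \mc{H}_0$ are $\lambda_i P_\Omega$, the creation $P_{\mc{H}_0} X_i P_\Omega = a_{\xi_i}^+$, the annihilation $P_\Omega X_i P_{\mc{H}_0} = a_{\xi_i}^-$, and $T_i$, yielding the decomposition \eqref{Operator-decomposition}.

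\br
For the cumulant identification, I would compute $\state{x_{\vec{u}}} = \ip{\Omega}{X_{u(1)} X_{u(2)} \cdots X_{u(n)} \Omega}$ by inserting the resolution $I = P_\Omega + P_{\mc{H}_0}$ between each consecutive pair of $X$'s. This expresses the moment as a sum, indexed by sequences $(\epsilon_0, \epsilon_1, \ldots, \epsilon_n)$ with $\epsilon_i \in \set{\Omega, \mc{H}_0}$ and $\epsilon_0 = \epsilon_n = \Omega$, of the products over $i$ of the block matrix element of $X_{u(i)}$ corresponding to $(\epsilon_{i-1}, \epsilon_i)$. Every such sequence is canonically encoded by an interval partition $\pi \in \Int(n)$: each maximal excursion $\epsilon_l = \cdots = \epsilon_r = \mc{H}_0$, bracketed by $\epsilon_{l-1} = \epsilon_{r+1} = \Omega$, produces a block $\set{l, l+1, \ldots, r+1}$, and each $\Omega \to \Omega$ transition produces a singleton. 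A direct reading shows that along such an excursion step $l$ supplies the annihilation $a_{\xi_{u(l)}}^-$, steps $l+1, \ldots, r$ supply the compressions $T_{u(l+1)}, \ldots, T_{u(r)}$, and step $r+1$ supplies the creation $a_{\xi_{u(r+1)}}^+$, combining into
\[
\ip{\xi_{u(a)}}{T_{u(a+1)} T_{u(a+2)} \cdots T_{u(b-1)} \xi_{u(b)}}
\]
for $a = l$ and $b = r+1$, while each singleton block $\set{i}$ contributes $\lambda_{u(i)}$.

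\br
This assembles into the interval-partition expansion
\[
\state{x_{\vec{u}}} = \sum_{\pi \in \Int(n)} \prod_{B \in \pi} c(B),
\]
where $c(B)$ is exactly the right-hand side of the formula for $\Cum{\cdot}$ stated in the proposition. Since the recursion \eqref{Cumulants-definition} uniquely determines the Boolean cumulant functional from the moments via exactly this interval-partition relation (by induction on $\abs{\vec{u}}$), the functional $c$ must coincide with $\eta$, which proves the proposition. The main obstacle is the bookkeeping in the walk-to-partition bijection: one must check that a maximal $\mc{H}_0$-excursion of $\epsilon$ corresponds to an interval-partition block extending one step beyond each end of the excursion, and that the order of the $T$-operators produced by the right-to-left application of the $X_{u(i)}$ along the excursion comes out as stated.
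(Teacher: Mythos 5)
Your decomposition of each $X_i$ is the same one the paper uses, and your proof is correct; the added value is that you actually supply the combinatorial argument for the cumulant identification, which the paper dispatches with ``it then easily follows.'' Your route --- insert $I = P_\Omega + P_{\mc{H}_0}$ between consecutive factors, match maximal $\mc{H}_0$-excursions with interval-partition blocks of size $\geq 2$ and $\Omega\to\Omega$ steps with singletons, then invoke uniqueness of the functional satisfying the moment--cumulant relation over $\Int(n)$ --- is the standard and correct way to see that $\eta$ picks out exactly the ``irreducible'' walks, and it is worth writing out.

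There is one point of bookkeeping where you and the paper diverge, and it is not cosmetic. You take $T_i = P_{\mc{H}_0} X_i |_{\mc{H}_0}$ (the full compression), whereas the paper's proof sets $T_i = X_i - a_{\xi_i}^+ - a_{\xi_i}^- - \lambda_i I$, which on $\mc{H}_0$ equals your $T_i - \lambda_i$. These differ whenever $\lambda_i \neq 0$, and only one choice can make both displayed formulas of the proposition literally true: with your $T_i$ the cumulant formula $\Cum{x_i x_{\vec{u}} x_j} = \ip{\xi_i}{T_{\vec{u}} \xi_j}$ is correct (the middle steps of an irreducible excursion contribute the whole block $P_{\mc{H}_0} X_{u(k)} P_{\mc{H}_0}$, diagonal part included), but then the scalar term in the decomposition is $\lambda_i P_\Omega$ rather than $\lambda_i I$; with the paper's $T_i$ the decomposition \eqref{Operator-decomposition} holds as written, but the cumulant formula acquires extra terms (e.g.\ for a single variable, $\Cum{x x x} = \ip{\xi}{T\xi} + \lambda \norm{\xi}^2$ with the paper's $T$). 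So when you assert that your block components ``yield the decomposition \eqref{Operator-decomposition},'' you are silently replacing $\lambda_i I$ by $\lambda_i P_\Omega$; you should say so explicitly rather than paper over it. In the centered case $\lambda_i = 0$ (the standing assumption in the body of the paper, though not in the appendix) the two normalizations coincide and the issue disappears.
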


\begin{proof}
Let $\lambda_i = \ip{\Omega}{X_i \Omega}$, $\xi_i = (X_i - \lambda_i I) \Omega$, and $T_i = X_i - a_{\xi_i}^+ - a_{\xi_i}^- - \lambda_i I$. Since $X_i$ is symmetric, $\lambda_i \in \mf{R}$ and $\ip{\xi_i}{\Omega} = 0$ so $\xi_i \in \mc{H}_0$. Clearly $\xi_i \in \mc{D}$, and since $X_i, a_{\xi_i}^+, a_{\xi_i}^-$ take $\mc{D}$ to itself, so does $T_i$. Moreover for $\zeta \in \mc{H}_0 \cap \mc{D}$,
\[
\ip{\Omega}{T_i \zeta}
= \ip{\Omega}{X_i \zeta - \ip{\xi_i}{\zeta} \Omega - \lambda_i \zeta}
= \ip{\Omega}{X_i \zeta} - \ip{X_i \Omega}{\zeta} + \lambda_i \ip{\Omega}{\zeta} - \lambda_i \ip{\Omega}{\zeta} = 0,
\]
so $T_i(\zeta) \in \mc{H}_0$. Since $a_{\xi_i}^+ + a_{\xi_i}^-$ is symmetric, so is $T_i$. It then easily follows that the Boolean cumulants of $\phi$ are $\Cum{x_i} = \lambda_i$ and
\[
\Cum{x_i x_{\vec{u}} x_j} = \ip{\Omega}{a_{\xi_i}^- T_{\vec{u}} a_{\xi_j}^+ \Omega} = \ip{\xi_i}{T_{\vec{u}} \xi_j}. \qedhere
\]
\end{proof}

\br
Note that there are bosonic and free versions of the operator decomposition~\eqref{Operator-decomposition} (see \cite{SchurCondPos} and \cite{GloSchurSpe}) but they only hold for operators with (freely) infinitely divisible joint distributions. The preceding proposition reflects the fact that all states are infinitely divisible in the Boolean sense \cite[Proposition 4.8]{Belinschi-Nica-Eta}.


\begin{thebibliography}{Ans07b}

\bibitem[Ans04]{AnsAppell}
Michael Anshelevich, \emph{Appell polynomials and their relatives}, Int. Math.
  Res. Not. (2004), no.~65, 3469--3531. \MR{MR2101359 (2005k:33012)}

\bibitem[Ans07a]{AnsFree-Meixner}
\bysame, \emph{Free {M}eixner states}, Commun. Math. Phys \textbf{276} (2007),
  no.~3, 863--899.

\bibitem[Ans08a]{AnsMonic}
\bysame, \emph{Monic non-commutative orthogonal polynomials},
  \texttt{arXiv:math/0702157 [math.CO]}, To be published by the
  Proceedings of the AMS, 2008.

\bibitem[Ans08b]{AnsMulti-Sheffer}
\bysame, \emph{Orthogonal polynomials with a resolvent-type generating
  function}, \texttt{arXiv:math/0410482 [math.CO]}, To be published by the
  Transactions of the AMS, 2008.

\bibitem[App80]{Appell}
M.~P. Appell, \emph{Sur une classe de polyn{\^o}mes}, Ann. Sci. Ecole Norm.
  Sup. \textbf{9} (1880), 119--144.

\bibitem[AS67]{Al-Salam-q-Appell}
Walled~A. Al-Salam, \emph{{$q$}-{A}ppell polynomials}, Ann. Mat. Pura Appl. (4)
  \textbf{77} (1967), 31--45. \MR{MR0223622 (36 \#6670)}

\bibitem[AT87]{Avram-Taqqu}
Florin Avram and Murad~S. Taqqu, \emph{Noncentral limit theorems and {A}ppell
  polynomials}, Ann. Probab. \textbf{15} (1987), no.~2, 767--775.
  \MR{88i:60058}

\bibitem[BB06]{Boz-Bryc}
Marek Bo{\.z}ejko and W{\l}odzimierz Bryc, \emph{On a class of free {L}\'evy
  laws related to a regression problem}, J. Funct. Anal. \textbf{236} (2006),
  no.~1, 59--77. \MR{MR2227129 (2007a:46071)}

\bibitem[Ber06]{Bercovici-Boolean}
Hari Bercovici, \emph{On {B}oolean convolutions}, Operator theory 20, Theta
  Ser. Adv. Math., vol.~6, Theta, Bucharest, 2006, pp.~7--13. \MR{MR2276927
  (2007m:46105)}

\bibitem[BGS02]{Ben-Ghorbal-Independence}
Anis Ben~Ghorbal and Michael Sch{\"u}rmann, \emph{Non-commutative notions of
  stochastic independence}, Math. Proc. Cambridge Philos. Soc. \textbf{133}
  (2002), no.~3, 531--561. \MR{MR1919720 (2003k:46096)}

\bibitem[BGS04]{Ben-Ghorbal-Boolean}
\bysame, \emph{Quantum stochastic calculus on {B}oolean {F}ock space}, Infin.
  Dimens. Anal. Quantum Probab. Relat. Top. \textbf{7} (2004), no.~4, 631--650.
  \MR{MR2105916 (2005j:81089)}

\bibitem[BLS96]{BLS96}
Marek Bo{\.z}ejko, Michael Leinert, and Roland Speicher, \emph{Convolution and
  limit theorems for conditionally free random variables}, Pacific J. Math.
  \textbf{175} (1996), no.~2, 357--388. \MR{MR1432836 (98j:46069)}

\bibitem[BN06]{Belinschi-Nica-Eta}
Serban~T. Belinschi and Alexandru Nica, \emph{$\eta$-series and a {B}oolean
  {B}ercovici-{P}ata bijection for bounded $k$-tuples},
  \texttt{arXiv:math/0608622 [math.OA]}, 2006.

\bibitem[BN07a]{Belinschi-Nica-B_t}
\bysame, \emph{On a remarkable semigroup of homomorphisms with respect to free
  multiplicative convolution}, \texttt{arXiv:math/0703295 [math.OA]}, 2007.

\bibitem[BN07b]{Belinschi-Nica-Free-BM}
\bysame, \emph{Free {B}rownian motion and evolution towards $\boxplus$-infinite
  divisibility for $k$-tuples}, \texttt{arXiv:0711.3787 [math.OA]}, 2007.

\bibitem[Bo{\.z}86]{Bozejko-Riesz-product}
Marek Bo{\.z}ejko, \emph{Positive definite functions on the free group and the
  noncommutative {R}iesz product}, Boll. Un. Mat. Ital. A (6) \textbf{5}
  (1986), no.~1, 13--21. \MR{MR833375 (88a:43007)}

\bibitem[Bo{\.z}87]{Bozejko-Free-groups}
\bysame, \emph{Uniformly bounded representations of free groups}, J. Reine
  Angew. Math. \textbf{377} (1987), 170--186. \MR{MR887407 (89a:22009)}

\bibitem[BP99]{BerPatDomains}
Hari Bercovici and Vittorino Pata, \emph{Stable laws and domains of attraction
  in free probability theory}, Ann. of Math. (2) \textbf{149} (1999), no.~3,
  1023--1060, With an appendix by Philippe Biane. \MR{2000i:46061}

\bibitem[BW01]{Boz-Wys}
Marek Bo{\.z}ejko and Janusz Wysocza{\'n}ski, \emph{Remarks on
  {$t$}-transformations of measures and convolutions}, Ann. Inst. H. Poincar\'e
  Probab. Statist. \textbf{37} (2001), no.~6, 737--761. \MR{MR1863276
  (2002i:60005)}

\bibitem[BW07]{Bryc-Wesolowski-Bi-Poisson}
W{\l}odzimierz Bryc and Jacek Weso{\l}owski, \emph{Bi-{P}oisson process},
  Infin. Dimens. Anal. Quantum Probab. Relat. Top. \textbf{10} (2007), no.~2,
  277--291. \MR{MR2337523}

\bibitem[Chi68]{Chihara-Brenke-1}
T.~S. Chihara, \emph{Orthogonal polynomials with {B}renke type generating
  functions}, Duke Math. J. \textbf{35} (1968), 505--517. \MR{MR0227488 (37
  \#3072)}

\bibitem[Fla80]{Flajolet}
P.~Flajolet, \emph{Combinatorial aspects of continued fractions}, Discrete
  Math. \textbf{32} (1980), no.~2, 125--161. \MR{MR592851 (82f:05002a)}

\bibitem[Fra03]{Franz-Unification}
Uwe Franz, \emph{Unification of {B}oolean, monotone, anti-monotone, and tensor
  independence and {L}\'evy processes}, Math. Z. \textbf{243} (2003), no.~4,
  779--816. \MR{MR1974583 (2004f:46077)}

\bibitem[GS86]{Giraitis-Surgailis}
L.~Giraitis and D.~Surgailis, \emph{Multivariate {A}ppell polynomials and the
  central limit theorem}, Dependence in probability and statistics
  (Oberwolfach, 1985), Progr. Probab. Statist., vol.~11, Birkh{\"a}user Boston,
  Boston, MA, 1986, pp.~21--71. \MR{89c:60024}

\bibitem[GSS92]{GloSchurSpe}
Peter Glockner, Michael Sch{\"u}rmann, and Roland Speicher, \emph{Realization
  of free white noises}, Arch. Math. (Basel) \textbf{58} (1992), no.~4,
  407--416. \MR{93e:46075}

\bibitem[KS05]{Kyprianou-Novikov-Shiryaev}
Andreas~E. Kyprianou and Budhi~A. Surya, \emph{On the {N}ovikov-{S}hiryaev
  optimal stopping problems in continuous time}, Electron. Comm. Probab.
  \textbf{10} (2005), 146--154 (electronic). \MR{MR2162814 (2006i:60051)}


\bibitem[KY04]{Krystek-Yoshida-t}
Anna Krystek and Hiroaki Yoshida, \emph{Generalized {$t$}-transformations of probability measures and
  deformed convolutions}, Probab. Math. Statist. \textbf{24} (2004), no.~1,
  Acta Univ. Wratislav. No. 2646, 97--119. \MR{MR2108159 (2006i:46093)}

\bibitem[Leh03]{Lehner-Cumulants-lattice}
Franz Lehner, \emph{Cumulants, lattice paths, and orthogonal polynomials},
  Discrete Math. \textbf{270} (2003), no.~1-3, 177--191. \MR{MR1997896
  (2005a:05224)}

\bibitem[Len05]{Lenczewski-Noncommutative-independence}
Romuald Lenczewski, \emph{On noncommutative independence}, Quantum probability
  and infinite dimensional analysis, QP--PQ: Quantum Probab. White Noise Anal.,
  vol.~18, World Sci. Publ., Hackensack, NJ, 2005, pp.~320--336. \MR{MR2212459
  (2007i:46064)}

\bibitem[LL60]{Laha-Lukacs}
R.~G. Laha and E.~Lukacs, \emph{On a problem connected with quadratic
  regression}, Biometrika \textbf{47} (1960), 335--343. \MR{MR0121922 (22
  \#12649)}

\bibitem[Lyt03]{Lytvynov-Meixner}
Eugene Lytvynov, \emph{Polynomials of {M}eixner's type in infinite
  dimensions---{J}acobi fields and orthogonality measures}, J. Funct. Anal.
  \textbf{200} (2003), no.~1, 118--149. \MR{MR1974091 (2004k:46063)}

\bibitem[Mei34]{Meixner}
J.~Meixner, \emph{Orthogonale polynomsysteme mit einer besonderen gestalt der
  erzeugenden funktion}, J. London Math. Soc. \textbf{9} (1934), 6--13.

\bibitem[M{\l}o04]{Mlotkowki-Lambda-Boolean}
Wojciech M{\l}otkowski, \emph{Limit theorems in {$\Lambda$}-{B}oolean
  probability}, Infin. Dimens. Anal. Quantum Probab. Relat. Top. \textbf{7}
  (2004), no.~3, 449--459. \MR{MR2085643 (2005e:46127)}

\bibitem[Mur03]{Muraki-Natural-products}
Naofumi Muraki, \emph{The five independences as natural products}, Infin.
  Dimens. Anal. Quantum Probab. Relat. Top. \textbf{6} (2003), no.~3, 337--371.
  \MR{MR2016316 (2005h:46093)}

\bibitem[NS06]{Nica-Speicher-book}
Alexandru Nica and Roland Speicher, \emph{Lectures on the combinatorics of free
  probability}, London Mathematical Society Lecture Note Series, vol. 335,
  Cambridge University Press, Cambridge, 2006. \MR{MR2266879}

\bibitem[Ora02]{Oravecz-Fermi}
Ferenc Oravecz, \emph{Fermi convolution}, Infin. Dimens. Anal. Quantum Probab.
  Relat. Top. \textbf{5} (2002), no.~2, 235--242. \MR{MR1914835 (2003e:46112)}


\bibitem[Pri01]{Privault-Boolean}
Nicolas Privault, \emph{Quantum stochastic calculus for the uniform measure and
  {B}oolean convolution}, S\'eminaire de Probabilit\'es, XXXV, Lecture Notes in
  Math., vol. 1755, Springer, Berlin, 2001, pp.~28--47. \MR{MR1837275
  (2002i:81146)}

\bibitem[Rom84]{Roman-Book}
Steven Roman, \emph{The umbral calculus}, Pure and Applied Mathematics, vol.
  111, Academic Press Inc. [Harcourt Brace Jovanovich Publishers], New York,
  1984. \MR{MR741185 (87c:05015)}

\bibitem[Rot75]{RotaFiniteCalculusBook}
Gian-Carlo Rota (ed.), \emph{Finite operator calculus}, Academic Press Inc.
  [Harcourt Brace Jovanovich Publishers], New York, 1975, With the
  collaboration of P. Doubilet, C. Greene, D. Kahaner, A. Odlyzko and R.
  Stanley. \MR{52 \#119}

\bibitem[Sch91]{SchurCondPos}
Michael Sch{\"u}rmann, \emph{Quantum stochastic processes with independent
  additive increments}, J. Multivariate Anal. \textbf{38} (1991), no.~1,
  15--35. \MR{92k:46113}

\bibitem[Sch00]{SchOrthogonal}
Wim Schoutens, \emph{Stochastic processes and orthogonal polynomials},
  Springer-Verlag, New York, 2000. \MR{2001f:60095}

\bibitem[{\'S}ni00]{Sniady-SWN}
Piotr {\'S}niady, \emph{Quadratic bosonic and free white noises}, Comm. Math.
  Phys. \textbf{211} (2000), no.~3, 615--628. \MR{MR1773810 (2001i:81148)}

\bibitem[Spe97]{SpeUniv}
Roland Speicher, \emph{On universal products}, Free probability theory
  (Waterloo, ON, 1995), Fields Inst. Commun., vol.~12, Amer. Math. Soc.,
  Providence, RI, 1997, pp.~257--266. \MR{MR1426844 (98c:46141)}

\bibitem[Sto05]{Stoica-Boolean}
George Stoica, \emph{Limit laws for normed and weighted {B}oolean
  convolutions}, J. Math. Anal. Appl. \textbf{309} (2005), no.~1, 369--374.
  \MR{MR2154049 (2006f:46060)}

\bibitem[SW97]{SW97}
Roland Speicher and Reza Woroudi, \emph{Boolean convolution}, Free probability
  theory (Waterloo, ON, 1995), Fields Inst. Commun., vol.~12, Amer. Math. Soc.,
  Providence, RI, 1997, pp.~267--279. \MR{MR1426845 (98b:46084)}

\bibitem[Tho45]{Thorne-Appell-sets}
C.~J. Thorne, \emph{A property of {A}ppell sets}, Amer. Math. Monthly
  \textbf{52} (1945), 191--193. \MR{MR0011753 (6,212b)}

\bibitem[Voi98]{Voi-Entropy5}
Dan Voiculescu, \emph{The analogues of entropy and of {F}isher's information
  measure in free probability theory. {V}. {N}oncommutative {H}ilbert
  transforms}, Invent. Math. \textbf{132} (1998), no.~1, 189--227.
  \MR{99d:46087}

\bibitem[Voi00]{VoiCoalgebra}
\bysame, \emph{The coalgebra of the free difference quotient and free
  probability}, Internat. Math. Res. Notices (2000), no.~2, 79--106.
  \MR{2001d:46096}

\bibitem[VS93]{Verde-Star}
Luis Verde-Star, \emph{Polynomial sequences of interpolatory type}, Stud. Appl.
  Math. \textbf{88} (1993), no.~3, 153--172. \MR{MR1204869 (94d:41008)}

\bibitem[vW73]{vWa73}
Wilhelm von Waldenfels, \emph{An approach to the theory of pressure broadening
  of spectral lines}, Probability and information theory, II, Springer, Berlin,
  1973, pp.~19--69. Lecture Notes in Math., Vol. 296.

\end{thebibliography}

\providecommand{\bysame}{\leavevmode\hbox to3em{\hrulefill}\thinspace}
\providecommand{\MR}{\relax\ifhmode\unskip\space\fi MR }
\providecommand{\MRhref}[2]{%
  \href{http://www.ams.org/mathscinet-getitem?mr=#1}{#2}
}
\providecommand{\href}[2]{#2}

\end{document}